\newcommand{\globalcolor}[1]{%
  \color{#1}\global\let\default@color\current@color
}
\newif\ifdark
\definecolor{darkred}{rgb}{0.9,0.2,0.2}
\definecolor{darkblue}{rgb}{0.7,0.3,1}
\definecolor{darkgreen}{rgb}{0.1,0.9,0.1}
\definecolor{franck}{rgb}{0,0.8,1}
\definecolor{pagebackground}{rgb}{.15,.21,.18}
\definecolor{pageforeground}{rgb}{.84,.84,.85}
\definecolor{symbols}{rgb}{0,0.7,1}
\colorlet{connection}{red!80!black}
\colorlet{boxcolor}{blue!50}
\definecolor{darkred}{rgb}{0.7,0.1,0.1}
\definecolor{darkblue}{rgb}{0.4,0.1,0.8}
\definecolor{darkgreen}{rgb}{0.1,0.7,0.1}
\definecolor{franck}{rgb}{0,0,1}
\definecolor{pagebackground}{rgb}{1,1,1}
\definecolor{pageforeground}{rgb}{0,0,0}
\colorlet{symbols}{blue!90!black}
\colorlet{connection}{red!30!black}
\colorlet{boxcolor}{blue!50!black}
\def\slash{\leavevmode\unskip\kern0.18em/\penalty\exhyphenpenalty\kern0.18em}
\def\dash{\leavevmode\unskip\kern0.18em--\penalty\exhyphenpenalty\kern0.18em}
\DeclarePairedDelimiter\floor{\lfloor}{\rfloor}
\DeclareMathAlphabet{\mathbbm}{U}{bbm}{m}{n}
\DeclareFontFamily{U}{BOONDOX-calo}{\skewchar\font=45 }
\DeclareFontShape{U}{BOONDOX-calo}{m}{n}{
  <-> s*[1.05] BOONDOX-r-calo}{}
\DeclareFontShape{U}{BOONDOX-calo}{b}{n}{
  <-> s*[1.05] BOONDOX-b-calo}{}
\DeclareMathAlphabet{\mcb}{U}{BOONDOX-calo}{m}{n}
\SetMathAlphabet{\mcb}{bold}{U}{BOONDOX-calo}{b}{n}
\setlist{noitemsep,topsep=4pt,leftmargin=1.5em}
\DeclareMathAlphabet{\mathbbm}{U}{bbm}{m}{n}
\DeclareMathAlphabet{\mcb}{U}{BOONDOX-calo}{m}{n}
\SetMathAlphabet{\mcb}{bold}{U}{BOONDOX-calo}{b}{n}
\DeclareFontFamily{U}{mathx}{\hyphenchar\font45}
\DeclareFontShape{U}{mathx}{m}{n}{
      <5> <6> <7> <8> <9> <10>
      <10.95> <12> <14.4> <17.28> <20.74> <24.88>
      mathx10
      }{}
\DeclareSymbolFont{mathx}{U}{mathx}{m}{n}
\DeclareMathSymbol{\bigtimes}{1}{mathx}{"91}
\providecommand{\figures}{false}
{ \ifthenelse{\equal{\figures}{false}} {#1}{\[ {\rm Figure \ missing !} \]} }{}
\def\id{\mathrm{id}}
\def\CH{\mathcal{H}}
\def\CT{\mathcal{T}}
\tikzstyle{tinydots}=[dash pattern=on \pgflinewidth off \pgflinewidth]
\tikzstyle{superdense}=[dash pattern=on 4pt off 1pt]
\def\${|\!|\!|}
\newenvironment{DIFnomarkup}{}{} 
\newtheorem{example}[lemma]{Example}
\newfont{\indic}{bbmss12}
\def\Nabla_#1{\nabla_{\!#1}}
    \pgfmathsetlength{\pgf@xb}{\pgfkeysvalueof{/pgf/outer xsep}}%
    \pgfmathsetlength{\pgf@yb}{\pgfkeysvalueof{/pgf/outer ysep}}%
\def\symbol#1{\textcolor{symbols}{#1}}
\def\decorate#1#2{
        \ifnum#2>0
    		\foreach \count in {1,...,#2}{
	       	let
				\p1 = (sourcenode.center),
                \p2 = (sourcenode.east),
				\n1 = {\x2-\x1},
				\n2 = {1mm},
				\n3 = {(1.3+0.6*(\count-1))*\n1},
				\n4 = {0.7*\n1}
			in 
        		node[rectangle,fill=symbols,rotate=30,inner sep=0pt,minimum width=0.2*\n2,minimum height=\n2] at ($(sourcenode.center) + (\n3,\n4)$) {}
				}
		\fi
        \ifnum#1>0
    		\foreach \count in {1,...,#1}{
	       	let
				\p1 = (sourcenode.center),
                \p2 = (sourcenode.east),
				\n1 = {\x2-\x1},
				\n2 = {1mm},
				\n3 = {(1.3+0.6*(\count-1))*\n1},
				\n4 = {0.7*\n1}
			in 
        		node[rectangle,fill=symbols,rotate=-30,inner sep=0pt,minimum width=0.2*\n2,minimum height=\n2] at ($(sourcenode.center) + (-\n3,\n4)$) {}
				}
		\fi
}
\tikzset{
    dectriangle/.style 2 args={
        triangle,
        alias=sourcenode,
        append after command={\decorate{#1}{#2}}
    },
    dectriangle/.default={0}{0},
}
\tikzset{
	cross/.style={path picture={ 
  		\draw[symbols]
			(path picture bounding box.south east) -- (path picture bounding box.north west) (path picture bounding box.south west) -- (path picture bounding box.north east);
		}},
root/.style={circle,fill=green!50!black,inner sep=0pt, minimum size=1.2mm},
        dot/.style={circle,fill=pageforeground,inner sep=0pt, minimum size=1mm},
        blank/.style={circle,fill=white,inner sep=0pt, minimum size=1mm},
        dotred/.style={circle,fill=pageforeground!50!pagebackground,inner sep=0pt, minimum size=2mm},
        var/.style={circle,fill=pageforeground!10!pagebackground,draw=pageforeground,inner sep=0pt, minimum size=3mm},
        kernel/.style={semithick,shorten >=2pt,shorten <=2pt},
        kernels/.style={snake=zigzag,shorten >=2pt,shorten <=2pt,segment amplitude=1pt,segment length=4pt,line before snake=2pt,line after snake=5pt,},
        rho/.style={densely dashed,semithick,shorten >=2pt,shorten <=2pt},
           testfcn/.style={dotted,semithick,shorten >=2pt,shorten <=2pt},
        renorm/.style={shape=circle,fill=pagebackground,inner sep=1pt},
        labl/.style={shape=rectangle,fill=pagebackground,inner sep=1pt},
        xic/.style={very thin,circle,draw=symbols,fill=symbols,inner sep=0pt,minimum size=1.2mm},
        g/.style={very thin,rectangle,draw=symbols,fill=symbols!10!pagebackground,inner sep=0pt,minimum width=2.5mm,minimum height=1.2mm},
        xi/.style={very thin,circle,draw=symbols,fill=symbols!10!pagebackground,inner sep=0pt,minimum size=1.2mm},
	xies/.style={very thin,rectangle,fill=green!50!black!25,draw=symbols,inner sep=0pt,minimum size=1.1mm},
	xiesf/.style={very thin,rectangle,fill=green!50!black,draw=symbols,inner sep=0pt,minimum size=1.1mm},
        xix/.style={very thin,crosscircle,fill=symbols!10!pagebackground,draw=symbols,inner sep=0pt,minimum size=1.2mm},
        X/.style={very thin,cross,rectangle,fill=pagebackground,draw=symbols,inner sep=0pt,minimum size=1.2mm},
	xib/.style={thin,circle,fill=symbols!10!pagebackground,draw=symbols,inner sep=0pt,minimum size=1.6mm},
	xie/.style={thin,circle,fill=green!50!black,draw=symbols,inner sep=0pt,minimum size=1.6mm},
	xid/.style={thin,circle,fill=symbols,draw=symbols,inner sep=0pt,minimum size=1.6mm},
	xibx/.style={thin,crosscircle,fill=symbols!10!pagebackground,draw=symbols,inner sep=0pt,minimum size=1.6mm},
	kernel1/.style={thick},
	kernels2/.style={very thick,draw=connection,segment length=12pt},
	keps/.style={thin,draw=symbols,->},
	kepspr/.style={thick,draw=connection,->},
	krho/.style={thin,draw=symbols,superdense,->},
	krhopr/.style={thick,draw=connection,superdense,->},
	triangle/.style = { regular polygon, regular polygon sides=3},
	not/.style={thin,circle,draw=connection,fill=connection,inner sep=0pt,minimum size=0.5mm},
	diff/.style = {very thin,draw=symbols,triangle,fill=red!50!black,inner sep=0pt,minimum size=1.6mm},
	diff1/.style = {very thin,dectriangle={1}{0},fill=red!50!black,draw=symbols,inner sep=0pt,minimum size=1.6mm},
	diff2/.style = {very thin,dectriangle={1}{1},fill=red!50!black,draw=symbols,inner sep=0pt,minimum size=1.6mm},
		diffmini/.style = {very thin,rectangle,fill=black,draw=black,inner sep=0pt,minimum size=0.75mm},
	 kernelsmod/.style={very thick,draw=connection,segment length=12pt},
	 rec/.style = {very thin,rectangle,fill=black,draw=black,inner sep=0pt,minimum size=2mm},
	cerc/.style={very thin,circle,draw=black,fill=symbols,inner sep=0pt,minimum size=2mm},
	stars/.style={very thin,star,star points=6,star point ratio=0.5, draw=black,fill=red,inner sep=0pt,minimum size=0.7mm},
	>=stealth,
        }
\def\DeclareSymbol#1#2#3{%
	\expandafter\gdef\csname MH@symb@#1\endcsname{\tikzsetnextfilename{symbol#1}%
	\tikz[baseline=#2,scale=0.15,draw=symbols,line join=round]{#3}}%
	\expandafter\gdef\csname MH@symb@#1s\endcsname{\scalebox{0.75}{\tikzsetnextfilename{symbol#1}%
	\tikz[baseline=#2,scale=0.15,draw=symbols,line join=round]{#3}}}%
	\expandafter\gdef\csname MH@symb@#1ss\endcsname{\scalebox{0.65}{\tikzsetnextfilename{symbol#1}%
	\tikz[baseline=#2,scale=0.15,draw=symbols,line join=round]{#3}}}%
	}
\def\<#1>{\ifthenelse{\boolean{mmode}}{\mathchoice{\csname MH@symb@#1\endcsname}{\csname MH@symb@#1\endcsname}{\csname MH@symb@#1s\endcsname}{\csname MH@symb@#1ss\endcsname}}{\csname MH@symb@#1\endcsname}}
 \def\1{\mathbf{\symbol{1}}}
\def\one{\mathbf{1}}
\DeclareMathAlphabet{\mathpzc}{OT1}{pzc}{m}{it}
\def\eqref#1{(\ref{#1})}
\newcommand*{\bigcdot}{}
\DeclareRobustCommand*{\bigcdot}{%
  \mathbin{\mathpalette\bigcdot@{}}%
}
\newcommand*{\bigcdot@scalefactor}{.5}
\newcommand*{\bigcdot@widthfactor}{1.15}
\newcommand*{\bigcdot@}[2]{%
  \sbox0{$#1\vcenter{}$}
  \sbox2{$#1\cdot\m@th$}%
  \hbox to \bigcdot@widthfactor\wd2{%
    \hfil
    \raise\ht0\hbox{%
      \scalebox{\bigcdot@scalefactor}{%
        \lower\ht0\hbox{$#1\bullet\m@th$}%
      }%
    }%
    \hfil
  }%
}
\def\two{{\<generic>\kern0.05em\<genericb>}}
\def\twoI{{\<Ito>\kern0.05em\<Itob>}}
\def\mail#1{\burlalt{#1}{mailto:#1}}
\title{Ramification of Volterra-type Rough Paths}
\author{Yvain Bruned$^1$, Foivos Katsetsiadis$^2$}
\institute{ 
 IECL (UMR 7502), Université de Lorraine
 \and University of Edinburgh \\
Email:\ \begin{minipage}[t]{\linewidth}
\mail{yvain.bruned@univ-lorraine.fr}
\\ \mail{F.I.Katsetsiadis@sms.ed.ac.uk}
\end{minipage}}
\begin{document}

\maketitle

\begin{abstract}

We extend the new approach introduced in \cite{HT} and \cite{HT2} for dealing with stochastic Volterra equations using the ideas of Rough Path theory and prove global existence and uniqueness results. The main idea of this approach is simple: Instead of the iterated integrals of a path comprising the data necessary to solve any equation driven by that path, now iterated integral convolutions with the Volterra kernel comprise said data. This leads to the corresponding abstract objects called Volterra-type Rough Paths, as well as the notion of the convolution product, an extension of the natural tensor product used in Rough Path Theory.

\end{abstract}

\tableofcontents

\section{Introduction}

\ \ \ \ Volterra Equations comprise a thoroughly studied class of differential equations with wide applicability in physics, engineering and other sciences, capable of adequately capturing the behavior of a wide range of natural models. Introducing stochasticity to these models via a random driving noise naturally yields the corresponding notion of Stochastic Volterra Equations. These equations are typically of the form:
\begin{equs}\label{Equation}
   u(t)= u_{0} + \sum_{i=0}^d \int_{0}^{t} k(t,r) f_i(u_r) dq^{i}_r \ , \quad u_0 \in \R^{e}
\end{equs}
where $k$ is a kernel that obeys the analytic condition \ref{condition_k} and is allowed to be singular in the diagonal $t=r$, the $f_i$ are sufficiently regular vector fields on $ \R^{e} $ and $q \in C^{\alpha}([0,T]; \R^{d+1})$ a.s. is a stochastic process on $\R^{d+1}$ with $q^{0}_r = r$ that is Hölder regular of some degree $\alpha \in (0,1]$ almost surely. Such equations are of independent theoretical interest, but also arise in the description of the dynamics of various natural systems and are becoming increasingly popular with the recent advent of so called Rough Volatility Models in Mathematical Finance after the paper \cite{GJR}, where it is empirically demonstrated that volatility is best described by a rough process. We refer the reader to Rosenbaum and coworkers in \cite{EFR}, \cite{ER16}, \cite{ER17}, who, based on stylized facts of modern market microstructure, construct a sequence of Hawkes processes suitably rescaled in time and space that converges in law to a rough volatility model of rough Heston form with the variance process of the underlying asset obeying a stochastic Volterra Equation. Therefore, there is ample interest in developing a robust solution theory to these equations.

  In cases where the noise is a Brownian Motion or a continuous semimartingale process, classical interpretations of these equations, such as the Ito interpretation by means of the classical stochastic integral are very much sufficient. Indeed, this approach has been undertaken in \cite{OZ93} and \cite{Zha10} where these equations are treated via the means of classical stochastic calculus. A mathematical challenge is presented when the driving noise is of wilder nature, yielding rougher realizations as its sample paths and/or appears without the semimartingale property, necessitating the search for finer interpretations of such equations. This challenge is not contained in the context of Volterra Equations, but can be seen as pertaining to a wide class of stochastic PDEs. Various tools have been introduced to tackle this problem and naturally, these tools have also been applied to yield appropriate solution theories to \ref{Equation}. We shall give a brief overview of these developments that span the last three decades.

 In 1998, T. Lyons introduces the Rough Path calculus in the seminal work \cite{Lyons98}. Lyons treats the problem by enhancing the process into an object known as a Rough Path, thereby introducing the higher-order calculi that allow one to obtain a "rough" formulation of the given equations. In 2004, Gubinelli introduces the concept of a controlled Rough Path in \cite{Gubinelli2004} and later on that of a Branched Rough Path in his work \cite{Gub06}. Advancements rapidly follow. In 2011, in one of the most impressive applications of the theory Hairer uses Rough Paths in \cite{KPZ} to provide a solution theory to the KPZ equation, a landmark achievement in the field of singular stochastic PDEs. Shortly after, Gubinelli, Imkeller and Perkowski, in \cite{GIP15} also introduce techniques of paracontrolled calculus in the study of singular SPDEs. Advancements culminate around the same time, when, in a major synthesis of ideas in \cite{reg}, Hairer develops the theory of regularity structures, being one of the first to introduce renormalization techniques in the field. This, together with the subsequent \cite{BHZ,CH16,BCCH}, form a body of work that covers a large class of parabolic stochastic PDEs. Finally, as an addition to the powerful arsenal developed through the theory of regularity structures, higher order paracontrolled calculus is also formulated in \cite{BB19}, generalising the approach introduced in \cite{GIP15}.
 
 The study of stochastic Volterra Equations follows these developments. In \cite{DT09} and \cite{DT11} Deya and Tindel use ideas of Rough Path theory for the treatment of non-singular Volterra equations. Furthermore, in their recent work \cite{PT18} Pr\"omel and Trabs treat the first order case by use of paracontrolled calculus. 

 As mentioned above, Regularity Structures have proven to be one of the most potent tools for assigning a well-posed interpretation to a large class of stochastic PDEs. Indeed, a more powerful approach introduced in \cite{BFGJS} and elaborated upon in \cite{BFG20} has been to interpret stochastic Volterra Equations via means of the theory of Regularity Structures; this has been done down to Hölder exponent $1/3$ when the driving noise is a fractional Brownian motion, with the expectation that methods therein are amenable to generalization in the case of arbitrarily low exponent. One should then be able to obtain existence and uniqueness results that are global (see for instance the first paragraph in Appendix C of \cite{BFGJS} for global existence).

 In this work, we are interested in extending the approach introduced lately in \cite{HT} and \cite{HT2}, which is to generalize the ideas of Rough Path theory in order to treat the case of stochastic Volterra Equations. The idea is that instead of the iterated integrals of a path one now keeps track of iterated integral convolutions with the Volterra kernel. An object encoding this information is called a Volterra-type Rough Path, or simply a Volterra Path. These objects are a generalization of Branched Rough Paths and satisfy a generalization of Chen's relation. In order to formulate this generalized relation a convolution product operation is introduced, serving as an extension of the natural tensor product used in Rough Path Theory.

 In their works \cite{HT} and \cite{HT2} the authors only treat the cases of Hölder regularity that is higher than  $1/3$ and $1/4$ respectively. Furthermore, the Hopf-algebraic framework necessary for the description of the objects has not yet been clarified by the authors. In particular, while use of the techniques of Branched Rough Paths is attempted, the classical Connes-Kreimer Hopf algebra is not a suitable choice in the context of the results. In this work, we introduce a general framework and extend these results in the case of arbitrarily low Hölder exponent. In the process, we provide an application of a Hopf-algebraic structure that yields a robust and general description of the objects at hand. The idea for this structure is based on a plugging coproduct used in \cite{deformation} for recovering the algebraic structures of \cite{BHZ}. It can be understood as a Connes-Kreimer type coproduct where one keeps along the edges that would normally be lost when performing an admissible cut. Pinpointing certain properties of convolutional integrals in a sufficiently regular setting as well as drawing from the ideas of Branched Rough Paths together with the more suitable framework provided by this algebraic structure, we are able to formulate the theory on any order and to extend the main results obtained in \cite{HT2}.

 We will now proceed to give a brief outline for the contents of the paper. In the next section, we begin by setting up the analytic and algebraic framework of interest by describing the analytic conditions imposed on our kernels and the ambient spaces for our objects, as well as introducing the Hopf algebraic structure that will be used for the description of the objects involved in our rough formulation. We also prove some motivating propositions that hold true when one is dealing with sufficiently regular functions (see Propositions \ref{propchen} and \ref{associativity_smooth}). Then, we move to the main objective of this section which is to introduce a convolution-type operation on our spaces of interest, which we call a convolution product and shall denote by $ \star$. This is done after recalling some results that were proven in \cite{HT} and \cite{HT2}, at which point we proceed to prove the main result of the section, which is Theorem \ref{Convolution Product}. The statement of the theorem can be seen as giving the definition for the convolution product operation.

 This operation is vital for our description of the new objects introduced, which we shall call Branched Rough Paths of Volterra type or simply Volterra Paths. These objects, while resembling the Branched Rough Paths introduced by Gubinelli in \cite{Gub06}, nevertheless serve as to model rough versions of convolutional integral expressions that in general cannot be operated upon by mere tensorization and be expected to yield another "integral" of the same form. We will therefore need an operation that will serve as a sort of "integral convolution" for our objects. The $\star$ operation introduced fulfills this role.
 
 In the third section, we begin by giving the proper definition for a controlled Rough Path of Volterra type in Definition~\ref{CVRP} and show how to integrate these objects against a given Volterra Path in Theorem~\ref{Rough Integration}. We also prove a version of the "chain rule" for our Rough Path calculus (see Proposition~\ref{Chain rule}), showing how to lift the composition of a function $y$ controlled by a given Volterra Path with a sufficiently regular function $f$ into the space of controlled paths. This will allow us to finally formulate and prove our main result, which is Theorem \ref{Existence and Uniqueness} on the existence and uniqueness of solutions to Rough Volterra Equations. 
 
 \subsection*{Acknowledgements}
 
 {\small
 The authors are very grateful to the referees for their careful reading of the manuscript.
 We are particularly indebted to the referee who aks clarifications on the definitions and the proofs concerning the convolution product. This leads to substantial improvements in the clarity of the
 exposition.
 }

\section{The convolution product}

\ \ We begin this section by presenting the convolution product operation proposed in \cite{HT} at the level of iterated Volterra integrals and proceed to describe how it can be generalized to ramified integral expressions with tree-indexed iterations. We will use this in the next section to define the concept of a Volterra Rough Path which is an abstraction of the collection of ramified Volterra integrals corresponidng to a path: it will be comprised of a sequence of functions that is \emph{stipulated} to satisfy a convolutional variant of Chen's relation. To accomplish this, we will face the challenge of defining this convolution product as an operation on the spaces these function terms reside in. This challenge has been tackled in \cite{HT} for Hölder exponents $\rho > 1/3$. We proceed to construct a theoretical framework that is more general and works for arbitrarily low Hölder exponent. Letting $ T > 0 $, we will denote by $\Delta_{n}([0,T])$ the subset $\{ 0 < t_1 < \cdots < t_n < T \}$ of $\R^{n}$ and by $ Q_n([0,T]) $ the $n$-th dimensional hypercube $\{ 0 < t_i < T, \, i \in \lbrace 1,...,n \rbrace \} \subset \R^{n}$. We will usually omit the interval $[0,T]$ and use $\Delta_n$ and $Q_n$ respectively.

Let us recall the analytic condition $(\textbf{H})$ imposed on the kernel $k$ in \cite{HT}. The assumption is that $k : \Delta_2 \rightarrow \R$ is such that there exists $ \gamma \in (0,1) $ so that for all
$ (s,r,q,\tau) \in \Delta_4 $ and $ \eta, \beta \in [0,1] $, we have:
\begin{equs} \label{condition_k}\begin{aligned}
| k(\tau,r) | & \lesssim |\tau-r|^{-\gamma}
\\ 
|  k(\tau,r) - k(q,r) | & \lesssim |q-r|^{-\gamma -\eta} | \tau - q|^{\eta}
\\ 
|  k(\tau,r) - k(\tau,s) | & \lesssim |\tau-r|^{-\gamma -\eta} | r - s|^{\eta}
\\ 
|  k(\tau,r) - k(q,r) -  k(\tau,s) + k(q,s) | & \lesssim |q-r|^{-\gamma -\beta} | r - s|^{\beta}
\\ 
|  k(\tau,r) - k(q,r) -  k(\tau,s) + k(q,s) | & \lesssim |q-r|^{-\gamma -\eta} | r - q|^{\eta}.
\end{aligned}
\end{equs}
The kernel $k$ is the basic block for the construction of iterated convolutional integrals. To represent these, we will need to introduce the appropriate algebraic structure. We first introduce a natural space of decorated rooted trees. 
 Let $ \hat \CT$ (resp. $ \CT $) be the set of rooted trees with nodes decorated by $\{0,...,d\}$ (resp. decorated by $\{0,...,d\}$ except for the root which carries no decoration). We grade elements $\tau \in \hat \CT$ (resp. $  \CT$)  by the number $|\tau|$ of their nodes having a decoration and we set
\[
\CT_n:=\{\tau \in \CT: |\tau| \leq n\},
\qquad n\in\N.
\]
(and resp. for $\hat \CT$). We denote by $\hat \CF$ (resp. $\CF$) the set of forests, i.e. sets consisting of trees in $\hat \CT $ (resp. $\CT$). For any $h \in \CF$ or $h \in \hat \CF$ we shall denote by $E_{h}$ the set of its edges, by $N_{h}$ the set of its nodes and by $L_{h}$ the set of its leaves.

 Any rooted tree $\tau \in \hat \CT$, different from the empty tree $\one$, can be written in terms of the $B^{i}_+$-operators, $ i \in  \{0,...,d\}$ which connect the roots of the trees in a forest $\tau_1 \cdots \tau_n \in \hat \CF$ to a new root decorated by $i$. Indeed, given any $\tau \in \hat \CT$ other than $\one$, we have that $\tau = B^{i}_+(\tau_1 \cdots \tau_n)$ for some $\tau_{1}, ..., \tau_{n} \in \hat \CF$.
 Elements of $ \CT $ are described similarly except that we use the operator $B_+$ since their roots carry no decoration. We introduce another operator on the linear span of decorated trees denoted by $\CI_{i} $ which acts as follows: for $\tau  \in \CT$, the tree $ \CI_{i}(\tau) $ is given by grafting the root of $\tau $ onto a new root with no decoration and then decorating with $i$ the node of the new tree corresponding to the root of $\tau $. Below, we ilustrate the various operations:
 \begin{equs}
B_+( \begin{tikzpicture}[scale=0.2,baseline=0.1cm]
        \node at (0,1)  [dot,label= {[label distance=-0.2em]below: \scriptsize $ i $  } ] (root) {};
     \end{tikzpicture}  \begin{tikzpicture}[scale=0.2,baseline=0.1cm]
        \node at (0,1)  [dot,label= {[label distance=-0.2em]below: \scriptsize $ j $  } ] (root) {};
     \end{tikzpicture} )   =  \begin{tikzpicture}[scale=0.2,baseline=0.1cm]
        \node at (0,0)  [dot,label= {[label distance=-0.2em]below: \scriptsize  } ] (root) {};
         \node at (-1,2)  [dot,label={[label distance=-0.2em]left: \scriptsize  $ i $} ] (left) {};
          \node at (1,2)  [dot,label={[label distance=-0.2em]right: \scriptsize  $ j $} ] (right) {};
          \draw[kernel1] (left) to
     node [sloped,below] {\small }     (root);
      \draw[kernel1] (right) to
     node [sloped,below] {\small }     (root);
     \end{tikzpicture}, \quad B^{k}_+( \begin{tikzpicture}[scale=0.2,baseline=0.1cm]
        \node at (0,1)  [dot,label= {[label distance=-0.2em]below: \scriptsize $ i $  } ] (root) {};
     \end{tikzpicture}  \begin{tikzpicture}[scale=0.2,baseline=0.1cm]
        \node at (0,1)  [dot,label= {[label distance=-0.2em]below: \scriptsize $ j $  } ] (root) {};
     \end{tikzpicture} )   =  \begin{tikzpicture}[scale=0.2,baseline=0.1cm]
        \node at (0,0)  [dot,label= {[label distance=-0.2em]below: \scriptsize $ k $  } ] (root) {};
         \node at (-1,2)  [dot,label={[label distance=-0.2em]left: \scriptsize  $ i $} ] (left) {};
          \node at (1,2)  [dot,label={[label distance=-0.2em]right: \scriptsize  $ j $} ] (right) {};
          \draw[kernel1] (left) to
     node [sloped,below] {\small }     (root);
      \draw[kernel1] (right) to
     node [sloped,below] {\small }     (root);
     \end{tikzpicture},\quad \CI_k ( \begin{tikzpicture}[scale=0.2,baseline=0.1cm]
        \node at (0,0)  [dot,label= {[label distance=-0.2em]below: \scriptsize  } ] (root) {};
         \node at (-1,2)  [dot,label={[label distance=-0.2em]left: \scriptsize  $ i $} ] (left) {};
          \node at (1,2)  [dot,label={[label distance=-0.2em]right: \scriptsize  $ j $} ] (right) {};
          \draw[kernel1] (left) to
     node [sloped,below] {\small }     (root);
      \draw[kernel1] (right) to
     node [sloped,below] {\small }     (root);
     \end{tikzpicture}) = \begin{tikzpicture}[scale=0.2,baseline=0.1cm]
        \node at (0,0)  [dot,label= {[label distance=-0.2em]below: \scriptsize  } ] (root) {};
         \node at (0,2)  [dot,label={[label distance=-0.2em]left: \scriptsize  $ k $} ] (center) {};
          \node at (1,4)  [dot,label={[label distance=-0.2em]right: \scriptsize  $ j $} ] (left) {};
          \node at (-1,4)  [dot,label={[label distance=-0.2em]left: \scriptsize  $ i $} ] (right) {};
          \draw[kernel1] (center) to
     node [sloped,below] {\small }     (root);
      \draw[kernel1] (right) to
     node [sloped,below] {\small }     (center);
     \draw[kernel1] (left) to
     node [sloped,below] {\small }     (center);
     \end{tikzpicture}.
 \end{equs}
 Concerning the map $|\cdot |$ which counts the decorated nodes in a tree, we have for example:
 \begin{equs}
 |\begin{tikzpicture}[scale=0.2,baseline=0.1cm]
        \node at (0,0)  [dot,label= {[label distance=-0.2em]below: \scriptsize  } ] (root) {};
         \node at (0,2)  [dot,label={[label distance=-0.2em]left: \scriptsize  $ k $} ] (center) {};
          \node at (1,4)  [dot,label={[label distance=-0.2em]right: \scriptsize  $ j $} ] (left) {};
          \node at (-1,4)  [dot,label={[label distance=-0.2em]left: \scriptsize  $ i $} ] (right) {};
          \draw[kernel1] (center) to
     node [sloped,below] {\small }     (root);
      \draw[kernel1] (right) to
     node [sloped,below] {\small }     (center);
     \draw[kernel1] (left) to
     node [sloped,below] {\small }     (center);
     \end{tikzpicture} |  = |  \begin{tikzpicture}[scale=0.2,baseline=0.1cm]
        \node at (0,0)  [dot,label= {[label distance=-0.2em]below: \scriptsize $ k $  } ] (root) {};
         \node at (-1,2)  [dot,label={[label distance=-0.2em]left: \scriptsize  $ i $} ] (left) {};
          \node at (1,2)  [dot,label={[label distance=-0.2em]right: \scriptsize  $ j $} ] (right) {};
          \draw[kernel1] (left) to
     node [sloped,below] {\small }     (root);
      \draw[kernel1] (right) to
     node [sloped,below] {\small }     (root);
     \end{tikzpicture} |  =3.
 \end{equs}
 A decorated tree that is of the form $ \CI_k(\tau)$ for some $\tau  \in \CT$ is called a planted tree. The set of these trees is denoted by $ \mathcal{PT} $ and $ \mathcal{P}\mathcal{T}_{n} \subset \mathcal{PT} $ denotes the set of planted trees of size $n$.

\begin{remark} We can potentially consider a finite family of kernels $ (k_i)_{i \in I} $ satisfying condition $(\textbf{H})$. Then, our decorated trees should also come equipped with edge decorations indexed by $I$. This type of structure has been introduced in the context of Regularity Structures in \cite{BHZ}.
\end{remark}

We are now ready to give the precise definition of what we mean by the $h-th$ iterated convolutional integral of a path $q$, where $h$ is a given tree.

\begin{definition} \label{def_trees_integral}

Let $q$ be a path in $\mathcal{C}^{1}( [0,T]; \R^{d+1})$ and $k: \Delta_{2} \rightarrow \R$ a Volterra kernel satisfying the analytic bounds imposed in condition $(\textbf{H})$. Let $h \in \CT $ be a rooted tree with $n + 1$ vertices. Let $h^{\star} \in \hat \CF$ denote the forest one obtains after removing the root of $h$ and its adjacent edges. Then, using the convention that $r_{\rho} = \tau$, where $ \rho $ is the root of $ h $, we define the $h-th$ \emph{iterated Volterra integral} as a mapping $\mathbf{z}^{h}: \Delta_{3} \rightarrow {\R}$ given for $ \tau > t > s $ by 
\begin{equs}
\mathbf{z}^{h, \tau}_{ts} =  \int_{A_{ts}^h \subseteq \R^{n}} \prod_{ (i,j) \in E_h} k(r_{i}, r_{j}) \prod_{\ell \in N_{h^{\star}}} dq_{r_{\ell}}^{i_{\ell}}
\end{equs}
where $ i_{\ell} $ is the decoration attached to the node $ \ell $ and the domain of integration is the set
\begin{equs}
A^{h}_{ts} = \bigcap_{ (i,j) \in E_{h^{\star}}} \{ t > r_{i} > r_{j} > s \}
\end{equs}
i.e. the order relations defining the variable ranges are directly given by the partial ordering induced by the forest $h^{\star} \in \hat \CF$. 
Let $V \subset N_{h^{\star}}$ be of cardinality $m$. Then, we also define:
\begin{equs}
\bar{\textbf{z}}^{h, \tau}_{ts}((r_\ell)_{\ell \in V} ) = \prod_{w\in V} \dot{q}^{i_w}_{r_w}  \int_{A^{h}_{ts}((r_\ell)_{\ell \in V}) \subseteq \R^{n-m}} \prod_{ (i,j) \in E_h} k(r_{i}, r_{j}) \prod_{\ell \in N_{h^{\star}} \setminus V}d q^{i_{\ell}}_{r_{\ell}}
\end{equs}
where $ A^{h}_{ts}((r_\ell)_{\ell \in V}) $ correponds to $A^{h}_{ts}$ when one fixes the values of  $ (r_\ell)_{\ell \in V} $.

\end{definition}

\begin{example}[Linearly Iterated Volterra Integrals]

Let $h$ be the ladder tree with $n + 1$ vertices, with decorations $i_1,...,i_{n}$. Then, the \emph{iterated Volterra integral} of order $n$ is a mapping $\textbf{z}^{h}: \Delta_{3} \mapsto \R $ given by 
\begin{equs}
(s, t, \tau) \mapsto \textbf{z}^{h, \tau}_{ts} =  \int_{t>r_{n}>... >r_{1}>s}k( \tau, r_{n}) \prod_{j=1}^{n-1}k(r_{j+1}, r_{j})dq^{i_1}_{r_{1}}...dq^{i_n}_{r_{n}}.
\end{equs}
This expression corresponds to the case of Volterrra-type Rough Paths as they were originally introduced in \cite{HT}.
\end{example}
Before stating Proposition~\ref{propchen} which presents the convolution in the smooth case, we introduce a coproduct $\Delta$ on $\CH$, the linear span of $ \mathcal{F} $ and a coaction $\hat \Delta$ on $\hat \CH$, the linear span of $ \hat{\mathcal{F}} $. These are similar to the Butcher-Connes-Kreimer coproduct and their combinatorial nature will allow us to encode the various formal operations on the Taylor expansions that approximate our objects locally, such as formal integration. We define the plugging coproduct $ \Delta : \CH \rightarrow \CH \otimes \CH $ and the coaction $\hat \Delta : \hat \CH \rightarrow \hat \CH \otimes  \CH $ as follows:
\begin{equs}
\Delta h = \sum_{C \in \scriptsize{\text{Adm}}(h) }  R^C(h) \otimes \tilde{P}^C(h)
\end{equs} 
where $\text{Adm}(h)$ is the set of admissible cuts, which are defined as collections of edges
with the property that any path from the root to a leaf contains at most one edge of the collection. We will also write $\text{Adm}(h)'$ for the set of all admissible cuts of $h$ except the one where the entire tree is cut. We denote by $\tilde{P}^C(h)$ the pruned forest that is formed by collecting all the edges at or above the cut, including the ones upon which the cut was performed, so that the edges that were attached to the same node in $h$ are part of the same tree. The term $R^C(h)$  corresponds to the "trunk", that is the subforest formed by the edges not lying above the ones upon which the cut was performed. This coproduct differs from the classical Butcher-Connes-Kreimer coproduct introduced in \cite{Butcher,CK}. It can also be constructed via a plugging pre-Lie product, see \cite[Sec 3.3]{deformation}. 
We shall use Sweedler's notation for the coproduct of a forest $h$ and write:
\begin{equs} \label{Delta_12}
\Delta h = \sum_{(h)} h^{(1)} \otimes h^{(2)}.
\end{equs}

 \begin{remark}
One can provide an alternative recursive definition of $ \Delta $ and $ \hat{\Delta} $ as follows:
\begin{equs} \label{recursive_formula} \begin{aligned}
\Delta B_+(h_1 \cdot ... \cdot h_n) & = \sum_{I \subset \lbrace 1,..., n \rbrace} \left( B_+(\cdot) \otimes B_+(\prod_{i \in I} h_i) \right) \hat \Delta \prod_{ i \in \lbrace 1,...,n\rbrace \setminus I } h_i. \\
\hat \Delta B_+^{k}(h_1 \cdot ... \cdot h_n) & = \sum_{I \subset \lbrace 1,..., n \rbrace} \left( B_+^k(\cdot) \otimes B_+(\prod_{i \in I} h_i) \right) \hat \Delta \prod_{i \in  \lbrace 1,...,n\rbrace \setminus I } h_i.
\end{aligned}
\end{equs}
The proof of this identity is similar to the recursive formulation obtained for the Butcher-Connes-Kreimer coproduct.
\end{remark}

\begin{example} \label{coproduct_exmp} To illustrate with an example, we compute the result below for a given tree:

\begin{equs} \label{ex_plugging}
\begin{aligned}
\Delta \begin{tikzpicture}[scale=0.2,baseline=0.1cm]
        \node at (0,0)  [dot,label= {[label distance=-0.2em]below: \scriptsize  } ] (root) {};
         \node at (-1,2)  [dot,label={[label distance=-0.2em]left: \scriptsize  $ k $} ] (left) {};
          \node at (1,2)  [dot,label={[label distance=-0.2em]right: \scriptsize  $ \ell $} ] (right) {};
          \node at (0,4)  [dot,label={[label distance=-0.2em]right: \scriptsize  $ j $} ] (leftr) {};
          \node at (-2,4)  [dot,label={[label distance=-0.2em]left: \scriptsize  $ i $} ] (leftl) {};
          \draw[kernel1] (left) to
     node [sloped,below] {\small }     (root);
      \draw[kernel1] (right) to
     node [sloped,below] {\small }     (root);
     \draw[kernel1] (leftl) to
     node [sloped,below] {\small }     (left);
      \draw[kernel1] (leftr) to
     node [sloped,below] {\small }     (left);
     \end{tikzpicture} & = \begin{tikzpicture}[scale=0.2,baseline=0.1cm]
        \node at (0,0)  [dot,label= {[label distance=-0.2em]below: \scriptsize  } ] (root) {};
         \node at (-1,2)  [dot,label={[label distance=-0.2em]left: \scriptsize  $ k $} ] (left) {};
          \node at (1,2)  [dot,label={[label distance=-0.2em]right: \scriptsize  $ \ell $} ] (right) {};
          \node at (0,4)  [dot,label={[label distance=-0.2em]right: \scriptsize  $ j $} ] (leftr) {};
          \node at (-2,4)  [dot,label={[label distance=-0.2em]left: \scriptsize  $ i $} ] (leftl) {};
          \draw[kernel1] (left) to
     node [sloped,below] {\small }     (root);
      \draw[kernel1] (right) to
     node [sloped,below] {\small }     (root);
     \draw[kernel1] (leftl) to
     node [sloped,below] {\small }     (left);
      \draw[kernel1] (leftr) to
     node [sloped,below] {\small }     (left);
     \end{tikzpicture} \otimes \one + \one \otimes \begin{tikzpicture}[scale=0.2,baseline=0.1cm]
        \node at (0,0)  [dot,label= {[label distance=-0.2em]below: \scriptsize  } ] (root) {};
         \node at (-1,2)  [dot,label={[label distance=-0.2em]left: \scriptsize  $ k $} ] (left) {};
          \node at (1,2)  [dot,label={[label distance=-0.2em]right: \scriptsize  $ \ell $} ] (right) {};
          \node at (0,4)  [dot,label={[label distance=-0.2em]right: \scriptsize  $ j $} ] (leftr) {};
          \node at (-2,4)  [dot,label={[label distance=-0.2em]left: \scriptsize  $ i $} ] (leftl) {};
          \draw[kernel1] (left) to
     node [sloped,below] {\small }     (root);
      \draw[kernel1] (right) to
     node [sloped,below] {\small }     (root);
     \draw[kernel1] (leftl) to
     node [sloped,below] {\small }     (left);
      \draw[kernel1] (leftr) to
     node [sloped,below] {\small }     (left);
     \end{tikzpicture} + \begin{tikzpicture}[scale=0.2,baseline=0.1cm]
        \node at (0,0)  [dot,label= {[label distance=-0.2em]below: \scriptsize  } ] (root) {};
         \node at (0,2)  [dot,label={[label distance=-0.2em]left: \scriptsize  $ k $} ] (center) {};
          \node at (1,4)  [dot,label={[label distance=-0.2em]right: \scriptsize  $ j $} ] (left) {};
          \node at (-1,4)  [dot,label={[label distance=-0.2em]left: \scriptsize  $ i $} ] (right) {};
          \draw[kernel1] (center) to
     node [sloped,below] {\small }     (root);
      \draw[kernel1] (right) to
     node [sloped,below] {\small }     (center);
     \draw[kernel1] (left) to
     node [sloped,below] {\small }     (center);
     \end{tikzpicture}  \otimes \begin{tikzpicture}[scale=0.2,baseline=0.1cm]
        \node at (0,0)  [dot,label= {[label distance=-0.2em]below: \scriptsize  } ] (root) {};
         \node at (0,2)  [dot,label={[label distance=-0.2em]left: \scriptsize  $ \ell $} ] (center) {};
      \draw[kernel1] (root) to
     node [sloped,below] {\small }     (center);
     \end{tikzpicture} + \begin{tikzpicture}[scale=0.2,baseline=0.1cm]
        \node at (0,0)  [dot,label= {[label distance=-0.2em]below: \scriptsize  } ] (root) {};
         \node at (0,2)  [dot,label={[label distance=-0.2em]left: \scriptsize  $ \ell $} ] (center) {};
      \draw[kernel1] (root) to
     node [sloped,below] {\small }     (center);
     \end{tikzpicture}  \otimes \begin{tikzpicture}[scale=0.2,baseline=0.1cm]
        \node at (0,0)  [dot,label= {[label distance=-0.2em]below: \scriptsize  } ] (root) {};
         \node at (0,2)  [dot,label={[label distance=-0.2em]left: \scriptsize  $ k $} ] (center) {};
          \node at (1,4)  [dot,label={[label distance=-0.2em]right: \scriptsize  $ j $} ] (left) {};
          \node at (-1,4)  [dot,label={[label distance=-0.2em]left: \scriptsize  $ i $} ] (right) {};
          \draw[kernel1] (center) to
     node [sloped,below] {\small }     (root);
      \draw[kernel1] (right) to
     node [sloped,below] {\small }     (center);
     \draw[kernel1] (left) to
     node [sloped,below] {\small }     (center);
     \end{tikzpicture}
     \\ & + \begin{tikzpicture}[scale=0.2,baseline=0.1cm]
        \node at (0,0)  [dot,label= {[label distance=-0.2em]below: \scriptsize  } ] (root) {};
         \node at (-1,2)  [dot,label={[label distance=-0.2em]left: \scriptsize  $ k $} ] (left) {};
          \node at (0,4)  [dot,label={[label distance=-0.2em]right: \scriptsize  $ j $} ] (leftr) {};
          \draw[kernel1] (left) to
     node [sloped,below] {\small }     (root);
      \draw[kernel1] (leftr) to
     node [sloped,below] {\small }     (left);
     \end{tikzpicture}  \otimes \begin{tikzpicture}[scale=0.2,baseline=0.1cm]
        \node at (0,0)  [dot,label= {[label distance=-0.2em]below: \scriptsize  } ] (root) {};
         \node at (0,2)  [dot,label={[label distance=-0.2em]left: \scriptsize  $ i $} ] (center) {};
      \draw[kernel1] (root) to
     node [sloped,below] {\small }     (center);
     \end{tikzpicture}  \begin{tikzpicture}[scale=0.2,baseline=0.1cm]
        \node at (0,0)  [dot,label= {[label distance=-0.2em]below: \scriptsize  } ] (root) {};
         \node at (0,2)  [dot,label={[label distance=-0.2em]left: \scriptsize  $ \ell $} ] (center) {};
      \draw[kernel1] (root) to
     node [sloped,below] {\small }     (center);
     \end{tikzpicture} + \begin{tikzpicture}[scale=0.2,baseline=0.1cm]
        \node at (0,0)  [dot,label= {[label distance=-0.2em]below: \scriptsize  } ] (root) {};
         \node at (-1,2)  [dot,label={[label distance=-0.2em]left: \scriptsize  $ k $} ] (left) {};
          \node at (0,4)  [dot,label={[label distance=-0.2em]right: \scriptsize  $ i $} ] (leftr) {};
          \draw[kernel1] (left) to
     node [sloped,below] {\small }     (root);
      \draw[kernel1] (leftr) to
     node [sloped,below] {\small }     (left);
     \end{tikzpicture} \otimes \begin{tikzpicture}[scale=0.2,baseline=0.1cm]
        \node at (0,0)  [dot,label= {[label distance=-0.2em]below: \scriptsize  } ] (root) {};
         \node at (0,2)  [dot,label={[label distance=-0.2em]left: \scriptsize  $ j $} ] (center) {};
      \draw[kernel1] (root) to
     node [sloped,below] {\small }     (center);
     \end{tikzpicture}  \begin{tikzpicture}[scale=0.2,baseline=0.1cm]
        \node at (0,0)  [dot,label= {[label distance=-0.2em]below: \scriptsize  } ] (root) {};
         \node at (0,2)  [dot,label={[label distance=-0.2em]left: \scriptsize  $ \ell $} ] (center) {};
      \draw[kernel1] (root) to
     node [sloped,below] {\small }     (center);
     \end{tikzpicture}  +  \begin{tikzpicture}[scale=0.2,baseline=0.1cm]
        \node at (0,0)  [dot,label= {[label distance=-0.2em]below: \scriptsize  } ] (root) {};
         \node at (-1,2)  [dot,label={[label distance=-0.2em]left: \scriptsize  $ k $} ] (left) {};
          \node at (1,2)  [dot,label={[label distance=-0.2em]right: \scriptsize  $ \ell $} ] (right) {};
          \draw[kernel1] (left) to
     node [sloped,below] {\small }     (root);
      \draw[kernel1] (right) to
     node [sloped,below] {\small }     (root);
     \end{tikzpicture} \otimes \begin{tikzpicture}[scale=0.2,baseline=0.1cm]
        \node at (0,0)  [dot,label= {[label distance=-0.2em]below: \scriptsize  } ] (root) {};
         \node at (-1,2)  [dot,label={[label distance=-0.2em]left: \scriptsize  $ i $} ] (left) {};
          \node at (1,2)  [dot,label={[label distance=-0.2em]right: \scriptsize  $ j $} ] (right) {};
          \draw[kernel1] (left) to
     node [sloped,below] {\small }     (root);
      \draw[kernel1] (right) to
     node [sloped,below] {\small }     (root);
     \end{tikzpicture}
     \\ & +  \begin{tikzpicture}[scale=0.2,baseline=0.1cm]
        \node at (0,0)  [dot,label= {[label distance=-0.2em]below: \scriptsize  } ] (root) {};
         \node at (0,2)  [dot,label={[label distance=-0.2em]left: \scriptsize  $ k $} ] (center) {};
      \draw[kernel1] (root) to
     node [sloped,below] {\small }     (center);
     \end{tikzpicture}    \otimes \begin{tikzpicture}[scale=0.2,baseline=0.1cm]
        \node at (0,0)  [dot,label= {[label distance=-0.2em]below: \scriptsize  } ] (root) {};
         \node at (0,2)  [dot,label={[label distance=-0.2em]left: \scriptsize  $ \ell $} ] (center) {};
      \draw[kernel1] (root) to
     node [sloped,below] {\small }     (center);
     \end{tikzpicture}   \begin{tikzpicture}[scale=0.2,baseline=0.1cm]
        \node at (0,0)  [dot,label= {[label distance=-0.2em]below: \scriptsize  } ] (root) {};
         \node at (-1,2)  [dot,label={[label distance=-0.2em]left: \scriptsize  $ i $} ] (left) {};
          \node at (1,2)  [dot,label={[label distance=-0.2em]right: \scriptsize  $ j $} ] (right) {};
          \draw[kernel1] (left) to
     node [sloped,below] {\small }     (root);
      \draw[kernel1] (right) to
     node [sloped,below] {\small }     (root);
     \end{tikzpicture} + \begin{tikzpicture}[scale=0.2,baseline=0.1cm]
        \node at (0,0)  [dot,label= {[label distance=-0.2em]below: \scriptsize  } ] (root) {};
         \node at (-1,2)  [dot,label={[label distance=-0.2em]left: \scriptsize  $ k $} ] (left) {};
          \node at (1,2)  [dot,label={[label distance=-0.2em]right: \scriptsize  $ \ell $} ] (right) {};
          \node at (0,4)  [dot,label={[label distance=-0.2em]right: \scriptsize  $ j $} ] (leftr) {};
          \draw[kernel1] (left) to
     node [sloped,below] {\small }     (root);
      \draw[kernel1] (right) to
     node [sloped,below] {\small }     (root);
      \draw[kernel1] (leftr) to
     node [sloped,below] {\small }     (left);
     \end{tikzpicture} \otimes \begin{tikzpicture}[scale=0.2,baseline=0.1cm]
        \node at (0,0)  [dot,label= {[label distance=-0.2em]below: \scriptsize  } ] (root) {};
         \node at (0,2)  [dot,label={[label distance=-0.2em]left: \scriptsize  $ i $} ] (center) {};
      \draw[kernel1] (root) to
     node [sloped,below] {\small }     (center);
     \end{tikzpicture} + \begin{tikzpicture}[scale=0.2,baseline=0.1cm]
        \node at (0,0)  [dot,label= {[label distance=-0.2em]below: \scriptsize  } ] (root) {};
         \node at (-1,2)  [dot,label={[label distance=-0.2em]left: \scriptsize  $ k $} ] (left) {};
          \node at (1,2)  [dot,label={[label distance=-0.2em]right: \scriptsize  $ \ell $} ] (right) {};
          \node at (0,4)  [dot,label={[label distance=-0.2em]right: \scriptsize  $ i $} ] (leftr) {};
          \draw[kernel1] (left) to
     node [sloped,below] {\small }     (root);
      \draw[kernel1] (right) to
     node [sloped,below] {\small }     (root);
      \draw[kernel1] (leftr) to
     node [sloped,below] {\small }     (left);
     \end{tikzpicture} \otimes \begin{tikzpicture}[scale=0.2,baseline=0.1cm]
        \node at (0,0)  [dot,label= {[label distance=-0.2em]below: \scriptsize  } ] (root) {};
         \node at (0,2)  [dot,label={[label distance=-0.2em]left: \scriptsize  $ j $} ] (center) {};
      \draw[kernel1] (root) to
     node [sloped,below] {\small }     (center);
     \end{tikzpicture}.
     \end{aligned}
\end{equs}

\end{example}

In the sequel, we use an extension of the map $\Delta$ by identifying the nodes. We will consider the nodes of $ h^{(1)} $ and $h^{(2)}$ as a subset of those of $h$. Therefore, the roots of the trees in $ h^{(2)} $ will be identified with some nodes in $h^{(1)}$. We still use the same notation with this identification.
This property is crucial in order to define a convolution operation on tree-indexed iterated integrals. We also define the reduced coproduct $\tilde{\Delta}$ as follows:
\begin{equs}
\tilde{\Delta} h = \Delta h - h \otimes \one - \one \otimes h = \sum_{(h)'} h^{(1)} \otimes h^{(2)}.
\end{equs}

\begin{remark}
While the coproduct $\Delta$ introduced is slightly different from the original Connes-Kreimer coproduct, one has a projection of the algebra $\mathcal{H}$ onto the classical algebra of forests that intertwines their actions. Hence, while the the coproduct $\Delta$ carries slightly more information, one has a morphism onto the original Connes-Kreimer Hopf algebra. To be specific, we define the operator $B_- $ as the left inverse of $ B_+ $. Then $B_- $ is an algebra morphism from $\CF$ to $\hat \CF$ when these are equipped with the corresponding forest products and given $\Delta_{\text{CK}}$, the Connes-Kreimer coproduct, one has
\begin{equs} \label{connection_plugging}
\left(B_- \otimes B_- \right) \Delta = \Delta_{\tiny{\text{CK}}} B_-, \quad \left( \id \otimes B_- \right) \hat \Delta  = \hat \Delta_{\tiny{\text{CK}}} B_-
\end{equs}
where
\begin{equs}
\label{recursive_formula_CK} \begin{aligned}
\Delta_{\tiny{\text{CK}}} h_1 \cdot ... \cdot h_n & = \sum_{I \subset \lbrace 1,..., n \rbrace} \left( \id \otimes \prod_{i \in I} h_i \right) \hat \Delta_{\tiny{\text{CK}}} \prod_{i \in \lbrace 1,...,n\rbrace \setminus I} h_i. \\
\hat \Delta_{\tiny{\text{CK}}} B_+^{k}(h_1 \cdot ... \cdot h_n) & = \sum_{I \subset \lbrace 1,..., n \rbrace} \left( B_+^k(\cdot) \otimes \prod_{i \in I} h_i \right) \hat \Delta_{\tiny{\text{CK}}} \prod_{i \in  \lbrace 1,...,n\rbrace \setminus I} h_i.
\end{aligned}
\end{equs}
Indeed, by applying $ B_- $ to \eqref{recursive_formula} one gets
\begin{equs} \label{identity_CK_1}
\left(B_- \otimes B_- \right) \Delta B_+(h_1 \cdot ... \cdot h_n) & =  \sum_{I \subset \lbrace 1,..., n \rbrace} \left( \id \otimes\prod_{i \in I} h_i \right) \\ &  (\id \otimes B_-) \hat \Delta \prod_{i \in  \lbrace 1,...,n\rbrace \setminus I } h_i.
\end{equs}
\begin{equs} \label{identity_CK_2}
\left( \id  \otimes B_-\right)\hat \Delta B_+^{k}(h_1 \cdot ... \cdot h_n) & = \sum_{I \subset \lbrace 1,..., n \rbrace} \left( B_+^k(\cdot) \otimes \prod_{i \in I} h_i \right) \\ & \left( \id \otimes B_-\right) \hat \Delta \prod_{i \in  \lbrace 1,...,n\rbrace \setminus I } h_i.
\end{equs}
Using an inductive argument, one can show \eqref{connection_plugging} by using \eqref{identity_CK_1} and \eqref{identity_CK_2}.
\end{remark}

\vspace{10pt}

Before introducing the convolution operation, we extend Definition~\ref{def_trees_integral} to objects with forest indices.

\begin{definition} \label{def_forests_integral}
Let $ h = h_1 \cdot ... \cdot h_n $ be in $ \CF $. We define the $h-th$ \emph{iterated Volterra integral} as:
\begin{equs}
\textbf{z}^{h, \tau_1,...,\tau_n}_{ts} = \prod_{i=1}^n \textbf{z}^{h_i, \tau_i}_{ts}
\end{equs}
where $ \tau_1,...,\tau_n \in [s,t] $. The function $ (\tau_1,...,\tau_n ) \mapsto \prod_{i=1}^n \textbf{z}^{h_i, \tau_i}_{ts} $ is obtained by tensorising the functions $\textbf{z}^{h_i, \cdot}_{ts}$.
\end{definition}

\begin{remark} Definition~\ref{def_forests_integral} is the reason why we have introduced the spaces $ \CT $ and $ \CF $. Elements of $ \CT $ can be seen as forests and in this case one could see the elements of $ \CF $ as "forests of forests". Indeed, we have a bijection between trees in $ \CT $ and forests in $ \hat \CF $. Given a tree $ h \in \CT  $, one just needs to remove the root and the edges adjacent to it in order to obtain a forest. Below, we illustrate this bijection:
\begin{equs}
 \begin{tikzpicture}[scale=0.2,baseline=0.1cm]
        \node at (0,0)  [dot,label= {[label distance=-0.2em]below: \scriptsize  } ] (root) {};
         \node at (-1,2)  [dot,label={[label distance=-0.2em]left: \scriptsize  $ k $} ] (left) {};
          \node at (1,2)  [dot,label={[label distance=-0.2em]right: \scriptsize  $ \ell $} ] (right) {};
          \node at (0,4)  [dot,label={[label distance=-0.2em]right: \scriptsize  $ j $} ] (leftr) {};
          \node at (-2,4)  [dot,label={[label distance=-0.2em]left: \scriptsize  $ i $} ] (leftl) {};
          \draw[kernel1] (left) to
     node [sloped,below] {\small }     (root);
      \draw[kernel1] (right) to
     node [sloped,below] {\small }     (root);
     \draw[kernel1] (leftl) to
     node [sloped,below] {\small }     (left);
      \draw[kernel1] (leftr) to
     node [sloped,below] {\small }     (left);
     \end{tikzpicture}  \longleftrightarrow
     \begin{tikzpicture}[scale=0.2,baseline=0.1cm]
        \node at (0,0)  [dot,label= {[label distance=-0.2em]below: \scriptsize $ k $ } ] (root) {};
         \node at (-1,2)  [dot,label={[label distance=-0.2em]left: \scriptsize  $ i $} ] (left) {};
          \node at (1,2)  [dot,label={[label distance=-0.2em]right: \scriptsize  $ j $} ] (right) {};
          \draw[kernel1] (left) to
     node [sloped,below] {\small }     (root);
      \draw[kernel1] (right) to
     node [sloped,below] {\small }     (root);
     \end{tikzpicture} 
     \begin{tikzpicture}[scale=0.2,baseline=0.1cm]
        \node at (0,0)  [dot,label= {[label distance=-0.2em]below: \scriptsize $ \ell $ } ] (root) {};
     \end{tikzpicture}
\end{equs}
The use of the spaces $ \CT $ and $ \CF $ is also robust to the introduction of decorations on the edges. This structure is also reminiscent of the one used in \cite{BS} for dispersive equations, where a planted tree is associated to a frequence and planted trees with the same frequence can been seen as a tree.
\end{remark}

As mentioned, iterated Volterra integrals satisfy a generalized Chen identity. We prove this below.

\begin{proposition}\label{propchen}
Let $ h $ be a tree in $ \CT $ and $ (s,u,t,\tau) \in \Delta_4$, we have
\begin{equs} \label{convolution_chen}
\mathbf{z}^{h,\tau}_{ts} = \sum_{(h)}\mathbf{z}_{tu}^{h^{(1)},\tau} \star \mathbf{z}^{h^{(2)},\cdot}_{us}
\end{equs}
where  the convolution product $ \star $ is defined as follows
\begin{equs} \label{def_convol}
\mathbf{z}_{tu}^{h^{(1)},\tau} \star \mathbf{z}^{h^{(2)},\cdot}_{us} := \int_{\R^{m}} \bar{\mathbf{z}}_{tu}^{h^{(1)},\tau}((r_i)_{i \in V}) \prod_{i \in V} \mathbf{z}^{h^{(2)}_i,r_i}_{us} d r_i
\end{equs}
Here, $ V \subset N_{h^{\star}} $ is of cardinality $ m $ and is such that every $ i \in V $ considered as a node of $ h $ appears also as a root of a tree $ h^{(2)}_i $ in $ h^{(2)} $ and $ h^{(2)} = \prod_{i \in V} h_i^{(2)} $.
\end{proposition}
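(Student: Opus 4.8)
The plan is to prove \eqref{convolution_chen} in exactly the way one proves Chen's relation for ordinary iterated integrals: by decomposing the domain $A^h_{ts}$ appearing in Definition~\ref{def_trees_integral} according to the position of each integration variable relative to the intermediate time $u$. I would fix $(s,u,t,\tau)\in\Delta_4$, write $\mathbf{z}^{h,\tau}_{ts}=\int_{A^h_{ts}}\prod_{(i,j)\in E_h}k(r_i,r_j)\prod_{\ell\in N_{h^\star}}dq^{i_\ell}_{r_\ell}$ with the convention $r_\rho=\tau$, and then, since every $r_\ell$ lies in $(s,t)$, split $A^h_{ts}=\bigsqcup_{\epsilon}\big(A^h_{ts}\cap\{\epsilon\}\big)$ up to a null set over the maps $\epsilon\colon N_{h^\star}\to\{<u,>u\}$ recording, for each node $\ell$, whether $r_\ell<u$ or $r_\ell>u$.

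Next I would identify which $\epsilon$ actually contribute and relate them to the coproduct. An order relation $t>r_i>r_j>s$ coming from an edge $(i,j)\in E_{h^\star}$ is compatible with $\epsilon$ unless a parent lies below $u$ while one of its children lies above it; hence $A^h_{ts}\cap\{\epsilon\}$ is non-empty iff $L_\epsilon:=\{\ell:\,r_\ell<u\}$ is closed under passing to children in $h^\star$, equivalently iff $U_\epsilon\cup\{\rho\}$ (with $U_\epsilon:=N_{h^\star}\setminus L_\epsilon$) is ancestor-closed in $h$. Such down-sets are in bijection with the admissible cuts of $h$: to $\epsilon$ one assigns the cut $C_\epsilon$ made of the edges running from $U_\epsilon\cup\{\rho\}$ down into $L_\epsilon$; under this correspondence $U_\epsilon\cup\{\rho\}$ with its internal edges is precisely the trunk $R^{C_\epsilon}(h)=h^{(1)}$, while $L_\epsilon$ together with the retained cut edges and their upper endpoints is the pruned forest $\tilde P^{C_\epsilon}(h)=h^{(2)}$, so the sum over $\epsilon$ becomes the Sweedler sum over the terms of $\Delta h$ (including the two degenerate terms $h\otimes\one$ and $\one\otimes h$). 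The crux is that on $A^h_{ts}\cap\{\epsilon\}$ the order constraints decouple: those among the upper variables become the relations $t>r_i>r_j>u$ defining $A^{h^{(1)}}_{tu}$, those among the lower variables become the relations $u>r_i>r_j>s$, and each relation across a cut edge $(i,j)$ holds automatically because $r_i>u>r_j$.

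It would then remain to distribute the integrand over the pieces. The kernel factors $k(r_i,r_j)$ with both endpoints upper go into the $h^{(1)}$-integral and those with both endpoints lower into the relevant $h^{(2)}$-factor, while each cut-edge kernel $k(r_i,r_j)$ becomes exactly the root-edge kernel of the pruned subtree $h^{(2)}_i$ rooted at the cut-point $i$. The differentials $dq^{i_\ell}_{r_\ell}$ for non-cut-points go to the factor carrying $\ell$; at a cut-point $i\in V$ (a node of $h^{(1)}$, whose decoration stays with $h^{(1)}$ and which reappears as the undecorated root of $h^{(2)}_i$) I would write $dq^{i_i}_{r_i}=\dot q^{i_i}_{r_i}\,dr_i$, keeping $\dot q^{i_i}_{r_i}$ inside the $h^{(1)}$-integral---so that the $h^{(1)}$-part with the times $(r_i)_{i\in V}$ frozen is precisely $\bar{\mathbf{z}}^{h^{(1)},\tau}_{tu}((r_i)_{i\in V})$---and leaving $\prod_{i\in V}dr_i$ as outer integration variables, the lower piece attached at each $i$ producing $\mathbf{z}^{h^{(2)}_i,r_i}_{us}$ (and when a cut edge is adjacent to the root of $h$, the corresponding outer ``variable'' is simply $\tau=r_\rho$, carried with no decoration and no integration). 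This gives $\int_{A^h_{ts}\cap\{\epsilon\}}=\int_{\R^{m}}\bar{\mathbf{z}}^{h^{(1)},\tau}_{tu}((r_i)_{i\in V})\prod_{i\in V}\mathbf{z}^{h^{(2)}_i,r_i}_{us}\,dr_i=\mathbf{z}^{h^{(1)},\tau}_{tu}\star\mathbf{z}^{h^{(2)},\cdot}_{us}$ by \eqref{def_convol}; summing over $\epsilon$ yields \eqref{convolution_chen}. All the rearrangements (interchanging finite sums with integrals, Fubini) are legitimate because $q\in\mathcal{C}^1$ and $k$ is locally integrable by the first bound in \eqref{condition_k}, so every integral in sight converges absolutely.

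The analytic input is therefore minimal, and the difficulty is entirely in the combinatorial bookkeeping of the middle two steps; the point I expect to demand the most care is verifying that the chain of bijections ``compatible up/down splittings $\leftrightarrow$ admissible cuts $\leftrightarrow$ Sweedler terms of the plugging coproduct $\Delta$'' is the correct one. In particular, the feature that distinguishes $\Delta$ from the Connes--Kreimer coproduct---that cut edges are \emph{retained} in the pruned forest and the cut-point nodes are identified across the two tensor factors---is exactly what is needed: it is what makes the cut-edge kernels $k(r_i,r_j)$ land on the pruned subtrees as root-edge kernels and what makes $\bar{\mathbf{z}}^{h^{(1)},\tau}_{tu}$ appear with its $\dot q$-factors precisely at the nodes of $V$. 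Getting this matching right, together with the correct accounting of the degenerate cuts, is the only delicate part. An alternative, possibly tidier, route would be an induction on $|h|$ via the recursive description \eqref{recursive_formula} of $\Delta$ and $\hat\Delta$, combined with Definition~\ref{def_forests_integral} and a one-step version of the domain splitting above.
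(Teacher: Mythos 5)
Your proposal is correct and follows essentially the same route as the paper's proof: both partition the domain $A^{h}_{ts}$ according to which integration variables exceed the intermediate time $u$, identify the resulting (disjoint, up to null sets) pieces with the admissible cuts indexing the plugging coproduct, and conclude by splitting each piece into a Cartesian product and applying Fubini. Your write-up makes the bijection between compatible up/down splittings and admissible cuts, and the bookkeeping of cut-edge kernels and $\dot q$-factors at the nodes of $V$, more explicit than the paper does, but the underlying argument is the same.
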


\begin{remark}
In the proposition above, the set $ V $ is implicitly given by the fact that we have identified the nodes of $ h^{(1)} $ and $ h^{(2)} $ with those of $ h $. This avoids carrying along notation of the type:
\begin{equs}
\mathbf{z}_{tu}^{h^{(1)},\tau} \star_{f_V} \mathbf{z}^{h^{(2)},\cdot}_{us}
\end{equs}
where here $V$ is a subset of the nodes of $h^{(1)}$ with cardinality equal to the number of roots in $h^{(2)}$ and $f_V   $ is a bijection between $ V $ and the roots of  $ h^{(2)} $.
\end{remark}

\begin{proof}
Let $h$ be a tree with $n +1$ vertices. Using the convention that $r_{\varrho} = \tau$ where $ \varrho  $ is the root of $ h $, we have
\begin{equs}
\textbf{z}^{h, \tau}_{ts} =  \int_{A_{ts}^h \subseteq \R^{n}} \prod_{ (i,j) \in E_h} k(r_{i}, r_{j}) \prod_{\ell \in N_{h^{\star}}} dq^{i_{\ell}}_{r_{\ell}}
\end{equs}
Let $u \in [s,t]$, then one notices that

$$
A_{ts}^h = \bigcup_{C \in \scriptsize{\text{Adm}}(h) } \bigcap_{ (i,j) \in E_{h^{\star}} } \bigcap_{ (k, l) \in C } \bigcap_{ \lambda \in L_{R^{C}(h)}} \{ u > r_{\lambda} \} \cap \{ r_{k} > u > r_{l} \} \\ \cap \{ t > r_{i} > r_{j} > s \}
$$

All unions in the above expression are pairwise disjoint. Indeed, if we consider two different admissible cuts $ C $ and $C'$, then there exists $ \tilde{\lambda} $ belonging to $ L_{R^{C}(h)} $ (or $ L_{R^{C'}(h)} $) such that there exists $ \lambda \in L_{R^{C'}(h)} $ with $ r_{\lambda} > r_{\tilde{\lambda}} $. Then $  \{ u > r_{\lambda} \}  $ and $  \{r_{\lambda}  >u > r_{\tilde{\lambda}} \}  $ are disjoint.
Therefore,
\begin{equs}
\textbf{z}^{h}_{ts} =  \sum_{C \in \scriptsize{\text{Adm}}(h)  } \int_{A^{C,h}_{ts}(u) \subseteq \R^{n}} \prod_{ (i,j) \in E_h} k(r_{i}, r_{j}) \prod_{\ell \in N_{h^{\star}}} dq^{i_{\ell}}_{r_{\ell}}
\end{equs}
where $A^{C,h}_{ts}(u)$ is defined as follows: 
\begin{equs} 
A^{C,h}_{ts}(u) = \bigcap_{ (i,j) \in E_{h^{\star}} } \bigcap_{ (k, l) \in C } \bigcap_{ \lambda \in L_{R^{C}(h)}} \{ u > r_{\lambda} \} \cap \{ r_{k} > u > r_{l} \} \cap \{ t > r_{i} > r_{j} < s \}
\end{equs}
If $ h^{(1)} = R^{C}(h) $ and $ h^{(2)} = \tilde{P}^C(h) $, then one has:
\begin{equs}
A_{ts}^{C, h}(u) = A_{tu}^{h^{(1)}} \times A_{us}^{h^{(2)}}.
\end{equs}
Then, we can rewrite the above expression for $A_{ts}^{C, h}(u)$ as
\begin{equs}
A^{C,h}_{ts}(u) & = \left( \bigcap_{ (i,j) \in E_{h^{(1)}} } \{ t > r_{i} > r_{j} > u \} \right) \times \\ & \left( \bigcap_{ (i,j) \in E_{h^{(2)}} } \bigcap_{\lambda \in L_{R^{C}(h)}} \{ u>r_{\lambda} \} \cap \{ u > r_{i} > r_{j} > s \} \right) 
\end{equs} 
where the sets intersected on the right hand side are each considered as a subset of an appropriate subspace of $\mathbf{R}^{n}$. We conclude by using Fubini.
\end{proof}

\begin{example} If $h \in \CT$ is the ladder tree consisiting of a root and $n$ additional vertices decorated by $ i_1,...,i_{n} $, then for $t > r_1 > s$ we have:
\begin{equs}
\bar{\mathbf{z}}^{ h, \tau}_{ts}(r_1) =  \int_{t>r_{n}>... >r_{2}> r_1 }k( \tau, r_{n}) \prod_{j=1}^{n-1}k(r_{j+1}, r_{j})\, \dot{q}^{i_1}_{r_{1}} dq^{i_2}_{r_2}...dq^{i_n}_{r_{n}}
\end{equs}
In similar spirit, in the case of $h$ being a linear tree, we get a simpler version of Chen's relation closer to the one for classical Rough Paths. Proposition \ref{propchen} reduces to
\begin{equs}
\mathbf{z}_{ts}^{n,\tau} = \sum_{i=0}^{n}  \mathbf{z}_{tu}^{n-i, \tau} \star \mathbf{z}_{us}^{i, \cdot}
\end{equs}
where the superscript $n$ is used to denote the linear tree with $n$ vertices and set of node decorations of cardinality one. From this relation we can recover the classical Chen's relation for the signature of a path with bounded variation, since the convolution product reduces to the tensor product by choosing a trivial kernel. 
\end{example}
 
Our aim now is to capture the algebraic properties of these generalized signatures in order to abstract from them the definition of a Branched Rough Path of Volterra-type. With this in mind, we prove the following proposition, which describes the convolution product in terms that can be generalized in the rough setting. The idea is that we can then use an expression similar to the one below as a definition for the convolution product in the rough case. Given a tree $ h $, we first extend \eqref{def_convol} to a function $ (\tau_i)_{i\in N_{h^{\star}}} \mapsto f_s^{(\tau_i)_{i\in N_{h^{\star}}}} $. We have:
\begin{equs}
  \mathbf{z}_{ts}^{h,\tau} \star f_{s}
  := \int_{\R^{m}} \bar{\mathbf{z}}_{ts}^{h,\tau}((r_i)_{i \in N_{h^{\star}}}) f_s^{(r_i)_{i\in N_{h^{\star}}}}\prod_{i \in N_{h^{\star}}}  d r_i
\end{equs}
where $ m $ is equal to the cardinality of $ N_{h^{\star}} $. From this definition, one can make sense of the following expression:
\begin{equs}
 \mathbf{z}^{h^{(1)}, \tau}_{vu} \star (\mathbf{z}_{us}^{h^{(2)}} \star f_{s})
\end{equs}
where $ h^{(1)} $ and $ h^{(2)} $ are coming from  \eqref{Delta_12} with the identification of their nodes with those of $ h $. The inner convolution is happening on the nodes $ N_{(h^{(2)})^{\star}} $ and the outer convolution is happening on the nodes $ N_{(h^{(1)})^{\star}} $.

\begin{proposition} \label{associativity_smooth}

Let $h$ be a tree with $n$ decorated vertices  and let $ \mathcal{P}$ denote a partition of $ [s,t]$ with mesh $|\mathcal{P}|$. Let $f_s: \R^n \rightarrow \R^{e}$ be a smooth function whose upper arguments are indexed by $  N_{h^{\star}}$. That is, for every $s \in \mathbf{R}$ we have a mapping: $ (\tau_i)_{i\in N_{h^{\star}}} \mapsto f_s^{(\tau_i)_{i\in N_{h^{\star}}}} $. Then, the following identity holds:
\begin{equs} \label{chen_smooth}
 \mathbf{z}^{h, \tau}_{ts} \star f_{s} = \lim_{|\mathcal{P}| \rightarrow 0} \sum_{[u, v] \in \mathcal{P}} \left(  \mathbf{z}^{h, \tau}_{vu} \star f_{s}  +  \sum_{(h)'}  \mathbf{z}^{h^{(1)}, \tau}_{vu} \star (\mathbf{z}_{us}^{h^{(2)}} \star f_{s}) \right)
\end{equs}
\end{proposition}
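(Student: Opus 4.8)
The plan is to deduce \eqref{chen_smooth} from the generalized Chen relation of Proposition~\ref{propchen} by a telescoping argument. Because the base point $s$ of the family $f_s$ stays fixed while only the right endpoint moves, each bracketed summand on the right-hand side of \eqref{chen_smooth} will in fact be an \emph{exact} increment of the function $r\mapsto\mathbf{z}^{h,\tau}_{rs}\star f_s$, so the asserted identity will hold for every partition $\mathcal{P}$ and therefore a fortiori in the limit $|\mathcal{P}|\to0$. I state it with a limit nonetheless, since this is the shape in which it will be promoted to a \emph{definition} in the rough setting, where the exact Chen relation is no longer available.

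\emph{Step 1: a convolutional Chen identity with a passenger function.} Fix $s<u<v\le t$. Unravelling the extension of \eqref{def_convol} introduced just before the statement,
\begin{equs} \label{unravel}
\mathbf{z}^{h,\tau}_{vs}\star f_s \;=\; \int_{\R^{|N_{h^\star}|}} \bar{\mathbf{z}}^{h,\tau}_{vs}\bigl((r_\ell)_{\ell\in N_{h^\star}}\bigr)\, f_s^{(r_\ell)_{\ell\in N_{h^\star}}} \prod_{\ell\in N_{h^\star}} dr_\ell,
\end{equs}
where $\bar{\mathbf{z}}^{h,\tau}_{vs}\bigl((r_\ell)_\ell\bigr)$ carries the product $\prod_{\ell\in N_{h^\star}}\dot q^{i_\ell}_{r_\ell}\prod_{(i,j)\in E_h}k(r_i,r_j)$ together with the indicator of the domain $A^h_{vs}$, using the convention $r_\varrho=\tau$. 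Now I rerun the argument from the proof of Proposition~\ref{propchen}, with $v$ in the role of $t$ and intermediate point $u$: the domain splits as the \emph{disjoint} union $A^h_{vs}=\bigsqcup_{C\in\mathrm{Adm}(h)}A^{C,h}_{vs}(u)$ with $A^{C,h}_{vs}(u)=A^{h^{(1)}}_{vu}\times A^{h^{(2)}}_{us}$, where $h^{(1)}=R^C(h)$, $h^{(2)}=\tilde{P}^C(h)$; and on each such piece the integrand factorizes, the $\dot q$-factors and the kernels splitting according to whether a node, resp.\ an edge, lies in the trunk or strictly above the cut, the kernel on a cut edge $(k,l)$ being absorbed into the tree of $h^{(2)}$ whose undecorated root is the trunk node $k$ it is glued to. Multiplying \eqref{unravel} through by the inert factor $f_s^{(r_\ell)_{\ell\in N_{h^\star}}}$ and integrating — by Fubini, first over the variables $N_{(h^{(2)})^\star}$ to build the inner convolution $\mathbf{z}^{h^{(2)}}_{us}\star f_s$, then over the variables $N_{(h^{(1)})^\star}$ to build the outer one — reproduces exactly the nested convolution defined right before the statement, and yields
\begin{equs} \label{chen_with_f}
\mathbf{z}^{h,\tau}_{vs}\star f_s \;=\; \sum_{(h)} \mathbf{z}^{h^{(1)},\tau}_{vu}\star\bigl(\mathbf{z}^{h^{(2)}}_{us}\star f_s\bigr),
\end{equs}
the sum running over all components of $\Delta h$.

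\emph{Step 2: isolating the trivial cuts and telescoping.} In \eqref{chen_with_f} the cut $C=\emptyset$ (so $h^{(1)}=h$, $h^{(2)}=\one$) contributes $\mathbf{z}^{h,\tau}_{vu}\star f_s$, the inner convolution over the empty index set being trivial, while the maximal cut (so $h^{(1)}=\one$, $h^{(2)}=h$) contributes $\mathbf{z}^{h,\tau}_{us}\star f_s$, the outer convolution over the empty index set being trivial and the root $\varrho\notin V$ keeping the value $\tau$. Subtracting these off,
\begin{equs} \label{increment}
\mathbf{z}^{h,\tau}_{vs}\star f_s - \mathbf{z}^{h,\tau}_{us}\star f_s \;=\; \mathbf{z}^{h,\tau}_{vu}\star f_s + \sum_{(h)'} \mathbf{z}^{h^{(1)},\tau}_{vu}\star\bigl(\mathbf{z}^{h^{(2)}}_{us}\star f_s\bigr).
\end{equs}
Writing $\mathcal{P}=\{s=t_0<t_1<\dots<t_N=t\}$ and $\Phi(r):=\mathbf{z}^{h,\tau}_{rs}\star f_s$, identity \eqref{increment} applied with $u=t_{i-1}$, $v=t_i$ says that the bracketed summand of \eqref{chen_smooth} attached to $[t_{i-1},t_i]$ equals $\Phi(t_i)-\Phi(t_{i-1})$. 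Summing in $i$ telescopes to $\Phi(t)-\Phi(s)=\mathbf{z}^{h,\tau}_{ts}\star f_s$, since $\Phi(t)=\mathbf{z}^{h,\tau}_{ts}\star f_s$ and $\Phi(s)=0$ because $\mathbf{z}^{h,\tau}_{ss}$ is an integral over the empty domain $A^h_{ss}$. As the value is partition-independent, passing to $|\mathcal{P}|\to0$ gives \eqref{chen_smooth}.

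The only genuine obstacle is the bookkeeping in the factorization step: one has to track the identification of the roots of the trees of $h^{(2)}$ with nodes of $h^{(1)}$ (the set $V$), so that under the decomposition $N_{h^\star}=N_{(h^{(1)})^\star}\sqcup N_{(h^{(2)})^\star}$ each decorated node of $h$ contributes exactly one $\dot q$-factor, each edge exactly one kernel, and the arguments of $f_s$ are routed to the inner convolution on $N_{(h^{(2)})^\star}$ and to the outer one on $N_{(h^{(1)})^\star}$. This is precisely the combinatorics already established in the proof of Proposition~\ref{propchen}; here it is carried through verbatim with $f_s$ riding along as an inert factor, so nothing new is required beyond careful notation.
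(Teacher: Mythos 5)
Your proof is correct, and it reaches \eqref{chen_smooth} by a route that is organized differently from the paper's. The paper proves the identity in one shot: it partitions the full domain $A^{h}_{ts}$ over the intervals of $\mathcal{P}$ according to where the maximum $r_1\vee\cdots\vee r_m$ of the root-adjacent variables falls, refines each piece by the uniquely induced admissible cut, and applies Fubini cell by cell; the resulting identity is exact for every partition, so the limit is vacuous there too. You instead first upgrade Proposition~\ref{propchen} to a two-point Chen identity with the passenger $f_s$ riding along (the same cut decomposition and Fubini argument, with $f_s$ as an inert factor whose arguments are routed to the inner or outer convolution according to the splitting $N_{h^\star}=N_{(h^{(1)})^\star}\sqcup N_{(h^{(2)})^\star}$), and then observe that, after peeling off the two primitive terms, the bracketed summand of \eqref{chen_smooth} is the exact increment of $r\mapsto \mathbf{z}^{h,\tau}_{rs}\star f_s$, so the partition sum telescopes. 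The underlying combinatorics is identical, but your organization makes the partition-independence of the right-hand side transparent and confines the measure-theoretic work to the single-intermediate-point identity, whereas the paper's version avoids restating Proposition~\ref{propchen} with the extra factor at the cost of the auxiliary condition $u<r_1\vee\cdots\vee r_m<v$ needed to make its union disjoint. Your treatment of the two trivial cuts --- the empty cut contributing $\mathbf{z}^{h,\tau}_{vu}\star f_s$ and the total cut contributing $\mathbf{z}^{h,\tau}_{us}\star f_s$ with the root retaining the value $\tau$ --- is exactly the bookkeeping required by the paper's conventions, and the vanishing of $\Phi$ at $r=s$ closes the telescoping argument.
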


\begin{proof}

Let $h$ be a tree with $n +1 $ vertices. Assuming that $ r_1,..., r_m $ refer to the integration variables associated with the nodes $ \rho_1,...,\rho_m $ adjacent to the root, one has
\begin{equs}
A^{h}_{ts} & = \bigcup_{[u, v] \in \mathcal{P}}  A^{h}_{ts}(u, v)
\end{equs}
where the disjoint sets $ ( A^{h}_{ts}(u,v))_{[u,v] \in \mathcal{P}} $ are defined by
\begin{equs}
A^{h}_{ts}(u,v) := \bigcap_{(i,j) \in E_h}  \lbrace
u < r_1 \vee ... \vee r_m < v  \rbrace \cap \lbrace t > r_i > r_j > s \rbrace.
\end{equs}
Then, given $ (r_i)_{i \in N_{h}} \in A_{ts}^h(u,v) $, one has either $ u < r_i < v $ for all $ i \in N_h $, or there exists a unique $ C \in \text{Adm}(h)' $ with $ C  $ non empty such that $ u > r_{\lambda} $ for all $ \lambda \in N_{\tilde{P}^{C}(h)} $. As a result, 
\begin{equs}
A^{h}_{ts}(u, v) = \bigcup_{C \in Adm(h)'} A^{h, C}_{ts}(u,v)
\end{equs}
where
\begin{equs}
A^{h, C}_{ts}(u, v) & = \bigcap_{ (i,j) \in E_{h} }  \bigcap_{ (k, l) \in C } \bigcap_{\lambda \in N_{\tilde{P}^{C}(h)}}  \{ u < r_{1} \vee  ... \vee  r_{m} < v\} \cap \{ u > r_{\lambda} \} \\ & \cap \{ r_{k} > u > r_{l} \} \cap \{ t > r_{i} > r_{j} > s \}.
\end{equs}
 Therefore,
\begin{equs}
\mathbf{z}^{h}_{ts} = \sum_{[u, v] \in \mathcal{P}}  \sum_{C \in \scriptsize{\text{Adm}}(h)'}  \int_{A^{h, C}_{ts}(u, v) \subseteq \R^{n}} \prod_{ (i,j) \in h} k(r_{i}, r_{j})\prod_{\ell \in N_{h^{\star}}} dq^{i_{\ell}}_{r_{\ell}}
\end{equs}

Thus, since the above decomposition of $A^{h}_{ts}$ is true for any partition $\mathcal{P}$ of $[s,t]$, we can split each set $A^{h, C}_{ts}(u, v)$ into a Cartesian product and use Fubini's theorem to obtain the equality
\begin{equs} \label{chen_smooth}
 \mathbf{z}^{h, \tau}_{ts} \star f_{s} = \lim_{|\mathcal{P}| \rightarrow 0} \sum_{[u, v] \in \mathcal{P}}  \left(  \mathbf{z}^{h, \tau}_{vu} \star f_{s}  + \sum_{(h)'} \mathbf{z}^{h^{(1)}, \tau}_{vu} \star (\mathbf{z}_{us}^{h^{(2)}} \star f_{s}) \right)
\end{equs}
which is precisely the desired identity. This concludes the proof. 
\end{proof}

We now wish to extend this product operation in the rough setting. In order to do this, we will need a version of the Sewing Lemma different from the one used in classical Rough Path Theory. We will call it the Volterra Sewing Lemma, following the terminology used in \cite{HT}.
We first introduce the spaces needed for the formulation of this lemma. We recall \cite[Def. 2.3]{HT2}.
\begin{definition}\label{Volterra space}
Let $ (\alpha, \gamma) \in [0,1]^2 $ with $ \rho = \alpha - \gamma > 0 $, we define the following space $ \mathcal{V}^{(\alpha,\gamma)}(\Delta_2;\R) $ of functions $ z $ such that $ z_{0}^{\tau} = z_0 \in \R $ and equipped with the norm:
\begin{equs}
\Vert z \Vert_{(\alpha,\gamma)} = |z_0| + \Vert z \Vert_{(\alpha,\gamma),1} + \Vert z \Vert_{(\alpha,\gamma),1,2}
\end{equs}
where 
\begin{equs}
\Vert z \Vert_{(\alpha,\gamma),1} & = \sup_{(s,t,\tau) \in \Delta_3} \frac{|z_{st}^{\tau}|}{|\tau-t|^{-\gamma} |t-s|^{\alpha} \wedge |\tau-s|^{\varrho}} \\
\Vert z \Vert_{(\alpha,\gamma),1,2} &  = \sup_{\substack{(s,t,\tau,\tau') \in \Delta_4 \\ \eta \in [0,1], \zeta \in [0,\varrho)}} \frac{|z_{st}^{\tau \tau'}|}{
|\tau - \tau'|^{\eta} |\tau'-t|^{-\eta+ \zeta}  ( |\tau'-t|^{-\gamma-\zeta} |t-s|^{\alpha} \wedge |\tau' - s|^{\varrho-\zeta} )  } 
\end{equs}
where the increments $ z^{\tau}_{st} $ and $ z^{\tau \tau'}_{st} $ are given by
\begin{equs}
z^{\tau}_{st} = z^{\tau}_t - z^{\tau}_s, \quad  z^{\tau \tau'}_{st} = z^{\tau}_{t}-z^{\tau'}_{t}-z_{s}^{\tau}+z_{s}^{\tau'}.
\end{equs}
Given $\alpha$ and $\gamma$ in $[0,1]$ we also define the norm for elements $\omega$ of the $k$-fold tensor product $\bigotimes_{i=1}^{k} \mathcal{V}^{(\alpha, \gamma)}$ to be the projective tensor norm
\begin{equs}
 ||\omega||_{(\alpha, \gamma)} = \inf \{ \ \sum ||z^{1}||\cdot ... \cdot||z^{k}|| \ : \ \omega = \sum z^{1} \otimes ... \otimes z^{k} \}
\end{equs}
where the infimum is taken over all decompositions of $\omega$. This defines a \emph{crossnorm}, in the sense that 
\begin{equs}
|| z^{1} \otimes ... \otimes z^{k}||_{(\alpha, \gamma)}  = ||z^{1}||_{(\alpha, \gamma)} \cdot ... \cdot||z^{k}||_{(\alpha, \gamma)} 
\end{equs}
We denote the completion of the algebraic tensor product with respect to this norm by $\hat{\bigotimes}_{i=1}^{k} \mathcal{V}^{(\alpha, \gamma)}$.

\end{definition}

\begin{definition}
Given any Banach space $V$ we define the operator $\delta : \mathcal{C}(\Delta_{2}; V) \rightarrow \mathcal{C}(\Delta_{3}; V)$ as follows:

\begin{equs}
(\delta f)_{tus} = f_{ts} - f_{tu} - f_{us}.
\end{equs}

We will usually write this as 

\begin{equs}
\delta_{u}f_{ts} = f_{ts} - f_{tu} - f_{us}.
\end{equs}

\end{definition}

We now introduce the space of abstract integrands that is needed to formulate the Volterra Sewing Lemma, as given in \cite[Def. 2.9]{HT2}.

\begin{definition}\label{new abstract integrnds space}

Let $\alpha\in\left(0,1\right)$, $\gamma\in (0,1)$ with $\alpha-\gamma>0$, and $ \eta $ with $ \gamma + \eta \in (0,1) $. We suppose given three coefficients $(\beta,\kappa,\theta)$, with $(\kappa+\theta)\in (0,1)$ and $\beta\in(1,\infty)$. Denote by $\mathcal{V}^{(\alpha,\gamma, \eta)(\beta,\kappa,\theta)}(\Delta_{4};\R^{d})$, the space of all functions of the form $\Delta_{4}\ni(v,s,t,\tau) \mapsto (\varXi^{\tau}_{v})_{ts}\in \R^{d}$
such that the following norm is finite:
\begin{equation}\label{301}
\left\|\varXi\right\|_{\mathcal{V}^{\left(\alpha,\gamma, \eta \right)\left(\beta,\kappa,\theta\right)}}=\left\|\varXi\right\|_{\left(\alpha,\gamma, \eta\right)}+\left\|\delta\varXi\right\|_{\left(\beta,\kappa,\theta\right)}
\end{equation}
In equation \eqref{301}, the operator $\delta$ is the one introduced above and the term $\|\delta\varXi\|_{\left(\beta,\kappa,\theta\right)}$ takes the double singularity into account. Namely, we define
\[
\begin{aligned}
	\left\| \varXi\right\|_{\left(\alpha,\gamma,\eta\right)} &=\left\|\varXi\right\|_{\left(\alpha,\gamma,\eta\right),1}+\left\|\varXi\right\|_{\left(\alpha,\gamma,\eta\right),1,2}, \\
\left\|\delta\varXi\right\|_{\left(\beta,\kappa,\theta\right)}&=\left\|\delta\varXi\right\|_{\left(\beta,\kappa,\theta\right),1}+\left\|\delta\varXi\right\|_{\left(\beta,\kappa,\theta\right),1,2},
\end{aligned}
\]
where
\begin{flalign}\label{1 delta norm}
	\left\|\varXi\right\|_{\left(\alpha,\gamma,\eta\right),1}:=\sup_{\left(v,s,t,\tau\right)\in\Delta_{4}}\frac{\left|(\varXi^{\tau}_{v})_{ts}\right|}{\left[\left|\tau-t\right|^{-\gamma}\left|t-s\right|^{\alpha}\left|s-v\right|^{-\eta}\right]\wedge \left|\tau-v\right|^{\alpha-\gamma-\eta}},\\
\left\|\delta\varXi\right\|_{\left(\beta,\kappa,\theta\right),1}:=\sup_{\left(v,s,m,t,\tau\right)\in\Delta_{5}}\frac{\left|\delta_{m}(\varXi^{\tau}_{v})_{ts}\right|}{\left[\left|\tau-t\right|^{-\kappa}\left|t-s\right|^{\beta}\left|s-v\right|^{-\theta}\right]\wedge \left|\tau-v\right|^{\beta-\kappa-\theta}},
\end{flalign}
and 
\begin{flalign}\label{1,2 delta norm}
	\|\varXi\|_{\left(\alpha,\gamma,\eta\right),1,2}&:=\sup_{\substack{\left(v,s,t,\tau^\prime,\tau\right)\in\Delta_{5} \\ \bar{\eta}\in[0,1],\zeta\in[0,\alpha-\gamma-\eta)}}\frac{\left|(\varXi^{\tau\tau^\prime}_{v})_{ts}\right|}{f(v,s,t,\tau^\prime,\tau,\alpha,\gamma,\eta)}, \\
\|\delta\varXi\|_{\left(\beta,\kappa,\theta\right),1,2}&:=\sup_{\substack{\left(v,s,m,t,\tau^\prime,\tau\right)\in\Delta_{6} \\ \bar{\eta}\in[0,1],\zeta\in[0,\beta-\kappa-\theta)}}\frac{\left|\delta_{m}(\varXi^{\tau\tau^\prime}_{v})_{ts}\right|}{f(v,s,t,\tau^\prime,\tau,\beta,\kappa,\theta)},
\end{flalign}
where the function $f$ is given by 
\begin{equs}\label{304}
f(v,s,t,\tau^\prime,\tau,\beta,\kappa,\theta) & =\left|\tau-\tau^{\prime}\right|^{\bar{\eta}}\left|\tau^{\prime}-t\right|^{-\bar{\eta}+\zeta} \times \\ & \times \left(\left[\left|\tau^{\prime}-t\right|^{-\kappa-\zeta}\left|t-s\right|^{\beta}\left|s-v\right|^{-\theta}\right]\wedge \left|\tau^{\prime}-v\right|^{\beta-\kappa-\theta-\zeta}\right).
\end{equs}
Notice that we will use $\mathcal{V}^{\left(\alpha,\gamma, \eta\right)\left(\beta,\kappa,\theta\right)}$
as a space of abstract Volterra integrands with a double singularity. 

\end{definition}

We are now ready to state the Volterra Sewing Lemma with two singularities (see \cite[Lem. 3.2]{HT2}).

\begin{lemma}[Volterra Sewing Lemma]\label{Volterra Sewing Lemma}

We consider the following  exponents $(\alpha,\gamma, \eta)$, and $(\beta,\kappa,\theta)$, with $\beta\in (1,\infty)$, $(\kappa+\theta)\in (0,1)$, $(\gamma+\eta)\in (0,1)$, $\alpha\in\left(0,1\right)$ and $\gamma\in(0,1)$ such that $\alpha-\gamma >0$.  Let $\mathcal{V}^{(\alpha,\gamma,\eta)(\beta,\kappa,\theta)}$  and $\mathcal{V}^{\left(\alpha,\gamma\right)}$ be the spaces given in Definition \ref{new abstract integrnds space} and Definition \ref{Volterra space} respectively. Then, there exists a linear continuous map $\mathcal{I}:\mathcal{V}^{(\alpha,\gamma, \eta)(\beta,\kappa,\theta)}\left(\Delta_{4};\R^{d}\right)\rightarrow\mathcal{V}^{\left(\alpha,\gamma\right)}\left(\Delta_{3};\R^{d}\right)$
such that the following hold true.
\begin{itemize}
\item[(i)]
The quantity
\begin{equs}
\mathcal{I}(\varXi^{\tau}_{v})_{ts}:=\lim_{|\mathcal{P}|\rightarrow 0} \sum_{[u,w]\in\mathcal{P}} (\varXi^{\tau}_{v})_{wu}
\end{equs}
exists for all $(v,s,t,\tau)\in \Delta_{4}$, where $\mathcal{P}$  is a generic partition of $[s,t]$  and $|\mathcal{P}|$  denotes the mesh size of the partition.  Furthermore, we define $\mathcal{I}(\varXi^{\tau}_{v})_{t}:=\mathcal{I}(\varXi^{\tau}_{v})_{t0}$, and have $\mathcal{I}(\varXi^{\tau}_{v})_{ts}=\mathcal{I}(\varXi^{\tau}_{v})_{t0}-\mathcal{I}(\varXi^{\tau}_{v})_{s0}$.
\item[(ii)] For all $(v,s,t,\tau)\in \Delta_{4}$ we have 
\begin{equs}
|\mathcal{I}\left(\varXi^{\tau}_{v}\right)_{ts}-(\varXi^{\tau}_{v})_{ts}|\lesssim & \|\delta\varXi\|_{(\beta,\kappa,\theta),1}\left(\left[\left|\tau-t\right|^{-\kappa}\left|t-s\right|^{\beta}\left|s-v\right|^{-\theta}\right]\wedge\left|\tau-v\right|^{\beta-\kappa-\theta}\right),
\end{equs} 
while for $(v,s,t,\tau^{\prime},\tau)\in \Delta_{5}$ we get
\begin{equs}
\left|\mathcal{I}(\varXi^{\tau\tau^\prime}_{v})_{ts}-(\varXi^{\tau\tau^\prime}_{v})_{ts}\right|\lesssim  \|\delta\varXi\|_{\left(\beta,\kappa,\theta\right),1,2}f(v,s,t,\tau^{\prime},\tau),
\end{equs}
where $f$ is the function given by \eqref{304}.
\end{itemize}
\end{lemma}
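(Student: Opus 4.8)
The plan is to establish this as a Volterra analogue of the classical sewing lemma, in the spirit of the constructions of Gubinelli and of Feyel--de la Pradelle, the only new ingredients being the bookkeeping forced by the singular weights $|\tau-t|^{-\kappa}$ and $|s-v|^{-\theta}$ (together with their two-parameter counterpart encoded in $f$) and the wedge structure of the norms. Throughout, $v$ is kept as a fixed base point and every estimate is uniform in it. First I would fix $(v,s,t,\tau)\in\Delta_4$, and for a partition $\mathcal{P}=(s=u_0<\cdots<u_N=t)$ of $[s,t]$ set $\mathcal{I}^{\mathcal{P}}(\varXi^\tau_v)_{ts}:=\sum_{i=0}^{N-1}(\varXi^\tau_v)_{u_{i+1}u_i}$. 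Deleting an interior point $u_i$ changes this sum by exactly $-\delta_{u_i}(\varXi^\tau_v)_{u_{i+1}u_{i-1}}$, which by the very definition of $\|\delta\varXi\|_{(\beta,\kappa,\theta),1}$ is bounded by $\|\delta\varXi\|_{(\beta,\kappa,\theta),1}\bigl(|\tau-u_{i+1}|^{-\kappa}|u_{i+1}-u_{i-1}|^{\beta}|u_{i-1}-v|^{-\theta}\wedge|\tau-v|^{\beta-\kappa-\theta}\bigr)$.

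For the convergence asserted in (i) I would use the usual ``greedy point removal'': by a pigeonhole argument $\sum_{i}(u_{i+1}-u_{i-1})\le 2|t-s|$, so at each stage there is an interior point whose enclosing interval $[u_{i-1},u_{i+1}]$ has length $\le \tfrac{2}{N-1}|t-s|$, and since the two singular weights are monotone in the relevant variable one has $|\tau-u_{i+1}|^{-\kappa}\le|\tau-t|^{-\kappa}$ and $|u_{i-1}-v|^{-\theta}\le|s-v|^{-\theta}$. Removing points one at a time until the trivial partition remains then gives $|\mathcal{I}^{\mathcal{P}}(\varXi^\tau_v)_{ts}-(\varXi^\tau_v)_{ts}|\lesssim \|\delta\varXi\|_{(\beta,\kappa,\theta),1}\,|\tau-t|^{-\kappa}|t-s|^{\beta}|s-v|^{-\theta}$, the geometric-type series over the removal steps converging precisely because $\beta>1$, and the bound being uniform over all partitions of $[s,t]$. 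Comparing two partitions with their common refinement via the same estimate applied on sub-intervals shows $\mathcal{P}\mapsto\mathcal{I}^{\mathcal{P}}(\varXi^\tau_v)_{ts}$ is Cauchy as $|\mathcal{P}|\to0$, so the limit exists and does not depend on the approximating sequence; the identity $\mathcal{I}(\varXi^\tau_v)_{t0}-\mathcal{I}(\varXi^\tau_v)_{s0}=\mathcal{I}(\varXi^\tau_v)_{ts}$ follows because partitions of $[0,t]$ containing $s$ are cofinal and split the Riemann sum over $[0,s]$ and $[s,t]$.

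Passing to the limit in the estimate above yields the first inequality of (ii). For the two-parameter one I would observe that, by linearity, $\mathcal{I}(\varXi^{\tau\tau'}_v)_{ts}=\lim_{|\mathcal{P}|\to0}\sum_{[u,w]\in\mathcal{P}}(\varXi^{\tau\tau'}_v)_{wu}$ and run the same greedy argument with the second-order defect $\delta_m(\varXi^{\tau\tau'}_v)_{ts}$, which $\|\delta\varXi\|_{(\beta,\kappa,\theta),1,2}$ controls by $f(v,s,t,\tau',\tau,\beta,\kappa,\theta)$ (the auxiliary parameters $\bar\eta\in[0,1]$ and $\zeta\in[0,\beta-\kappa-\theta)$ being frozen throughout); since $f$ carries the factor $|t-s|^{\beta}$ with $\beta>1$ and is monotone in its remaining arguments in the same way as before, the iteration converges and produces $|\mathcal{I}(\varXi^{\tau\tau'}_v)_{ts}-(\varXi^{\tau\tau'}_v)_{ts}|\lesssim\|\delta\varXi\|_{(\beta,\kappa,\theta),1,2}\,f(v,s,t,\tau',\tau)$. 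Finally, writing $(\mathcal{I}\varXi)^\tau_{st}=\mathcal{I}(\varXi^\tau_v)_{ts}=(\varXi^\tau_v)_{ts}+\bigl(\mathcal{I}(\varXi^\tau_v)_{ts}-(\varXi^\tau_v)_{ts}\bigr)$, bounding the first summand by $\|\varXi\|_{(\alpha,\gamma,\eta)}$ and the second by (ii), and comparing the weights on the bounded interval $[0,T]$ using $\beta>1>\alpha$, shows that the seminorms $\|\cdot\|_{(\alpha,\gamma),1}$ and $\|\cdot\|_{(\alpha,\gamma),1,2}$ of $\mathcal{I}\varXi$ are finite and dominated by $\|\varXi\|_{\mathcal{V}^{(\alpha,\gamma,\eta)(\beta,\kappa,\theta)}}$; linearity and continuity of $\mathcal{I}$ are then immediate.

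The step I expect to be the real obstacle is upgrading the one-sided estimate sketched above to the full minimum present in the norms: the second alternative $|\tau-v|^{\beta-\kappa-\theta}$ cannot be summed naively over the removed points, so one has to split the range of $(v,s,t,\tau)$ into an ``off-diagonal'' regime, where $|t-s|\lesssim|\tau-t|\wedge|s-v|$ and the already-proved bound implies the second alternative, and a ``near-diagonal'' regime, where the partition points are grouped by dyadic distance to the singular hyperplanes $\{\tau=t\}$ and $\{s=v\}$ and summed scale by scale. Carrying out this dichotomy carefully for both the $(\cdot,1)$ and the $(\cdot,1,2)$ pieces, and for $\varXi$ as well as $\delta\varXi$, constitutes the bulk of the work; everything else is a routine transcription of the classical sewing argument with the singular Volterra weights carried along.
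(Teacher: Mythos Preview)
The paper does not actually prove this lemma: it is merely stated as Lemma~\ref{Volterra Sewing Lemma} and attributed to \cite[Lem.~3.2]{HT2}, so there is no ``paper's own proof'' to compare against. Your sketch follows the standard sewing-lemma template (greedy point removal, Cauchy comparison via common refinements, additivity from cofinal partitions) and correctly identifies that the only genuinely new difficulty over the classical argument is reconciling the two branches of the minimum in the $(\beta,\kappa,\theta)$ norm, which you propose to handle by an off-diagonal/near-diagonal dichotomy with dyadic summation near the singular hyperplanes; this is indeed the approach taken in \cite{HT,HT2}, so your outline is on the right track and consistent with the cited source.
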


We also have the following theorem, which was proven in \cite{HT}.

\begin{theorem}\label{thm:Regularity of Volterra path} 
Let $x\in\mathcal{C}^{\alpha}$ and $k$ be a Volterra kernel of order $-\gamma$ satisfying \ref{condition_k}, such that $\rho=\alpha-\gamma>0$. We define an element $\varXi_{ts}^{\tau}=k(\tau,s)x_{ts}$. Then the following holds true:

\begin{enumerate}[wide, labelwidth=!, labelindent=0pt, label=(\roman*)]

\item
There exists some coefficients $\beta > 1$ and $\kappa >0$ with $\beta-\kappa=\alpha-\gamma$ such that
$\varXi \in \mathcal{V}^{(\alpha,\gamma)(\beta,\kappa)}$, where $\mathcal{V}^{(\alpha,\gamma)(\beta,\kappa)}$  is given in Definition \ref{new abstract integrnds space} where there is no dependency on $ v $ that is $ \Xi_v = \Xi $  Therefore, we can drop the extra parameters $ \eta $ and $ \theta $. It follows that the element $\mathcal{I}\left(\varXi^{\tau}\right)$  obtained in  Lemma \ref{Volterra Sewing Lemma}, the version with no dependency on $ v $, is well defined as an element of $\mathcal{V}^{(\alpha,\gamma)}$ and we set $z_{ts}^{\tau}\equiv\mathcal{I}\left(\varXi^{\tau}\right)_{ts}=\int_{s}^{t}k(\tau,r)dx_{r}$. 

\item
For $(s,t,\tau)\in \Delta_{3}$ $z$ satisfies the bound  
\begin{equation*}
\left|z_{ts}^{\tau}-k(\tau,s)x_{ts}\right|\lesssim  \left[\left|\tau-t\right|^{-\gamma}\left|t-s\right|^{\alpha}\right]\wedge \left|\tau-s\right|^\rho,  
\end{equation*}
and in particular it holds that $|z|_{(\alpha,\gamma),1}<\infty$.

\item
For any $\eta \in [0,1]$  and  any $(s,t,q,p)\in \Delta_{4}$  we have 
\begin{equation*}
\left|z_{ts}^{pq}\right|\lesssim \left|p-q\right|^{\eta}\left|q-t\right|^{-\eta+\zeta}\left(\left[\left|q-t\right|^{-\gamma-\zeta}\left|t-s\right|^{\alpha}\right]\wedge \left|q-s\right|^{\rho-\zeta}\right),
\end{equation*}
where $z_{ts}^{pq}=z^{p}_{t}-z^{q}_{t}-z_{s}^{p}+z_{s}^{q}$. In particular it holds that $|z|_{(\alpha,\gamma),1,2}<\infty$.

\end{enumerate}
\end{theorem}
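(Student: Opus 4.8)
The plan is to realise $z$ as the image under the Volterra Sewing Lemma (Lemma~\ref{Volterra Sewing Lemma}) of the germ $\varXi^{\tau}_{ts}=k(\tau,s)x_{ts}$, and then to obtain parts (ii) and (iii) of the theorem directly from the quantitative bounds in part (ii) of that lemma. So the only genuine work is part (i): one must exhibit exponents $\beta>1$, $\kappa>0$ with $\beta-\kappa=\rho$ for which $\varXi$ lies in $\mathcal{V}^{(\alpha,\gamma)(\beta,\kappa)}$, in the $v$-independent form of Definition~\ref{new abstract integrnds space} (i.e.\ with $\eta=\theta=0$); once that is in place, the rest is bookkeeping.

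For the large-scale part of the norm, $\|\varXi\|_{(\alpha,\gamma)}<\infty$ is immediate from $|k(\tau,s)|\lesssim|\tau-s|^{-\gamma}$, $|x_{ts}|\lesssim|t-s|^{\alpha}$ and the trivial inequalities $|\tau-s|\ge|\tau-t|$, $|\tau-s|\ge|t-s|$; for the double-difference part one writes $\varXi^{\tau\tau'}_{ts}=(k(\tau,s)-k(\tau',s))x_{ts}$, applies the second line of \eqref{condition_k} with its free exponent set equal to the norm parameter $\bar\eta$, trades $|\tau'-s|^{-\gamma-\bar\eta}$ for $|\tau'-t|^{-\gamma-\bar\eta}$, and treats the two branches of the $\wedge$ in $f$ separately. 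The delicate estimate is the one on $\delta\varXi$. Using $x_{ts}=x_{tm}+x_{ms}$ one gets the telescoping identities
\begin{equs}
\delta_{m}\varXi^{\tau}_{ts}=\bigl(k(\tau,s)-k(\tau,m)\bigr)x_{tm},\qquad
\delta_{m}\varXi^{\tau\tau'}_{ts}=\bigl(k(\tau,s)-k(\tau,m)-k(\tau',s)+k(\tau',m)\bigr)x_{tm},
\end{equs}
so the behaviour of $\delta\varXi$ near the two singular surfaces is controlled exactly by the increments of $k$. Feeding the first and third lines of \eqref{condition_k} into the first identity (together with $m\in(s,t)$) yields $\|\delta\varXi\|_{(\beta,\kappa),1}<\infty$ with $\beta=\alpha+\eta$, $\kappa=\gamma+\eta$; the requirements $\beta>1$, $\kappa<1$ then force $\eta\in(1-\alpha,\,1-\gamma)$, an interval nonempty precisely because $\rho>0$. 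The hard part, and the main obstacle of the whole argument, is $\|\delta\varXi\|_{(\beta,\kappa),1,2}<\infty$: here one inserts the last two, mixed, lines of \eqref{condition_k}, which estimate $k(\tau,s)-k(\tau,m)-k(\tau',s)+k(\tau',m)$, into the second identity and checks that the outcome is dominated by the template $f(s,t,\tau',\tau,\beta,\kappa)$ for every admissible value of the free parameters $\bar\eta,\zeta$, which forces one to interpolate in the kernel exponent and again split the two branches of the $\wedge$. This is the verification carried out in \cite{HT}. Once it is done, Lemma~\ref{Volterra Sewing Lemma} applies, $\mathcal{I}(\varXi^{\tau})\in\mathcal{V}^{(\alpha,\gamma)}$, and $\mathcal{I}(\varXi^{\tau})_{ts}=\lim_{|\mathcal{P}|\to0}\sum_{[u,w]\in\mathcal{P}}k(\tau,u)x_{wu}$ is by construction the Volterra--Young integral $\int_{s}^{t}k(\tau,r)\,dx_{r}$, which is what we call $z^{\tau}_{ts}$.

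Part (ii) is then immediate: the first estimate in Lemma~\ref{Volterra Sewing Lemma}(ii) gives $|z^{\tau}_{ts}-k(\tau,s)x_{ts}|=|\mathcal{I}(\varXi^{\tau})_{ts}-\varXi^{\tau}_{ts}|\lesssim[|\tau-t|^{-\kappa}|t-s|^{\beta}]\wedge|\tau-s|^{\rho}$, and because $\kappa-\gamma=\beta-\alpha$ and the interval $[0,T]$ is bounded, a short case distinction (compare the first terms when $|t-s|\le|\tau-t|$; note that $[|\tau-t|^{-\gamma}|t-s|^{\alpha}]\wedge|\tau-s|^{\rho}=|\tau-s|^{\rho}$ when $|t-s|\ge|\tau-t|$) upgrades this to the bound stated with exponents $(\alpha,\gamma)$; since $|k(\tau,s)x_{ts}|$ obeys the same bound, also $|z|_{(\alpha,\gamma),1}<\infty$. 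For part (iii), $z^{pq}_{ts}$ is the $(\tau,\tau')$ double difference of $z=\mathcal{I}(\varXi)$ evaluated at $(\tau,\tau')=(p,q)$, and the second estimate in Lemma~\ref{Volterra Sewing Lemma}(ii) controls its deviation from $\varXi^{pq}_{ts}=(k(p,s)-k(q,s))x_{ts}$ by $\|\delta\varXi\|_{(\beta,\kappa),1,2}\,f(s,t,q,p,\beta,\kappa)$; combining this with $\|\varXi\|_{(\alpha,\gamma),1,2}<\infty$ from (i) and the same elementary passage from exponents $(\beta,\kappa)$ to $(\alpha,\gamma)$ as in (ii) gives $|z^{pq}_{ts}|\lesssim f(s,t,q,p,\alpha,\gamma)$, which for $\bar\eta=\eta$ is exactly the quantity in the statement; in particular $|z|_{(\alpha,\gamma),1,2}<\infty$.
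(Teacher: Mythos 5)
Your proposal is correct and follows the same route as the paper, which for this theorem gives no argument of its own but simply cites \cite{HT}: one checks that the germ $\varXi^{\tau}_{ts}=k(\tau,s)x_{ts}$ lies in the $v$-independent space $\mathcal{V}^{(\alpha,\gamma)(\beta,\kappa)}$ using the identities $\delta_{m}\varXi^{\tau}_{ts}=(k(\tau,s)-k(\tau,m))x_{tm}$ and its two-parameter analogue together with the kernel bounds \eqref{condition_k}, and then reads (ii) and (iii) off the quantitative estimates of the Volterra Sewing Lemma after the elementary passage from exponents $(\beta,\kappa)$ to $(\alpha,\gamma)$. The only caveat is that, like the paper, you ultimately defer the one genuinely technical verification, namely $\|\delta\varXi\|_{(\beta,\kappa),1,2}<\infty$, to \cite{HT} rather than carrying it out.
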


A next step now is to find the right spaces on which to define the $\star$ operation so that we may abstract from the notion of a collection of iterated Volterra integrals that of a Volterra-type Rough Path. However, even with the appropriate abstract function spaces at hand, the definition of the product will still pose a challenge: defining the resulting expression in terms of the kernels involved as one big iterated integral is conceptually straightforward when our functions are indeed expressible as such integrals; but defining the product on an abstract function space in a way that generalizes these cases of interest is a delicate matter, as we shall see.
 We begin by giving the definition of the convolution product for increments. We recall \cite[Thm. 26]{HT}:
 
 \begin{proposition} \label{convolution_init}
Let $ z  \in \mathcal{V}^{(\alpha,\gamma)}(\Delta_2, \R) $ and $ y \in \mathcal{V}^{(\alpha,\gamma)}(\Delta_2,\R^{e}) $. We set $\varrho = \alpha - \gamma$ and assume that $ \varrho > 0 $. Then, we define the convolution product between $ z $ and $y  $ as:
\begin{equs}
z_{tu}^{\tau} \star y_{us}^{\cdot} := \lim_{|\mathcal{P}| \rightarrow 0 } \sum_{[u',v'] \in \mathcal{P}} z^{\tau}_{v'u'} y^{u'}_{us}
\end{equs} 
where $ (s,t,u,\tau) \in \Delta_4 $. Moreover, one gets the following inequalities:
\begin{equs}
| z^{\tau}_{tu} \star y^{\cdot}_{us} | & \lesssim 
\Vert z \Vert_{(\alpha,\gamma),1} \Vert y \Vert_{(\alpha.\gamma),1,2} \left( [  |\tau - t |^{-\gamma} | t-s |^{2 \varrho + \gamma}   ] \wedge |\tau - s|^{2 \varrho} \right) \\
| z^{\tau' \tau}_{tu} \star y^{\cdot}_{us} | & \lesssim 
\Vert z \Vert_{(\alpha,\gamma),1,2} \Vert y \Vert_{(\alpha.\gamma),1,2} | \tau' -\tau|^{\eta} | \tau - t |^{-\eta + \zeta} \\ &  \left( [  |\tau - t |^{-\gamma} | t-s |^{2 \varrho + \gamma}   ] \wedge |\tau - s|^{2 \varrho-\zeta} \right) 
\end{equs}
where $ \eta \in [0,1] $, $\zeta \in [0,2 \varrho]$ and $ (s,u,t,\tau,\tau') \in \Delta_5 $.
 \end{proposition}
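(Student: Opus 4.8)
The plan is to realise the limit defining $z^\tau_{tu}\star y^\cdot_{us}$ as an application of the Volterra Sewing Lemma (Lemma~\ref{Volterra Sewing Lemma}) to an abstract integrand built out of $z$ and $y$, and then to read off the two quantitative estimates from part~(ii) of that lemma together with a direct bound on the integrand itself.

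Fix $(s,u,t,\tau)\in\Delta_4$. For $p<q$ in $[u,t]$ set $(\varXi^\tau)_{qp}:=z^\tau_{qp}\,y^p_{us}$, so that the Riemann sum appearing in the statement equals $\sum_{[u',v']\in\mathcal P}(\varXi^\tau)_{v'u'}$ over partitions $\mathcal P$ of $[u,t]$; here $\tau$ plays the role of the singular upper parameter, while $s$ and $u$ enter as the fixed data sitting below the integration interval, in the sense of Definitions~\ref{Volterra space} and~\ref{new abstract integrnds space}. Since $z^\tau$ is additive in its lower argument, $z^\tau_{qp}=z^\tau_{qm}+z^\tau_{mp}$, one obtains for $p<m<q$
\[
\delta_m(\varXi^\tau)_{qp}=z^\tau_{qm}\bigl(y^p_{us}-y^m_{us}\bigr)=z^\tau_{qm}\,y^{mp}_{us},
\]
where $y^{mp}_{us}$ is, up to sign, precisely the mixed second difference of $y$ controlled by $\|y\|_{(\alpha,\gamma),1,2}$. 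The analogous computation for the two--upper--variable object $(\varXi^{\tau\tau'})_{qp}:=z^{\tau\tau'}_{qp}\,y^p_{us}$ gives $\delta_m(\varXi^{\tau\tau'})_{qp}=z^{\tau\tau'}_{qm}\,y^{mp}_{us}$.

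The substantive step is to check that $\varXi$ and $\varXi^{\tau\tau'}$ belong to $\mathcal V^{(\alpha,\gamma,\eta)(\beta,\kappa,\theta)}$ for an admissible choice of the auxiliary exponents with $\beta>1$, the relevant norms being bounded by $\|z\|_{(\alpha,\gamma),1}\,\|y\|_{(\alpha,\gamma)}$ and $\|z\|_{(\alpha,\gamma),1,2}\,\|y\|_{(\alpha,\gamma)}$ respectively. For the $(\alpha,\gamma,\eta)$-type parts of the norm one inserts the defining bounds of $\|z\|_{(\alpha,\gamma),1}$ (resp.\ $\|z\|_{(\alpha,\gamma),1,2}$) and of $\|y\|_{(\alpha,\gamma),1}$ into $|(\varXi^\tau)_{qp}|\le|z^\tau_{qp}|\,|y^p_{us}|$ and matches the two regimes of Definition~\ref{new abstract integrnds space}. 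For the $\delta$-part one uses the formula above: estimating $|z^\tau_{qm}|$ by its near--diagonal bound $|\tau-q|^{-\gamma}|q-m|^{\alpha}$ and $|y^{mp}_{us}|$ by $\|y\|_{(\alpha,\gamma),1,2}$ with a power of $|m-p|$ as large as that norm permits, then using $q-m\le q-p$ and $m-p\le q-p$, produces a bound with a power of $|q-p|$ strictly larger than $1$, of the shape demanded by \eqref{1 delta norm} and \eqref{1,2 delta norm}. The delicate point is to assign the base point $v$ of the Volterra Sewing Lemma so that the singularity of the factor $y^p_{us}$ as $p$ approaches the splitting point $u$ is absorbed into the admissible weight involving $v$ in the integrand norm, while keeping simultaneously $\beta>1$, $\kappa+\theta\in(0,1)$ and $\gamma+\eta\in(0,1)$; this is a routine but lengthy optimisation over the free Hölder parameters occurring in the $(1,2)$-type norms.

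Granting this, Lemma~\ref{Volterra Sewing Lemma}(i) yields the existence of $z^\tau_{tu}\star y^\cdot_{us}=\mathcal I(\varXi^\tau)_{tu}$ for every $(s,u,t,\tau)\in\Delta_4$. For the first inequality, decompose $\mathcal I(\varXi^\tau)_{tu}=(\varXi^\tau)_{tu}+\bigl(\mathcal I(\varXi^\tau)_{tu}-(\varXi^\tau)_{tu}\bigr)$: the leading term $(\varXi^\tau)_{tu}=z^\tau_{tu}\,y^u_{us}$ is bounded directly, using the defining bounds of $\|z\|_{(\alpha,\gamma),1}$ and $\|y\|_{(\alpha,\gamma),1}$, by a constant times $\bigl([\,|\tau-t|^{-\gamma}|t-u|^{\alpha}\,]\wedge|\tau-u|^{\varrho}\bigr)\,|u-s|^{\varrho}$, which is dominated by the claimed right--hand side because $|t-u|,|u-s|\le|t-s|$, $|\tau-u|\le|\tau-s|$ and $\alpha+\varrho=2\varrho+\gamma$; the remainder is controlled by part~(ii) of the lemma together with the bound on $\|\delta\varXi\|$ established above, which with the chosen exponents again produces a contribution of the required size (using once more $|t-u|\le|t-s|$). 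The second inequality is obtained in the same manner from the two--upper--variable objects, via the second estimate in Lemma~\ref{Volterra Sewing Lemma}(ii) and the bound on $\|\delta\varXi^{\tau\tau'}\|$. The main obstacle is the integrand--space membership of the previous paragraph — controlling the interaction of the regularity of $z$ and of $y$ at the diagonal and at the splitting point simultaneously; the rest is bookkeeping, and the decomposition underlying the Riemann sums is the rough counterpart of the domain decomposition already used in the proof of Proposition~\ref{propchen}.
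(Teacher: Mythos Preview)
Your approach is correct and is exactly the route taken in the reference the paper cites: this proposition is not proved in the paper itself but is recalled as \cite[Thm.~26]{HT}, where it is established precisely by defining the germ $(\varXi^\tau)_{qp}=z^\tau_{qp}\,y^p_{us}$, computing $\delta_m\varXi^\tau_{qp}=z^\tau_{qm}(y^p_{us}-y^m_{us})$, and feeding the resulting bounds into the Volterra Sewing Lemma. The decomposition $\mathcal I(\varXi^\tau)_{tu}=(\varXi^\tau)_{tu}+\bigl(\mathcal I(\varXi^\tau)_{tu}-(\varXi^\tau)_{tu}\bigr)$ and the separate treatment of the $\tau\tau'$ increment are also the argument used there.

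One small correction: there is no singularity of $y^p_{us}$ as $p\to u$, since the second branch of the $\|\cdot\|_{(\alpha,\gamma),1}$ bound gives $|y^p_{us}|\lesssim|p-s|^{\varrho}$, which is bounded on $[u,t]$. The place where a lower singularity genuinely appears is in the $\delta$-bound, through the factor $|p-u|^{-\eta+\zeta}$ coming from $\|y\|_{(\alpha,\gamma),1,2}$ when one extracts a positive power $|m-p|^\eta$; it is this term that forces one either to balance $\eta$ against $\zeta$ or to invoke the two-singularity version of the Sewing Lemma. Your identification of this as the delicate point is right, only the location of the difficulty is slightly misstated.
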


We will need to extend the convolution product $\star $ to functions $y$ with more variables. To this end, we introduce the spaces $\mathcal{V}^{(\alpha,\gamma)}_{n}$ of functions $y$ defined on the hypercube $Q_{n+1}$. The norms of these spaces serve to control the increments of each variable.

\begin{definition}
We define the space $\mathcal{V}^{(\alpha,\gamma)}_{n}(Q_{n+1},\R^e)$ to be the space of functions $y: Q_{n+1} \rightarrow \R^{e}$ such that $ y_{0}^{r_1,...,r_n} = y_0 $ and
\begin{equs}
||| y |||_{(\alpha, \gamma), n} = \sum_{k=1}^{n} ||y||_{(\alpha, \gamma), n, k} < \infty
\end{equs}
where, for every $k \leq n$, we define the norms
\begin{equs}
||y||_{(\alpha, \gamma), n, k} = \underset { \underset{\eta \in [0,1], \zeta \in [0,\alpha-\gamma)}{(s,t, r_{1}, ..., r, u ,..., r_{n}) \in Q_{n+2}} }{\sup} \ \ \frac{|y_{ts}^{r_{1}, ..., r, ..., r_{n}} - y_{ts}^{r_{1}, ..., u, ..., r_{n}}|}{h_{\eta,\zeta}(s,t,r_1,..,r,u,...,r_n)}
\end{equs}
where 
\begin{equs}
& h_{\eta,\zeta}(s,t,r_1,..,r,u,...,r_n)  = |r-u|^{\eta} |\min(r_1,..,r,u,...,r_n) - t|^{-\eta + \zeta} \\ & \times \left(  | \min(r_1,..,r,u,...,r_n) - t |^{-\gamma-\zeta} |t-s|^{\alpha}
\wedge | \min(r_1,..,r,u,...,r_n) - s |^{\alpha-\gamma-\zeta}
\right).
\end{equs}
Here, the values $r$ and $u$ appearing as superscripts in the expression for the numerator are the values of the $k$-th variable of $y$. 
\end{definition}

\begin{remark}
Note that the constituent norms introduced above are similar to the norm $||\cdot||_{(\alpha, \gamma), 1, 2}$
\end{remark}

\vspace{5pt}

We now introduce the family of spaces and the corresponding convolution product. They will be defined recursively.

\vspace{5pt}

\begin{definition}\label{spaces}

Let $\alpha, \gamma \in (0,1)$ with $ \rho = \alpha - \gamma > 0$ and $n \leq 1/\alpha$ be fixed. We define a tree-indexed family of spaces $\mathcal{V}^{h, \alpha, \gamma}$ and a family of products
$$
(\textbf{z}, y) \in \mathcal{V}^{h, \alpha, \gamma} \times \mathcal{V}^{\alpha, \gamma}_{|h|} \ \ \mapsto \ \ \textbf{z} \star y
$$
recursively, by first setting for the tree with one vertex:
$$
\mathcal{V} ^{\bm{\bm{\bm{\bm{\cdot}}}}, \alpha, \gamma} := \mathcal{V} ^{(\alpha, \gamma)}
$$
and defining the product $\star$ as
\begin{equs}
z_{tu}^{\tau} \star y_{us}^{\cdot} := \lim_{|\mathcal{P}| \rightarrow 0 } \sum_{[u',v'] \in \mathcal{P}} z^{\tau}_{v'u'} y^{u'}_{us}
\end{equs} 
where $ (s,t,u,\tau) \in \Delta_4 $ and convergence is guaranteed by Proposition \ref{convolution_init} in this case. 
We then define for any $h \in \mathcal{T}$ with $|h| \leq n \leq 1/\alpha$ the space $\mathcal{V}^{h, \alpha, \gamma} \subset \mathcal{V}^{(|h|\rho + \gamma, \gamma)}$  consisting of all $z^{h} \in \mathcal{V}^{(|h|\rho + \gamma, \gamma)}$ such that 
$$
\delta_{u} \mathbf{z}^{h}_{ts} = \sum_{(h)'} \mathbf{z}^{h^{(1)}}_{tu} \star  \mathbf{z}^{h^{(2)}}_{us}
$$
for some functions
$$
\mathbf{z}^{h^{(1)}} \in \mathcal{V}^{h^{(1)}, \alpha, \gamma}, \ \ \ \ \mathbf{z}^{h^{(2)}} \in \bigotimes_{i=1}^{m} \mathcal{V}^{h^{(2)}_{i}, \alpha, \gamma} 
$$
with $h^{(2)} = h^{(2)}_{1} \cdot ... \cdot h^{(2)}_{m}$ and such that, for every $y \in \mathcal{V}^{(\alpha,\gamma)}_{n}$, one has
\begin{equs} \label{chen2}
\delta_{u} \textbf{z}^{h,\tau}_{ts} \star y_s = \sum_{(h)'} \left( \textbf{z}^{h^{(1)},\tau}_{tu} \star \left(  \textbf{z}^{h^{(2)}}_{us} \star  y_s \right) \right)
\end{equs}
We define the product $(\textbf{z}, y) \in \mathcal{V}^{h, \alpha, \gamma} \times \mathcal{V}^{\alpha, \gamma}_{|h|} \ \ \mapsto \ \ \textbf{z} \star y$ as follows:
\begin{equs}
\mathbf{z}^{h, \tau}_{ts} \star y_{s} := \lim_{|\mathcal{P}| \rightarrow 0} \sum_{[u,v] \in \mathcal{P}} \mathbf{z}^{h, \tau}_{vu} \otimes y^{u, ..., u}_{s} +  \sum_{(h)'}  \mathbf{z}^{h^{(1)}, \tau}_{vu} \star ( \mathbf{z}^{h^{(2)}, \tau}_{us} \star y_{s} )
\end{equs}
where we use the inductive hypothesis that the spaces $\star$ product has already been defined for $h$ of order less than or equal to $|h| - 1$. Convergence of the expression on the right is proven below, in Theorem \ref{Convolution Product}.
\end{definition}

\begin{remark}
The spaces $\mathcal{V}^{h, \alpha, \gamma}$ are metric subspaces of the corresponding $\mathcal{V}^{(|h|\rho + \gamma, \gamma)}$ space. However, they do not appear to be linear subspaces.
\end{remark}

\begin{remark} The identity \eqref{chen2} can be viewed as an extension of the Chen's relation where the terms $\textbf{z}_{ts}^{h}$ are considered as operators. This property appears in the smooth case, as can be seen by the proof of Proposition~\ref{associativity_smooth}.
\end{remark}

 We now proceed to prove convergence of the defining expression for $\star$ by using the Volterra Sewing Lemma \ref{Volterra Sewing Lemma} in an essential way.

\begin{theorem}\label{Convolution Product}

Let $h$ be a tree in $ \CT_n $ and let $\mathbf{z}^{h} \in \mathcal{V}^{h, \alpha, \gamma}$ and  $y  \in \mathcal{V}^{(\alpha,\gamma)}_{|h|}$ with $a, \gamma \in (0,1)$ and $\rho = a - \gamma >0$. Then, for all fixed $(s, t, \tau) \in \Delta_{3}$ the expression
\begin{equs}
\mathbf{z}^{h, \tau}_{ts} \star y_{s} := \lim_{|\mathcal{P}| \rightarrow 0} \sum_{[u,v] \in \mathcal{P}} \mathbf{z}^{h, \tau}_{vu} \otimes y^{u, ..., u}_{s} +  \sum_{(h)'}  \mathbf{z}^{h^{(1)}, \tau}_{vu} \star ( \mathbf{z}^{h^{(2)}, \tau}_{us} \star y_{s} )
\end{equs}
yields a well-defined Volterra-Young integral. It follows that $\star$ is a well defined operation between the three-parameter Volterra function $\mathbf{z}^{h}$ and an $|h|$-parameter path $y$, linear in its second argument. Moreover, we have the following two inequalities: 
\begin{equs}\label{ineq1}
& |\mathbf{z}_{ts}^{h, \tau} \star \left( y_{s}^{\cdot, ..., \cdot} \ -  y_{s}^{s, ..., s} \right)| \lesssim \left( \sum_{ ((h))} \prod_{i=1}^k ||\mathbf{z}^{h^{(i)}}||_{(|h^{(i)}|\rho + \gamma, \gamma)} \right) \\ & \times |||y|||_{ (\alpha, \gamma), |h|} \times  
 \left( |\tau - t|^{-\gamma} |t-s|^{|h|\rho + \gamma} \wedge |\tau - s|^{|h|\rho} \right)
\end{equs}
\begin{equs}\label{ineq2}
& |\mathbf{z}_{ts}^{h, \tau' \tau} \star \left( y_{s}^{\cdot, ..., \cdot} - y_{s}^{s, ..., s} \right) |  \lesssim \left( \sum_{ ((h))} \prod_{i=1}^k ||\mathbf{z}^{h^{(i)}}||_{(|h^{(i)}|\rho + \gamma, \gamma)} \right) \\ & \times |||y|||_{ (\alpha, \gamma), |h|}  | \tau' -\tau|^{\eta} | \tau - t |^{-\eta + \zeta}  
 \left( |\tau - t|^{-\gamma} |t-s|^{|h|\rho + \gamma} \wedge |\tau - s|^{|h|\rho - \zeta} \right)
 \end{equs}
In these bounds, our notation of summing over $((h))$ means that the summation is over all elements $ h^{(1)} \otimes ... \otimes h^{(k)} $ that appear in the expansion of
\begin{equs}
\bar{\Delta} h = \sum_{k}  \Delta^k h , \quad \Delta^{i+1} = \left( \one \otimes \Delta^i \right) \tilde{\Delta}, \quad \Delta^{0} = \id.
\end{equs}
 It can also be understood as iterated admissible cuts.




\end{theorem}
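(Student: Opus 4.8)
The plan is to argue by induction on $|h|$, the number of decorated vertices of $h$, recognising at each step the partition sum defining $\mathbf{z}^{h,\tau}_{ts}\star y_s$ as the sewing of an explicit germ, to which the Volterra Sewing Lemma~\ref{Volterra Sewing Lemma} is then applied; the inequalities~\eqref{ineq1}--\eqref{ineq2} are read off from part~(ii) of that lemma. The base case $|h|=1$ is Proposition~\ref{convolution_init} together with the base clause of Definition~\ref{spaces}: there are no proper admissible cuts, $\mathbf{z}^h$ is additive in its lower argument, and $\mathbf{z}^{h,\tau}_{ts}\star y_s=\lim_{|\mathcal{P}|\to0}\sum_{[u,v]\in\mathcal{P}}\mathbf{z}^{h,\tau}_{vu}\otimes y^{u}_s$ is a Volterra--Young integral that converges, with the stated bounds, by the estimates recalled there.

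For the inductive step, fix $h$ with $1<|h|\le n\le 1/\alpha$ and assume the full statement (existence of $\star$, linearity in the second slot, and \eqref{ineq1}--\eqref{ineq2}) for all trees of strictly smaller size; note that $\alpha':=|h|\rho+\gamma<1$ and $\rho':=|h|\rho>0$, so that $\mathcal{V}^{(\alpha',\gamma)}$ is a legitimate target. For each $C\in\mathrm{Adm}(h)'$ write $h^{(1)}=R^C(h)$ and $h^{(2)}=\tilde{P}^{C}(h)=h^{(2)}_1\cdots h^{(2)}_m$, all of strictly smaller size. The inductive hypothesis applied to the factors $h^{(2)}_i$, together with the crossnorm property of $\|\cdot\|_{(\alpha,\gamma)}$ on $\bigotimes_i\mathcal{V}^{h^{(2)}_i,\alpha,\gamma}$ and the identification of the $m$ root-time variables of $h^{(2)}$ with the corresponding cut points of $h^{(1)}$, shows that the partial convolution $\mathbf{z}^{h^{(2)},\tau}_{us}\star y_s$ is a well-defined $|h^{(1)}|$-parameter Volterra path (an element of $\mathcal{V}^{(\alpha,\gamma)}_{|h^{(1)}|}$ with base point $s$) whose $|||\,\cdot\,|||_{(\alpha,\gamma),|h^{(1)}|}$-norm is $\lesssim\big(\prod_i\|\mathbf{z}^{h^{(2)}_i}\|\big)\,|||y|||$; applying the inductive hypothesis once more, now to $h^{(1)}$, the outer convolution $\mathbf{z}^{h^{(1)},\tau}_{vu}\star\bigl(\mathbf{z}^{h^{(2)},\tau}_{us}\star y_s\bigr)$ is itself well defined and estimated for every $(v,u)$. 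Hence the germ
\begin{equs}
(\Xi^\tau)_{vu}:=\mathbf{z}^{h,\tau}_{vu}\otimes y^{u,\dots,u}_s+\sum_{(h)'}\mathbf{z}^{h^{(1)},\tau}_{vu}\star\bigl(\mathbf{z}^{h^{(2)},\tau}_{us}\star y_s\bigr)
\end{equs}
is well defined, has $s$ as base point, and is linear in $y$.

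It remains to check that $\Xi$ belongs to the abstract integrand space $\mathcal{V}^{(\alpha',\gamma,\eta')(\beta,\kappa,\theta)}$ of Definition~\ref{new abstract integrnds space} for a suitable choice of exponents with $\beta>1$ and net second order $\beta-\kappa-\theta=|h|\rho$ --- the same trade-off already exploited in Theorem~\ref{thm:Regularity of Volterra path}, where $\beta>1$ is bought at the price of a stronger diagonal singularity. The first-order norms $\|\Xi\|_{(\alpha',\gamma,\eta'),1}$ and $\|\Xi\|_{(\alpha',\gamma,\eta'),1,2}$ follow from $\mathbf{z}^h\in\mathcal{V}^{(|h|\rho+\gamma,\gamma)}$ for the leading term and from the bounds on the nested convolutions obtained above. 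The crux is the coherence bound $\|\delta\Xi\|_{(\beta,\kappa,\theta)}<\infty$: for $u<m<v$ one expands
\begin{equs}
\delta_m(\Xi^\tau)_{vu}=\mathbf{z}^{h,\tau}_{vm}\otimes\bigl(y^{u,\dots,u}_s-y^{m,\dots,m}_s\bigr)+\bigl(\text{terms from }\delta_m\text{ of the correction sum}\bigr),
\end{equs}
using (a) the defining Chen relation $\delta_m\mathbf{z}^{h,\tau}_{vu}=\sum_{(h)'}\mathbf{z}^{h^{(1)},\tau}_{vm}\star\mathbf{z}^{h^{(2)}}_{mu}$ built into the membership $\mathbf{z}^h\in\mathcal{V}^{h,\alpha,\gamma}$, (b) the extended Chen identity~\eqref{chen2} for the lower-order objects $\mathbf{z}^{h^{(1)}}$, valid by the inductive hypothesis and applied inside the correction sum, and (c) the coassociativity of the plugging coproduct in the form $\Delta^{i+1}=(\one\otimes\Delta^i)\tilde\Delta$. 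After the cancellations forced by coassociativity, $\delta_m\Xi$ becomes a finite sum of nested convolutions indexed by the iterated admissible cuts $((h))$ of $h$, each term carrying one extra increment of $y$ --- either the visible $y^{u,\dots,u}_s-y^{m,\dots,m}_s$ or one produced one level down inside a nested convolution --- and the bounds of Proposition~\ref{convolution_init} together with the inductive estimates show that this increment is exactly what raises the $|t-s|$-exponent above $1$, at the cost of a heavier $|\tau-t|$-singularity, i.e.\ it yields $\beta>1$ with $\beta-\kappa-\theta=|h|\rho$. This $\delta$-computation --- reconciling the combinatorics of iterated cuts with the telescoping of the several Chen relations while tracking which convolution variables remain free --- is the step I expect to be the main obstacle; it is the analogue of the coherence checks that are invariably the crux in the branched rough path and regularity structures theories.

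Granting this, Lemma~\ref{Volterra Sewing Lemma} gives that $\mathcal{I}(\Xi^\tau)_{ts}=\lim_{|\mathcal{P}|\to0}\sum_{[u,v]\in\mathcal{P}}(\Xi^\tau)_{vu}$ exists for every $(s,t,\tau)\in\Delta_3$, is independent of the partition sequence, and coincides with $\mathbf{z}^{h,\tau}_{ts}\star y_s$; linearity in $y$ passes to the limit, so $\star$ is a well-defined operation of the claimed type. Finally, running the same argument with $y$ replaced by $y-y^{s,\dots,s}_s$ --- which has the same $|||\,\cdot\,|||$-norm and makes the pointwise contribution $\mathbf{z}^{h,\tau}_{ts}\otimes y^{s,\dots,s}_s$ drop out, so that the left-hand sides take the form $\mathbf{z}^{h,\tau}_{ts}\star(y^{\cdot,\dots,\cdot}_s-y^{s,\dots,s}_s)$ --- and combining the bounds on $\Xi$ with the two estimates of part~(ii) of Lemma~\ref{Volterra Sewing Lemma} applied to $\mathcal{I}(\Xi)-\Xi$, one obtains \eqref{ineq1} and \eqref{ineq2} with constant $\bigl(\sum_{((h))}\prod_{i=1}^k\|\mathbf{z}^{h^{(i)}}\|_{(|h^{(i)}|\rho+\gamma,\gamma)}\bigr)\,|||y|||_{(\alpha,\gamma),|h|}$. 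This closes the induction.
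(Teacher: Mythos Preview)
Your approach is essentially the paper's: induct on $|h|$, set up the germ $\Xi$, compute $\delta_m\Xi$ using the Chen relation for $\mathbf{z}^h$, the operator identity~\eqref{chen2}, and coassociativity, then apply the Volterra Sewing Lemma with two singularities. You also correctly identify the mechanism producing $\beta>1$ (the extra increment of $y$) and the target exponents $\beta-\kappa-\theta=|h|\rho$.

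One imprecision worth correcting: after the cancellations, $\delta_m\Xi$ is \emph{not} indexed by iterated cuts $((h))$. The paper shows that the two pieces combine to
\[
\delta_m(\Xi^\tau)_{vu}
= -\,\mathbf{z}^{h,\tau}_{vm}\otimes\bigl(y^{m,\dots,m}_s-y^{u,\dots,u}_s\bigr)
+\sum_{(h)'}\mathbf{z}^{h^{(1)},\tau}_{vm}\star\bigl(\mathbf{z}^{h^{(2)}}_{mu}\star(y^{u,\dots,u}_s-y_s)\bigr),
\]
i.e.\ only \emph{single} admissible cuts appear in the expression; the key intermediate step is the identity $\delta_m\bigl((\delta_u\mathbf{z}^{h,\tau}_{vs})\star y_s\bigr)=-\sum_{(h)'}\mathbf{z}^{h^{(1)}}_{vm}\star(\mathbf{z}^{h^{(2)}}_{mu}\star y_s)$, proved by expanding $\mathbf{z}^{h^{(1)}}_{vu}$ via Chen and cancelling against the expansion of $\mathbf{z}^{h^{(2)}}_{ms}\star y_s$ using coassociativity. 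The iterated cuts $((h))$ only enter at the next stage, when you bound each summand via the inductive inequality~\eqref{ineq1} for $h^{(1)}$: this produces $\sum_{((h^{(1)}))}\prod_i\|\mathbf{z}^{h^{(1i)}}\|$, which multiplied by $\|\mathbf{z}^{h^{(2)}}\|$ and summed over $(h)'$ gives the constant $\sum_{((h))}\prod_i\|\mathbf{z}^{h^{(i)}}\|$. So your ``indexed by $((h))$'' belongs in the estimate, not the formula for $\delta_m\Xi$ itself. Once you track this correctly the rest of your argument goes through as written.
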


\begin{proof}

We induct on the number of vertices of the tree $h$. The base case of $h \in \mathcal{T_{1}}$ is covered by Proposition~\ref{convolution_init}. We now assume that the statement holds for all $h \in \mathcal{T}_{p-1}$ and proceed to prove it is true for all $h \in \mathcal{T}_{p}$.

\vspace{10pt}

\emph{Step 1:} Let us denote by $\mathcal{I}_{\mathcal{P}}$ the approximation of the right hand side above, that is 
\begin{equation} \label{Riemann sums2}
\mathcal{I}_{\mathcal{P}}:= \sum_{\left[u,v \right]\in\mathcal{P}} (\varXi_s)_{vu}^{\tau}:=\sum_{\left[u,v\right]\in\mathcal{P}} \mathbf{z}_{vu}^{h,\tau}\otimes y^{u}_{s}+(\delta_{u} \mathbf{z}_{vs}^{h,\tau}) \star y_{s}^{\cdot, ..., \cdot}
\end{equation}
 where $ y^{u}_s $ is a short hand notation for $ y^{u,...,u}_s $.
 Our goal is to apply Lemma~\ref{Volterra Sewing Lemma} to the increment $\varXi$. We  must therefore check the regularity of the integrand under the action of $\delta$. To this aim, two simple computations using that $\delta_{r} \mathbf{z}_{vu}^{h,\tau}= \sum_{(h)'} \mathbf{z}^{h^{(1)}}_{vr} \star  \mathbf{z}^{h^{(2)}}_{ru}$ reveal 
\begin{align}\label{delta rule}
\delta_{r} (\mathbf{z}_{vu}^{h,\tau}\otimes y^{u}_{s}) = -\mathbf{z}_{vr}^{h,\tau}\otimes (y^{r}_{s}-y^{u}_{s}) + \sum_{(h)'} \mathbf{z}^{h^{(1)}}_{vr} \star  \mathbf{z}^{h^{(2)}}_{ru} \otimes y^{u, ..., u}_{s} ,
\\\label{delta two rule}
\delta_{r}((\delta_{u} \mathbf{z}_{vs}^{h,\tau}) \star y_{s}) = - \sum_{(h)'} \mathbf{z}^{h^{(1)}}_{vr}  \star \left( \mathbf{z}^{h^{(2)}}_{ru} \star y_{s} \right), 
\end{align}
To prove \eqref{delta two rule} we first expand the left hand side of the identity using the definition of the operator $\delta$ and get
\begin{equs}
\delta_{r}((\delta_{u} \mathbf{z}_{vs}^{h,\tau}) \star y^{\cdot}_{s}) &  = \sum_{(h)'}  (\mathbf{z}^{h^{(1)}}_{vu} \star  (\mathbf{z}^{h^{(2)}}_{us} \star y_{s})) -  (\mathbf{z}^{h^{(1)}}_{vr} \star  (\mathbf{z}^{h^{(2)}}_{rs} \star y_{s}))
\\ & - (\mathbf{z}^{h^{(1)}}_{ru} \star  (\mathbf{z}^{h^{(2)}}_{us} \star y_{s}))
\end{equs}
We know by Chen's relation, that one has
\begin{equs}
\mathbf{z}^{h^{(1)}}_{vu} = \sum_{(h^{(1)})} \mathbf{z}^{h^{(11)}}_{vr} \star \mathbf{z}^{h^{(12)}}_{ru} 
\end{equs}
Therefore, 
\begin{equs}
\mathbf{z}^{h^{(1)}}_{vu} \star  (\mathbf{z}^{h^{(2)}}_{us} \star y_{s}) - \mathbf{z}^{h^{(1)}}_{ru} \star  (\mathbf{z}^{h^{(2)}}_{us} \star y_s) = \sum_{(h^{(1)}),h^{(11)} \neq \one} (\mathbf{z}^{h^{(11)}}_{vr} \star \mathbf{z}^{h^{(12)}}_{ru}) \star ( \mathbf{z}_{us}^{h^{(2)}} \star y_s)
\end{equs}
Similarly, one has
\begin{equs}
\mathbf{z}^{h^{(1)}}_{vr} \star  (\mathbf{z}^{h^{(2)}}_{rs} \star y_{s}) = \sum_{(h^{(2)})} \mathbf{z}^{h^{(1)}}_{vr} \star (( \mathbf{z}^{h^{(21)}}_{ru} \star \mathbf{z}^{h^{(22)}}_{us}) \star y_{s})
\end{equs}
 One can see that $ h^{(1)} \neq \one $ and $ h^{(2)} \neq \one $ which is not the case for the range of $ h^{(22)}$ in the sum above. Now, after making these substitutions in the original sum, by coassociativity of the coproduct and \eqref{chen2} we see that most of the terms possess a counterpart with identical forest-indices and opposite sign. These terms cancel out and we eventually obtain \eqref{delta two rule}.

\vspace{10pt}

\emph{Step 2:} Let us now analyse the regularity of the terms in \eqref{delta rule}-\eqref{delta two rule}, starting with the right hand side of \eqref{delta rule}. 
 Namely, we recall our assumption that $\mathbf{z}^h\in\mathcal{V}^{(|h|\rho+\gamma,\gamma)}$. We have also assumed that $|||y|||_{(\alpha,\gamma), |h|} < \infty$.  We can therefore multiply the bounds afforded by the finite norms of $\mathbf{z}^{h}$ and $y$ and use the fact that $u \leq r \leq v$ to obtain the following bound:
\begin{equs}\label{reg z2 con y}
| \mathbf{z}_{vr}^{h,\tau} \otimes (y^{r}_{s}-y^{u}_{s}) | & \lesssim |||y|||_{ (\alpha ,\gamma ), |h| } \|\mathbf{z}^{h}\|_{(|h|\rho+\gamma,\gamma),1}  \\ & \times |u-s|^{-\eta}|\tau-v|^{-\gamma}|v-u|^{|h|\rho+\gamma+\eta}, 
\end{equs}

 We then choose $\eta \in [0,1]$  such that  $|h|\rho+\gamma+\eta>1$, which is always possible since $\rho>0$.

\vspace{10pt}

\emph{Step 3:} In order to treat the remaining terms in (\ref{delta rule}) and (\ref{delta two rule}), observe that formula (\ref{Riemann sums2}) trivially yields (recall again that $y_s^{u}$ has to be considered as a constant  in the lower variable)
\begin{equation*}
\mathbf{z}^{h,\tau}_{ts}\star  y_{s}^{u}= \mathbf{z}^{h,\tau}_{ts}\otimes y_{s}^{u}.
\end{equation*}
Therefore, we can gather our two remaining terms into 
\begin{equs}
\sum_{(h)} \mathbf{z}^{h^{(1)}}_{vr} \star  \mathbf{z}^{h^{(2)}}_{ru} \otimes y^{u, ..., u}_{s} - \sum_{(h)} \mathbf{z}^{h^{(1)}}_{vr} \star  \mathbf{z}^{h^{(2)}}_{ru} \star y_{s}
= \sum_{(h)} \mathbf{z}^{h^{(1)}}_{vr} \star  \mathbf{z}^{h^{(2)}}_{ru} \star (y^{u, ..., u}_{s} - y_{s})
\end{equs}

We introduce the following notation for $ y  $ and $ h_1 $ a subforest of $h$:
$\hat y_s^{h_1, r} $ which means that $y_s$ is evaluated at $r$ for all the variables corresponding to the nodes of $ h_1 $. The other variables are free. If the nodes of $ h_1 $ receive the evaluation $r_1,...,r_{|h_1|}$, we use the notation $ \hat y_s^{h_1,r_1,...,r_{|h_1|}} $.

Now, using the inequality in the statement of this theorem, which is assumed to hold for all $h \in \mathcal{F}_{p-1}$ by our inductive hypothesis (see Proposition~\ref{convolution_init} for the base case) and using that $|||y|||_{(\alpha, \gamma), |h^{(1)}|} < \infty$ we get
\begin{equs}
& |\mathbf{z}^{h^{(1)}, \tau}_{vr} \star  \mathbf{z}^{h^{(2)} \cdot, ..., \cdot}_{ru} \star (y^{h,u, ..., u}_{s} - y^{\cdot, ..., \cdot}_{s})|  \lesssim  \left( \sum_{ ((h^{(1)}))} \prod_{i=1}^k ||\mathbf{z}^{h^{(1i)}}||_{(|h^{(1i)}|\rho + \gamma, \gamma)} \right)
\\ 
& \times |||w|||_{(\alpha, \gamma), |h^{(1)}|} \cdot \left( |\tau - v|^{-\gamma} |v-r|^{|h|\rho + \gamma} \wedge |\tau - r|^{|h|\rho} \right)
\end{equs}
where $w_{ru}^{r_{1}, ..., r_{|h^{(1)}|}} = \mathbf{z}^{h^{(2)}, \tau, r_{1}, ..., r_{|h^{(1)}|}}_{ru} \star (y^{u, ..., u}_{s} - \hat y_s^{h^{(1)},r_1,...,r_{|h_1|}})$. Next, we want to bound $|||w|||_{(\alpha, \gamma), |h^{(1)}|}$. We have
\begin{equs}
|||w|||_{(\alpha, \gamma), |h^{(1)}|} \lesssim ||\mathbf{z}^{h^{(2)}}||_{(| h^{(2)} | \rho + \gamma, \gamma)}   ||y||_{(\alpha, \gamma),|h|} |v-u|^{\eta}|u-s|^{-\eta}
\end{equs}
where we have repeatedly applied the bound \ref{ineq2}, since in the expression for $w$ the factors comprising the term $\mathbf{z}^{h^{(2)}}$ are convolved with respect to different variables of $y^{u, ..., u}_{s} - \hat y^{h^{(1)}, r_{1}, ..., r_{|h^{(1)}|}}$. This yields a bound that involves the product of the norms of the factors comprising $\mathbf{z}^{h^{(2)}}$ which, if $\mathbf{z}^{h^{(2)}}$ is seen as an element of a tensor power of $\mathcal{V} ^{(\alpha, \gamma)}$, is equal to the factor $||\mathbf{z}^{h^{(2)}}||_{(| h^{(2)} | \rho + \gamma, \gamma)}$ appearing in the inequality above by virtue of Definition \ref{Volterra space}. Thus, combining the above estimates we get

\begin{equs}
|\mathbf{z}^{h^{(1)}, \tau}_{vr} \star  \mathbf{z}^{h^{(2)} \cdot}_{ru} \star (y^{u, ..., u}_{s} - y^{\cdot, ..., \cdot}_{s})| &  \lesssim \left( \sum_{ ((h))} \prod_{i=1}^k ||\mathbf{z}^{h^{(i)}}||_{(|h^{(i)}|\rho + \gamma, \gamma)} \right) ||y||_{(\alpha, \gamma),|h|} \\ & \times |\tau-v|^{-\gamma}|u-s|^{-\eta}|v-u|^{|h|\rho + \gamma + \eta}  
\end{equs}
where we have used that $|v-r| \leq |v-u|$ as well as the following identity:
\begin{equs}
 \sum_{(h)'} \left( \sum_{ ((h^{(1)}))} \prod_{i=1}^k ||\mathbf{\mathbf{z}}^{h^{(1i)}}||_{(|h^{(1i)}|\rho + \gamma, \gamma)} \right) ||\mathbf{\mathbf{z}}^{h^{(2)}}||_{(|h^{(2)}|\rho + \gamma, \gamma)} =  \sum_{ ((h))} \prod_{i=1}^k ||\mathbf{\mathbf{z}}^{h^{(i)}}||_{(|h^{(i)}|\rho + \gamma, \gamma)}.
\end{equs}
We notice that the regularity obtained in the last inequality above is the same as for (\ref{reg z2 con y}). Hence, choosing $\eta \in [0,1]$ as after (\ref{reg z2 con y}) and recalling (\ref{delta rule}) and  (\ref{delta two rule}), we obtain that 
\begin{equs} 
|\delta_{r}(\varXi_s)_{vu}^\tau|\lesssim c_{y,z}|\tau-v|^{-\gamma}|u-s|^{-\eta}|v-u|^{|h|\rho + \gamma + \eta}  
\end{equs}
where $\mu=|h|\rho+\gamma+\eta>1$ and where the constant $c_{y,z}$ in this inequality is the same as that in the right hand side of the last inequality above it.

Using this bound one can readily check that  $||\delta \Xi||_{(|h|\rho +\gamma + \eta, \gamma, \eta), 1} <\infty$. One can similarly check that $||\delta \Xi||_{(|h|\rho +\gamma + \eta, \gamma, \eta), 1, 2} < \infty$. Therefore, using the Volterra Sewing Lemma with two singularities (lemma \ref{Volterra Sewing Lemma}) we get that the Riemann sums defined by \eqref{Riemann sums2} converge as $|\mathcal{P}|\rightarrow 0$, and we define 
\begin{equs}\label{four sixty six}
\mathbf{z}_{ts}^{h,\tau}\star y_{s}:=\lim _{|\mathcal{P}|\rightarrow 0} \mathcal{I}_{\mathcal{P}}.
\end{equs}
We also immediately obtain the two bounds afforded to us by the Volterra Sewing Lemma, thus proving the inequalities \eqref{ineq1} and \eqref{ineq2} in the statement of the theorem. This completes the proof.
\end{proof}

\section{Rough Volterra Equations}

\ \ \ \ \ \ In this section, we introduce the Rough Path calculus for Volterra Equations. Our aim is to prove existence and uniqueness of solutions to Rough Volterra Equations with driving noise of arbitrarily low Hölder exponent. In order to formulate the main theorem (\ref{Existence and Uniqueness}), we will need to develop all the tools of classical Branched Rough Paths in a manner suited to treat the case of Volterra Equations.

 We begin this section by giving the definition of a Volterra-type Rough Path. We then go on to define the concept of a controlled Volterra Path. As in the classical Branched Rough Path setting, the latter will comprise the class of paths that can be integrated against a given Volterra-type Rough Path. 

\begin{definition} \label{Volterra_rough_path}

Fix $\alpha, \gamma \in (0,1)$ with $\alpha - \gamma > 0$ be fixed. Let $ (z_i)_{i \in \lbrace 0,...,d \rbrace} $ such that $ z_i \in \mathcal{V}^{(\alpha,\gamma)}(\Delta_2,\R) $. For $ n $ with $ (n+1)\varrho + \gamma > 1 $,   we suppose given a tree-indexed family of iterated integrals $ (\textbf{z}_{ts}^{h,\tau})_{|h| \leq n} $ indexed by the trees of $ \CT $ such that 
\begin{equs}
  \mathbf{z}^{e_i} = z_i,  \quad e_i = \begin{tikzpicture}[scale=0.2,baseline=0.1cm]
        \node at (0,0)  [dot,label= {[label distance=-0.2em]below: \scriptsize  } ] (root) {};
         \node at (0,2)  [dot,label={[label distance=-0.2em]left: \scriptsize  $ i $} ] (center) {};
      \draw[kernel1] (root) to
     node [sloped,below] {\small }     (center);
     \end{tikzpicture} 
\end{equs}
  and one has:
\begin{equs} \label{chen1}
\delta_{u} \textbf{z}^{h, \tau}_{ts} = \sum_{ (h)' } \textbf{z}^{h^{(1)},\tau}_{tu} \star  \textbf{z}^{h^{(2)}}_{us}
\end{equs}
where the Sweedler's notation corresponds to the reduced coproduct $ \tilde{\Delta} $.
Let $ h $ a tree with $ m \leq n  $ nodes. We suppose that for every $ y \in \mathcal{V}^{(\alpha,\gamma)}_{m}$, one has
\begin{equs}
\delta_{u} \textbf{z}^{h,\tau}_{ts} \star y_s = \sum_{(h)'} \left( \textbf{z}^{h^{(1)},\tau}_{tu} \star \left(  \textbf{z}^{h^{(2)}}_{us} \star  y_s \right) \right)
\end{equs}
where the product $ \star $ is defined inductively via Theorem~\ref{Convolution Product} and Proposition~\ref{convolution_init}. We also assume that $ \textbf{z}^h \in \mathcal{V}^{(|h| \varrho + \gamma, \gamma)} $. We then say that $\textbf{z}$ is a Volterra-type Rough Path. We define the norm $ {|||} \cdot {|||}_{(\alpha, \gamma)} $ as:
\begin{equs}
{|||} \textbf{z} {|||}_{(\alpha, \gamma)} = 
\sum_{h \in \CF_n}  \Vert \textbf{z}^h \Vert_{(|h| \rho + \gamma, \gamma)}.
\end{equs}
\end{definition}

\vspace{10pt}

We shall now introduce the concept of a controlled Volterra Path. Central to this is the concept of a Gubinelli derivative. These functions are basically the coefficients that appear when one has a local Taylor approximation to the initial path attached to every point. This type of local approximation property is stipulated to hold for the path and all of its derivatives, which are in turn assumed to possess the appropriate formally differentiated version of the initial Taylor expansion. In order to encode formal differentiation we make use of the $\Delta$ operation, exploiting the Hopf-algebraic structure that our forest algebra $\mathcal{H}$ was additionally equipped with.

\begin{definition}[Controlled Volterra Path] \label{CVRP}

Let $\mathbf{z}$ be an $\alpha$-Hölder Volterra-type Rough Path and let $n = \floor{1/\alpha}$. A Volterra Branched Rough Path controlled by $\mathbf{z}$ is a function $y = (y^{h})_{h \in \CT_{n-1}}$ such that, for every $h \in \CT_{n-1}$ we have $y^{h}\in \mathcal{V}^{(\alpha,\gamma)}_{|h| +1}(Q_{|h|+1}, \mathbf{R}^e)$ and the remainder terms, for every $ \tau \in [s,t] $,
 \begin{equs} \label{approximation_R}
R_{ts} = y_{ts} - \sum_{\rho \in \CF_{n-1}}  \, \mathbf{z}_{ts}^{\rho} \star y_{s}^{\rho} 
\end{equs}
satisfy $R \in \mathcal{V}^{(n\alpha, n\gamma)}_{1}$ where $ y_{ts}: =y^{h}_{ts}  $ for $ h $ being the empty tree and
\begin{equs} \label{approximation_R}
R^h_{ts} = y_{ts}^{h} - \sum_{\rho \in \CF_{n-1}} \sum_{ \sigma \in \CT_{n-1}} c(\sigma, h,\rho) \, \mathbf{z}_{ts}^{\rho} \star y_{s}^{\sigma} 
\end{equs}
satisfy $R^{h} \in \mathcal{V}^{((n-|h|)\alpha, (n-|h|)\gamma)}_{|h|+1}$. Here $c(\sigma, h, \rho)$ is the counting function for the number of appearances of the term $h \otimes \rho$ in the expansion of the reduced coproduct $\tilde{\Delta} \sigma$. The space of such functions is called the space of Controlled Volterra Paths. We equip it with the norm 
\begin{equs}
\Vert y \Vert_{\mathbf{z},(\alpha,\gamma)} =  \sum_{h \in \CT_{n-1}} \left(  |y^{h}_{0}| + ||| y^h |||_{(\alpha,\gamma), |h|}+ ||R^{h}||_{((n-|h|)\alpha, \ (n-|h|)\gamma)} \right)
\end{equs}
We shall use $\mathcal{D}_{\mathbf{z}}^{(\alpha, \gamma)}$ to denote the space of $(\alpha, \gamma)$-Hölder Volterra Paths controlled by $\mathbf{z}$. If $ y $ is zero on trees which are not planted and $ y^{h} $ is a function of $ |h| $ variables, none of them being associated with the root of $ h $, we denote this space as $\hat{\mathcal{D}}_{\mathbf{z}}^{(\alpha, \gamma)}$.
We define $\mathcal{D}_{\mathbf{z}, \mathbf{y}_0}^{(\alpha, \gamma)}$ as the following affine space:
\begin{equs}
\mathcal{D}_{\mathbf{z}, \mathbf{y}_0}^{(a, \gamma)} = \lbrace y \in \mathcal{D}_{\mathbf{z}}^{(\alpha, \gamma)}\,|\, \mathbf{y}_0  = (y_0^{h,\tau,...,\tau})_{h \in \CT_{n-1}}) = (y_0^{h})_{h \in \CT_{n-1}}) \rbrace
\end{equs}
 We define in the same way the space $\mathcal{D}_{\mathbf{z}, \mathbf{y}_0}^{(a, \gamma)}$ where $\mathcal{D}_{\mathbf{z}}^{(\alpha, \gamma)}$ is replaced by $\hat{\mathcal{D}}_{\mathbf{z}}^{(\alpha, \gamma)}$. 
\end{definition}

\begin{remark}

We shall consider a controlled path $Y$ as defined on the hypercube $Q_{n}$. It is possible to refine this domain of definition to a set of the form \ $\bigcap_{ord(h)} \{ t_{i} < t_{j} \} \subset Q_{|h| +1}$ where the order relations imposed on the components is given directly by the partial ordering induced by the tree $h$. We choose to work on $Q_{n}$ for the sake of simplicity.

\end{remark}

We will now show how one can integrate a controlled path against a given Volterra-type Rough Path. This will in essence be done by applying a formal "shift" on the Taylor-like approximation to the object $Y$. In the case of Terry Lyons's rough paths this is done by using the shift operator on the tensor algebra of words indexing the expansion. In our case the role of the shift operator is played by a "grafting" operator $\CI$ from the tensor algebra of forests to the tensor algebra of trees. We formulate the rough integration theorem below:
 
\begin{theorem}\label{Rough Integration}
Let $q \in \mathcal{C}^{\alpha}$ and $k$ be a Volterra kernel satisfying the analytic bounds \eqref{condition_k}.  Define $z^{i,\tau}_{t} = \int_{0}^{t} k(\tau,r)dq^i_{r}$ using Theorem~\ref{thm:Regularity of Volterra path} and assume there exists a Volterra-type Rough Path $\mathbf{z} $ of order $p$ built from  Definition~\ref{Volterra_rough_path} . Additionally, suppose that the $p$ components of $\mathbf{z}$ are uniformly bounded. We now consider a controlled Volterra path $(y^{h})_{h \in \CT_{p-1}} \in \mathcal{D}^{(\alpha, \gamma)}_{\mathbf{z}}$. Then, one has:
\begin{itemize}
\item[(i)] The following limit exists for all $(s,t, \tau) \in \Delta_{3}$,
\begin{equs} \label{integral_w}
w^{\tau}_{ts} = \int_{s}^{t} k(\tau,r) y^r_{r}dq^{i}_{r} := \lim_{|\mathcal{P}| \rightarrow 0} \sum_{[u,v] \in \mathcal{P}} \sum_{h \in \CT_{p-1}} \mathbf{z}^{\CI_i(h), \tau}_{vu} \star y^{h, \cdot}_{u}
\end{equs}
 where the tree $ \CI_{i}(\tau) $ is given by grafting the root of $\tau $ onto a new root with no decoration and then decorating with $i$ the node of the new tree corresponding to the root of $\tau $.
\item[(ii)] One has the following bounds for all $(s, t, p, q) \in \Delta_{4}$, $\eta \in [0,1]$ and $\zeta \in [0, p\rho]$ 
\begin{equs}\label{Rough Integration bound 1}
|w^{\tau}_{ts} - \Xi^{\tau}_{ts}| & \lesssim \| y \|_{\mathbf{z},\left(\alpha,\gamma\right)} ( 1 + |||\mathbf{z}|||_{(\alpha,\gamma)})^p \\ &  \left( [|\tau - t|^{-\gamma} |t-s|^{p \rho + \gamma}] \wedge |\tau - s|^{p \rho} \right) 
\end{equs}
\begin{equs}\label{Rough Integration bound 2}
|w^{qp}_{ts} - \Xi^{qp}_{ts}| & \lesssim  \| y \|_{\mathbf{z},\left(\alpha,\gamma\right)} (1+ |||\mathbf{z}|||_{(\alpha, \gamma)})^p \, |p-q| ^{\eta} |q-t| ^{- \eta + \zeta} \\ & \left( [|q-t| ^{-\gamma - \zeta} |t-s|^{p \rho + \gamma} ] \wedge |q-s|^{p  \rho - \zeta} \right)
\end{equs}
 where $ \Xi_{ts}^{\tau}  $ is given as
\begin{equs}
\Xi_{vu}^{\tau} =  \sum_{h \in \mathcal{T}_{p-1}} \mathbf{z}^{\CI_i(h), \tau}_{vu} \star y^{h, \cdot}_{u}
\end{equs}
where $ \ \mathbf{z}^{\CI_i(h), \tau}_{vu} \star y^{h, \cdot}_{u}$ is
understood according to Theorem~\ref{Convolution Product}. Indeed, $(y^{h})_{h \in \mathcal{T}_{p-1}} \in \mathcal{D}^{(\alpha,\gamma)}_{\mathbf{z}}$ implies that $y^{h}\in \mathcal{V}^{(\alpha,\gamma)}_{|h| +1}$.
\item[(iii)] The tuple $(w^{h})_{h \in \mathcal{T}_{p-1}}$ is a controlled Volterra path in $\mathcal{D}^{(\alpha, \gamma)}_{\mathbf{z}}$ where $w^{\CI_i(h)}_{t}= y^{h}_{t}$, $w^{h}_{t}= 0 $ for $ h $ not a planted tree and $w^{\CI_i(\one)}$ is the integral defined in $ \eqref{integral_w}$.
 This implies that $(w^{h})_{h \in \mathcal{T}_{p-1}}$ is in $\hat{\mathcal{D}}^{(\alpha, \gamma)}_{\mathbf{z}}$.   
\end{itemize}
\end{theorem}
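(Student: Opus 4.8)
The plan is to realise $w^{\tau}_{ts}$ as the output of the Volterra Sewing Lemma (Lemma~\ref{Volterra Sewing Lemma}, in the ``no dependency on $v$'' version already used in Theorem~\ref{thm:Regularity of Volterra path}) applied to the germ
\[
\Xi^{\tau}_{ts}\;:=\;\sum_{h\in\CT_{p-1}}\mathbf z^{\CI_i(h),\tau}_{ts}\star y^{h,\cdot}_{s}.
\]
Once we verify that $\Xi$ belongs to the abstract-integrand space $\mathcal V^{(\alpha,\gamma)(\beta,\kappa)}$ of Definition~\ref{new abstract integrnds space} with $\kappa=\gamma$ and $\beta=p\rho+\gamma$ — the requirement $\beta>1$ being exactly the order-$p$ hypothesis $p\rho+\gamma>1$ — part~(i) follows from the convergence statement (i) of the lemma, and the two inequalities \eqref{Rough Integration bound 1}--\eqref{Rough Integration bound 2} of part~(ii) are precisely the two bounds in item~(ii) of the lemma applied to $\mathcal I\Xi=w$, the constant being bounded by $\|y\|_{\mathbf z,(\alpha,\gamma)}(1+|||\mathbf z|||_{(\alpha,\gamma)})^{p}$ once the estimates below are tracked. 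Everything thus reduces to (a) pointwise bounds on $\Xi^{\tau}$ and on the two-parameter increment $\Xi^{\tau\tau'}$ at rate $\alpha$, and (b) a bound on $\delta_{u}\Xi$ at rate $\beta=p\rho+\gamma>1$.

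For (a) I would split $y^{h,\cdot}_{s}=\bigl(y^{h,\cdot}_{s}-y^{h,s,\dots,s}_{s}\bigr)+y^{h,s,\dots,s}_{s}$. The increment part is controlled termwise by the inequalities \eqref{ineq1}--\eqref{ineq2} of Theorem~\ref{Convolution Product} applied to $\CI_i(h)$, which is a tree of size $|h|+1$, so it contributes the exponent $(|h|+1)\rho+\gamma\ge\rho+\gamma=\alpha$; the leading term $h=\one$, for which $\mathbf z^{\CI_i(\one)}$ coincides with $z_{i}\in\mathcal V^{(\alpha,\gamma)}$, gives exactly rate $\alpha$. The constant part satisfies $\mathbf z^{\CI_i(h),\tau}_{ts}\star y^{h,s,\dots,s}_{s}=\mathbf z^{\CI_i(h),\tau}_{ts}\otimes y^{h,s}_{s}$, the lower variable being frozen, so the assumption $\mathbf z^{\CI_i(h)}\in\mathcal V^{((|h|+1)\rho+\gamma,\gamma)}\subset\mathcal V^{(\alpha,\gamma)}$ from Definition~\ref{Volterra_rough_path} applies directly. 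Summing over the finite set $\CT_{p-1}$ gives $\|\Xi\|_{(\alpha,\gamma),1}+\|\Xi\|_{(\alpha,\gamma),1,2}<\infty$.

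The heart of the argument is (b). I would expand $\delta_{u}\Xi^{\tau}_{ts}=\Xi^{\tau}_{ts}-\Xi^{\tau}_{tu}-\Xi^{\tau}_{us}$ by substituting the convolutional Chen relations \eqref{chen1}--\eqref{chen2} into each $\mathbf z^{\CI_i(h)}$ and the controlled expansions \eqref{approximation_R} for $y^{h}$ and all of its ``derivatives'' $y^{\sigma}$, which is where the counting function $c(\sigma,h,\rho)$ and the $\Delta$-action on $\CH$ enter. Using coassociativity of the plugging coproduct $\Delta$, the compatibility of the grafting operator $\CI_i$ with $\Delta$ and $\hat\Delta$ (mirroring the recursive identity \eqref{recursive_formula}), and the combinatorial meaning of $c$, the bulk of the resulting terms cancel in pairs with identical forest-indices and opposite sign, exactly as in Step~1 of the proof of Theorem~\ref{Convolution Product}. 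What survives is a finite sum of terms of the schematic form $\mathbf z^{h^{(1)},\tau}_{tu}\star\bigl(\mathbf z^{h^{(2)}}_{us}\star R^{\sigma,\cdot}_{us}\bigr)$ with $R^{\sigma}$ a controlled-path remainder from Definition~\ref{CVRP}; since each such remainder carries a positive Hölder exponent in $|u-s|$ which, combined with the order $|h^{(1)}|\rho+\gamma$ supplied by the $\mathbf z^{h^{(1)}}$-factor, makes the total order in $|t-s|$ exceed $1$, one estimates these iterated convolutions by \eqref{ineq2} of Theorem~\ref{Convolution Product} exactly as in Step~3 of its proof and obtains $\|\delta\Xi\|_{(\beta,\kappa),1}+\|\delta\Xi\|_{(\beta,\kappa),1,2}<\infty$ with $\beta=p\rho+\gamma>1$. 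Together with (a) this places $\Xi$ in the domain of Lemma~\ref{Volterra Sewing Lemma} and proves (i) and (ii). I expect this $\delta_{u}\Xi$ identity to be the main obstacle: it rests on the precise Hopf-algebraic compatibility between $\Delta$, the identification of the nodes of $h^{(1)},h^{(2)}$ with those of $h$, the grafting $\CI_i$, and $c$, and the delicate point is to keep track of which variables the successive $\star$-convolutions act on after these identifications, so that all non-telescoping and non-remainder contributions cancel.

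For (iii) I would set $w^{\CI_i(h)}=y^{h}$ for $h\neq\one$, $w^{\CI_i(\one)}=w$, and $w^{g}=0$ for $g$ not a planted tree, and verify the defining estimates of Definition~\ref{CVRP}. Membership $w^{g}\in\mathcal V^{(\alpha,\gamma)}_{|g|+1}$ for $g=\CI_i(h)$ follows, after relabelling the variables so that none corresponds to the root, from $y^{h}\in\mathcal V^{(\alpha,\gamma)}_{|h|+1}$, and it is exactly this absence of a root variable that places the limiting object in $\hat{\mathcal D}^{(\alpha,\gamma)}_{\mathbf z}$ rather than merely in $\mathcal D^{(\alpha,\gamma)}_{\mathbf z}$; for $g=\CI_i(\one)$ the regularity of $w$ is what Lemma~\ref{Volterra Sewing Lemma} already provides. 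The remainder identities \eqref{approximation_R} for $(w^{g})$ are then checked case by case: after discarding the vanishing components $w^{g}$, the top-level remainder $w_{ts}-\sum_{\rho}\mathbf z^{\rho}_{ts}\star w^{\rho}_{s}$ reduces to $w_{ts}-\Xi^{\tau}_{ts}$, which is the estimate of part~(ii); for $g=\CI_i(h)$ with $h\neq\one$ the compatibility of $\Delta$ with $\CI_i$ reduces the corresponding remainder to the controlled-path remainder $R^{h}$ of $y$ itself; and for non-planted $g$ both sides vanish identically. Hence $(w^{h})_{h\in\CT_{p-1}}\in\hat{\mathcal D}^{(\alpha,\gamma)}_{\mathbf z}$.
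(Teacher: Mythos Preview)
Your approach is essentially the paper's: apply the Volterra Sewing Lemma to the germ $\Xi^{\tau}_{ts}=\sum_{h}\mathbf z^{\CI_i(h),\tau}_{ts}\star y^{h,\cdot}_{s}$, compute $\delta_{m}\Xi$ via the convolutional Chen relation \eqref{chen2} and the controlled-path expansions \eqref{approximation_R}, exhibit the cancellations, bound the survivors by Theorem~\ref{Convolution Product}, and read off (iii) from the resulting estimate.

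The one place where your sketch is looser than the paper is the form of what survives in $\delta_{m}\Xi$. You anticipate nested terms of the schematic shape $\mathbf z^{h^{(1)},\tau}_{tu}\star\bigl(\mathbf z^{h^{(2)}}_{us}\star R^{\sigma}_{us}\bigr)$. In fact the cancellation is cleaner: the paper proves the combinatorial identity
\[
\sum_{h\in\CT_{p-1}}\sum_{(\CI_i(h))'}h^{(1)}\otimes h^{(2)}\otimes h\;-\;\sum_{\rho\in\CF_{p-1}}\sum_{h,\sigma\in\CT_{p-1}}c(\sigma,h,\rho)\,\CI_i(h)\otimes\rho\otimes\sigma\;=\;\sum_{h\in\CT_{p-1}\setminus\CT_{p-2}}\CI_i(h)\otimes h,
\]
which says that after substitution and matching, \emph{all} nested convolutions cancel and only single convolutions remain:
\[
\delta_{m}\Xi^{\tau}_{vu}\;=\;\sum_{|h|=p-1}\mathbf z^{\CI_i(h),\tau}_{vm}\star y^{h}_{mu}\;-\;\sum_{h\in\CT_{p-1}}\mathbf z^{\CI_i(h),\tau}_{vm}\star R^{h}_{mu}.
\]
Both pieces are then bounded directly by \eqref{ineq1}--\eqref{ineq2} at total order $(p+1)\rho+\gamma>1$. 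Your more pessimistic schematic would still go through, but the actual identity is sharper and makes the estimate immediate; it is worth isolating explicitly rather than leaving it inside the phrase ``cancel in pairs''.
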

\begin{proof}
 Our first step is to invoke the Volterra Sewing Lemma~\ref{Volterra Sewing Lemma}
in order to define 
\begin{equation}\nonumber
w_{ts}^\tau=\int_{s}^{t}k\left(\tau,r\right)y_{r}^{r}dq^i_{r}=\mathcal{I}\left(\Xi \right)_{ts}.
\end{equation}
To this aim, similarly to the proof of Theorem~\ref{Convolution Product},
we need to check that $\delta \Xi$ is sufficiently regular. This is what we proceed to do below.
    Combining \eqref{301} and \eqref{chen2}, we get the following relation for $(u,m,v,\tau)\in \Delta_4$,
\begin{equs}
\delta_{m}\Xi_{vu}^{\tau} & = \sum_{h \in \mathcal{T}_{p-1}} \(  \mathbf{z}^{\CI_i(h), \tau}_{vu} \star y^{h, \cdot}_{u} -  \mathbf{z}^{\CI_i(h), \tau}_{vm} \star y^{h, \cdot}_{m}  -  \mathbf{z}^{\CI_i(h), \tau}_{mu} \star y^{h, \cdot}_{u} \) \\
& = \sum_{h \in \mathcal{T}_{p-1}} \delta_{m}\mathbf{z}^{\CI_i(h), \tau}_{vu} \star y^{h, \cdot}_{u} -\mathbf{z}^{\CI_i(h), \tau}_{vm} \star y^{h, \cdot}_{mu} \\
& = \sum_{h \in \mathcal{T}_{p-1}}  \sum_{  (\CI_i(h))' } \mathbf{z}_{vm}^{h^{(1)}, \tau} \star \(  \mathbf{z}_{mu}^{h^{(2)}} \star y_{u}^{h, \cdot} \) - \mathbf{z}^{\CI_i(h), \tau}_{vm} \star y^{h, \cdot}_{mu} 
\end{equs}
Now, we resort to the fact that $y$ is a controlled Volterra Path and thus satisfies 
the local approximation property \eqref{approximation_R}. Substituting the appropriate expression in place of $y^{h}$ then allows us to write:
\begin{equs}
\mathbf{z}^{\CI_i(h), \tau}_{vm} \star y^{h, \cdot}_{mu}  =  \sum_{\rho \in \CF_{p-1}}\sum_{\sigma \in \mathcal{T}_{p-1}} c(\sigma, h, \rho) \, \mathbf{z}^{\CI_i(h), \tau}_{vm} \star ( \mathbf{z}_{mu}^{\rho} \star y_{u}^{\sigma} ) + \mathbf{z}^{\CI_i(h), \tau}_{vm} \star R^{h}_{mu}
\end{equs}
Plugging this into the expression we have obtained for $\delta \Xi$ we get
\begin{equs}
\delta_{m} \Xi_{vu}^{\tau} & = \sum_{h \in \mathcal{T}_{p-1}}  \( \sum_{  (\CI_i(h))'} \mathbf{z}_{vm}^{h^{(1)}, \tau} \star  \mathbf{z}_{mu}^{h^{(2)}, \cdot} \) \star y_{u}^{h, \cdot} 
\\ & 
- \sum_{\rho \in \CF_{p-1}} \sum_{h, \sigma \in \mathcal{T}_{p-1}} \ c(\sigma, h, \rho) \, \mathbf{z}^{\CI_i(h), \tau}_{vm} \star ( \mathbf{z}_{mu}^{\rho} \star y_{u}^{\sigma} ) -
\sum_{h \in \mathcal{T}_{p-1}} \mathbf{z}^{\CI_i(h), \tau}_{vm} \star R^{h, \cdot}_{mu} 
\end{equs}
where $ h^{(j)} $ is a short hand notation for $ (\mathcal{I}_i(h))^{(j)} $. By using the fact that
\begin{equs}
\sum_{h \in \mathcal{T}_{p-1}}   & \sum_{  (\CI_i(h))'} h^{(1)} \otimes  h^{(2)} \otimes h - \sum_{\rho \in \CF_{p-1}} \sum_{h, \sigma \in \mathcal{T}_{p-1}} \ c(\sigma, h, \rho) \, \CI_i(h) \otimes  \rho \otimes \sigma 
\\ & = \sum_{h \in \mathcal{T}_{p-1} \setminus \mathcal{T}_{p-2}} \CI_i(h) \otimes h
\end{equs}
we get
\begin{equs}
\delta_{m} \Xi_{vu}^{\tau} = \sum_{h \in \mathcal{T}_{p-1} \setminus \mathcal{T}_{p-2}} \mathbf{z}^{\CI_i(h), \tau}_{vm} \star y_{mu}^{h} - \sum_{h \in \mathcal{T}_{p-1}} \mathbf{z}^{\CI_i(h), \tau}_{vm} \star R^{h}_{mu} 
\end{equs}
Using this expression, we can now analyze the regularity of $\delta\Xi^\tau$. Indeed,  invoking Theorem \ref{Convolution Product}  we get for all $h \in \mathcal{T}_{p-1} \setminus \mathcal{T}_{p-2}$ the following inequalities:
\begin{equs} 
|\mathbf{z}_{vm}^{\CI_i(h),\tau} \star y_{mu}^{h}| & \lesssim \| y^{h} \|_{\left((p-1)\rho+\gamma ,\gamma \right)}
(1+{|||} \mathbf{z} {|||}_{\left(\alpha,\gamma\right)})^{p}|u-m|^{\rho}|\tau-v|^{-\gamma}|v-m|^{p \rho+\gamma} \\ 
| \mathbf{z}_{vm}^{\CI_i(h),\tau} \star R_{mu}^{h}| & \lesssim \| R^{h}\|_{\left((p-|h|)\rho +\gamma,\gamma\right)} (1 +{|||}\mathbf{z} {|||}_{\left(\alpha,\gamma\right)})^p \\ & |\tau-v|^{-\gamma}|v-m|^{(|h|+1)\rho+\gamma}|u-m|^{(p-|h|)\rho}.
\end{equs}
 Recalling that $\tau>v>m>u$, we thus obtain that 
\begin{equs}\label{a ineq}
|\delta_{m}\Xi^\tau_{vu}|\lesssim C_{y,z} \cdot |\tau-v|^{-\gamma}|v-u|^{(p+1)\rho+\gamma}. 
\end{equs}
 Since $(p+1) \cdot \rho+\gamma>1$, we can apply the Volterra Sewing Lemma~\ref{Volterra Sewing Lemma} to define $w_{ts}^\tau:=  \mathcal{I}(\Xi^\tau)_{ts}$ and at the same time obtain the bounds needed for part (ii) of the theorem. Finally, for part (iii), the bound \eqref{Rough Integration bound 1} guarantees that $ w_{ts}^{\tau} $ has the correct Taylor expansion and is a controlled Volterra rough path with $w^{\CI_i(h)}_{t}= y^{h}_{t}$. This completes the proof.
\end{proof}
\ \ \ \ \ The next step is to show how we can lift the composition of a controlled rough path $y$ with a sufficiently regular function $f$ in the space  $\mathcal{D}^{(\alpha, \gamma)}_{\mathbf{z}}$ of controlled Volterra Paths. This will finally allow us to formulate our equation abstractly in the Rough Path sense.

The following theorem shows that if $f$ is a sufficiently smooth function and $y$ is a controlled path of Volterra-type, then $f(y)$ may also be seen as a controlled path. It also gives us the form of the higher-order Gubinelli derivatives of $f(y)$. Its role is analogous to that of the chain rule in differential calculus. 

\begin{proposition}\label{Chain rule}
Let  $f \in \mathcal{C}_{b}^{p}\left( \R^e\right)$ and assume $(y^{h})_{h \in \CT_{p-1}}\in\hat{\mathcal{D}}_{\mathbf{z}}^{\left(\alpha,\gamma\right)}(\R^{e})$. 
Then the composition $(f(y)^{h})_{h \in \CT_{p-1}}$ is a controlled Volterra Path in $\mathcal{D}_{\mathbf{z}}^{\left(\alpha,\gamma\right)}\left( \R^{e} \right)$ and its Gubinelli derivatives are given for $ h = \prod_{i=1}^m h_i  \in \CT_{p-1} $  by
\begin{equs}
f(y)^{h,\tau}_{s} =   \frac{\prod_{i=1}^m S(h_i)}{m! \, S(h)} D^{m}f(y_{s}^{\tau}) \, y_{s}^{h_{1}} \otimes ... \otimes y_{s}^{h_{m}},
\end{equs}
where $ S(h) $ is the symmetry factor associated to $ h $ and $ \prod_{i=1}^m h_i $ is the tree product of the planted trees $ h_i $.
Moreover, one has
\begin{equs}\label{bound for function composed with controlled path}
& \|  f(y) \|_{\mathbf{z},(\alpha,\gamma)} \lesssim \left(1+ {|||} \mathbf{z}{|||}_{(\alpha,\gamma)}\right)^{p-1} \times   \\ &  \left[( \sum_{h \in \mathcal{T}_{p-1}} |y^h_{0}|+\|y\|_{\mathbf{z},(\alpha,\gamma)})\vee \left( \sum_{h \in \mathcal{T}_{p-1}} |y^h_{0}|+\|y\|_{\mathbf{z},(\alpha,\gamma)}) \right)^{p-1}\right]. 
\end{equs}
\end{proposition}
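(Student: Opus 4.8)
The plan is to take the stated formula for $f(y)^{h,\tau}_{s}$ as the \emph{definition} of the candidate controlled path and then verify the three requirements of Definition~\ref{CVRP}: that each $f(y)^{h}$ lies in $\mathcal{V}^{(\alpha,\gamma)}_{|h|+1}$, that the associated remainders have the correct Volterra--Hölder regularity, and that the norm estimate holds. The engine throughout is a Taylor expansion of $f$ around the point $y^{\tau}_{s}$ combined with the local approximation property of $y$; the only genuinely new ingredient, compared with the rest of the paper, is a combinatorial identity relating the elementary differentials of $f\circ y$ to the reduced coproduct $\tilde\Delta$ through the counting function $c(\sigma,h,\rho)$.

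First I would check the membership $f(y)^{h}\in\mathcal{V}^{(\alpha,\gamma)}_{|h|+1}$. Writing $h=\prod_{i=1}^{m}h_{i}$, the factor $D^{m}f(y^{\tau}_{s})$ is the composition of the bounded smooth map $D^{m}f$ with the Volterra function $y^{\tau}_{s}$, which is of Volterra--Hölder regularity since $y\in\mathcal{D}^{(\alpha,\gamma)}_{\mathbf z}$; applying the mean value theorem to the increments $z^{\tau}_{st}$ and $z^{\tau\tau'}_{st}$ shows that composition with a $\mathcal{C}^{p}_{b}$ function preserves the Volterra--Hölder class, with norm controlled by $\|f\|_{\mathcal{C}^{p}_{b}}$ and $\|y\|_{\mathbf z,(\alpha,\gamma)}$. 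Tensoring this against the Gubinelli derivatives $y^{h_{i}}_{s}\in\mathcal{V}^{(\alpha,\gamma)}_{|h_{i}|+1}$, and using that the constituent norms $\|\cdot\|_{(\alpha,\gamma),n,k}$ test only one upper variable at a time, gives $f(y)^{h}\in\mathcal{V}^{(\alpha,\gamma)}_{|h|+1}$. This step is routine.

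The heart of the argument is the remainder estimate. With
\[
R^{f(y),h}_{ts}:=f(y)^{h}_{ts}-\sum_{\rho\in\CF_{p-1}}\sum_{\sigma\in\CT_{p-1}}c(\sigma,h,\rho)\,\mathbf z^{\rho}_{ts}\star f(y)^{\sigma}_{s},
\]
one must show $R^{f(y),h}\in\mathcal{V}^{((p-|h|)\alpha,(p-|h|)\gamma)}_{|h|+1}$. I would Taylor-expand $f$ (and, for $h\neq\one$, its derivatives) to order $p$ around $y^{\tau}_{s}$, substitute the controlled expansion $y_{ts}=\sum_{\rho}\mathbf z^{\rho}_{ts}\star y^{\rho}_{s}+R_{ts}$ into every factor, and multiply out. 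The resulting terms split into: (a) those in which every factor has the form $\mathbf z^{\rho_{i}}_{ts}\star y^{\rho_{i}}_{s}$ with $\sum_{i}|\rho_{i}|\le p-1$; and (b) everything else, namely Taylor remainders of order $\ge p$ together with terms containing at least one factor $R_{ts}$ or one $\mathbf z^{\rho}$ with $|\rho|\ge p$. For (a) one uses $\mathbf z^{\rho_{1}}\otimes\cdots\otimes\mathbf z^{\rho_{k}}=\mathbf z^{\rho_{1}\cdots\rho_{k}}$ (Definition~\ref{def_forests_integral}) to recombine the $\mathbf z$-factors into a single forest-indexed object; the bookkeeping of ordered versus unordered products of planted trees, the multinomial coefficients coming from $(y_{ts})^{\otimes m}$, and the factors $\prod S(h_{i})/(m!\,S(h))$ are arranged exactly so that (a) reassembles into $\sum_{\rho\in\CF_{p-1}}\sum_{\sigma}c(\sigma,h,\rho)\,\mathbf z^{\rho}_{ts}\star f(y)^{\sigma}_{s}$ --- the key point being that $c(\cdot,h,\cdot)$, read off from $\tilde\Delta$, counts precisely the ways a $\mathbf z^{\rho}$-prefactor can be split off from the expansion of the $\sigma$-derivative, which is built into the plugging coproduct. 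The terms in (b) are estimated directly using the two inequalities of Theorem~\ref{Convolution Product}, the assumed uniform boundedness of the components of $\mathbf z$, and the regularity $R\in\mathcal{V}^{(p\alpha,p\gamma)}_{1}$, $R^{h}\in\mathcal{V}^{((p-|h|)\alpha,(p-|h|)\gamma)}_{|h|+1}$ of the remainders of $y$; each contributes to $R^{f(y),h}$ a quantity of order $|\tau-t|^{-(p-|h|)\gamma}|t-s|^{(p-|h|)\alpha}\wedge|\tau-s|^{(p-|h|)\rho}$, and the cross-increment ($\tau\tau'$) bounds follow in the same way from the second inequality of Theorem~\ref{Convolution Product}.

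Finally, the norm bound \eqref{bound for function composed with controlled path} is assembled from the estimates above by tracking the two relevant quantities: every $\mathbf z^{\rho}$ that occurs carries $|\rho|\le p-1$ factors and so contributes at most $(1+{|||}\mathbf z{|||}_{(\alpha,\gamma)})^{p-1}$, while each appearance of $D^{m}f(y^{\tau}_{s})$ contributes $\|f\|_{\mathcal{C}^{p}_{b}}$ and a product of $m\le p-1$ Gubinelli derivatives $y^{h_{i}}$, producing the polynomial dependence of degree at most $p-1$ on $\sum_{h}|y^{h}_{0}|+\|y\|_{\mathbf z,(\alpha,\gamma)}$, with the $m=0,1$ contributions accounting for the linear term inside the maximum. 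The main obstacle is the combinatorial matching in the preceding paragraph --- keeping exact track of the symmetry factors, the multinomial coefficients and the role of $c(\sigma,h,\rho)$ so that the principal part of the expanded Taylor series is precisely of the form prescribed for a controlled Volterra path; once that identity is in hand, the analytic part is a direct application of Theorem~\ref{Convolution Product} and the Volterra Sewing Lemma~\ref{Volterra Sewing Lemma}. One should also note in passing that $f(y)$ is generally only an element of $\mathcal{D}^{(\alpha,\gamma)}_{\mathbf z}$ and not of $\hat{\mathcal D}^{(\alpha,\gamma)}_{\mathbf z}$, since $f(y)^{h}$ is typically nonzero on non-planted trees $h$.
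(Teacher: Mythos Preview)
Your proposal is correct and follows essentially the same strategy as the paper: Taylor-expand $f$ (and its derivatives) around $y^{\tau}_{s}$, substitute the controlled-path expansion \eqref{approximation_R} for each factor of $y_{ts}$, separate the terms of total degree $<p$ that reassemble into the controlled-path expansion of $f(y)$ from the higher-order and remainder terms, and bound the latter using the estimates of Theorem~\ref{Convolution Product}. The paper carries this out by first writing \eqref{expansion_f} explicitly for $h=\one$ with its remainder $\tilde R$, and then for general $h$ uses the algebraic product-rule decomposition of $f(y)^{h,\tau}_{t}-f(y)^{h,\tau}_{s}$ before plugging in \eqref{expansion_y} and \eqref{expansion_f}; your version treats all $h$ at once and is more explicit about why the symmetry factors and $c(\sigma,h,\rho)$ make the combinatorics close up, which the paper leaves to the reader. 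One small point: the Volterra Sewing Lemma is not invoked directly here---the analytic estimates come from Theorem~\ref{Convolution Product} alone.
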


\begin{proof}
From \eqref{approximation_R}, we get
\begin{equs} \label{expansion_y}
 y_{ts}^{h} =  \sum_{\rho \in \CF_{p-1}} \sum_{\sigma \in \mathcal{T}_{p-1}} c(\sigma, h, \rho) \, \mathbf{z}_{ts}^{\rho, } \star y_{s}^{\sigma} + R^{h}_{ts}
\end{equs}
Using Taylor's theorem, we obtain:
\begin{equs} \label{expansion_f}
f(y^{\tau}_{t}) - f(y^{\tau}_{s}) & =  \sum_{ h =  \prod_{i=1}^m h_i \in \CT_{p-1}} \frac{\prod_{i=1}^m S(h_i)}{S(h)}\frac{D^{m} f(y^{\tau}_{s})}{m! } \prod_{i=1}^m \mathbf{z}_{ts}^{h_i, \tau} \star y_{s}^{h_i}   \\ & + \tilde{R}^{\tau}_{ts} 
\end{equs}
where the remainder $ \tilde{R}_{ts} $ is given by
\begin{equs}
& \tilde{R}^{\tau}_{ts}  =  \sum_{m= 1}^{p-1} \frac{D^{m}f(y^{\tau}_{s})}{m!}  \left( \sum_{\underset{ r < m }{ h = \prod_{i=1}^r h_i \in \CT_{p-1}}}  \frac{\prod_{i=1}^r S(h_i)}{S(h)}{m \choose m-r}(R_{ts}^{\tau})^{m-r} \prod_{i=1}^r \mathbf{z}_{ts}^{h_i, \tau} \star y_{s}^{h_i} \right. \\ & + \left. \sum_{\underset{ |h| \geq p }{ h = \prod_{i=1}^m h_i\in \CT_{p-1}}} \frac{\prod_{i=1}^m S(h_i)}{S(h)} \prod_{i=1}^m \mathbf{z}_{ts}^{h_i, \tau} \star y_{s}^{h_i} \right) \\ & + \frac{(y^{\tau}_{ts})^{m+1}}{m!}\int_0^1 (1-\theta)^m f^{(m+1)}(\theta y^{\tau}_t + (1 - \theta) y_{s}^{\tau} )) d \theta.
\end{equs}
By bounding each term of the previous sum, one can easily check that $  R \in \mathcal{V}^{(p \alpha, p \gamma)} $. Let $ h = \prod_{i=1}^m h_i \in \mathcal{T}_{p-1} $, then one has
\begin{equs}
& f(y)^{h,\tau}_{t} - f(y)^{h,\tau}_{s} = \frac{\prod_{i=1}^m S(h_i)}{m! \, S(h)} \frac{1}{m!} \left( D^{m}f(y_{t}^{\tau}) -D^{m}f(y_{s}^{\tau}) \right) \, y_{s}^{h_{1}} \otimes ... \otimes y_{s}^{h_{m}} + \\ & \frac{\prod_{i=1}^m S(h_i)}{m! \, S(h)} \sum_{i=1}^m \frac{1}{m!}  D^{m}f(y_{t}^{\tau})  \, y_{t}^{h_{1}} \otimes ... \otimes \left( y^{h_i}_t - y^{h_i}_s \right)  \otimes ... \otimes y_{s}^{h_{m}}
\end{equs}
Then, one has to plug the expansions \eqref{expansion_f} and \eqref{expansion_y} in order to conclude. This completes the proof.
\end{proof}
\begin{example}
We consider, as an example, the special case of Proposition~\ref{Chain rule} with $p=3$. Then, one has the following forests:
\begin{equs}
h_1 = \begin{tikzpicture}[scale=0.2,baseline=0.1cm]
        \node at (0,0)  [dot,label= {[label distance=-0.2em]below: \scriptsize  } ] (root) {};
         \node at (0,2)  [dot,label={[label distance=-0.2em]left: \scriptsize  $ i $} ] (center) {};
      \draw[kernel1] (root) to
     node [sloped,below] {\small }     (center);
     \end{tikzpicture},\quad h_2 =    \begin{tikzpicture}[scale=0.2,baseline=0.1cm]
        \node at (0,0)  [dot,label= {[label distance=-0.2em]below: \scriptsize  } ] (root) {};
         \node at (0,2)  [dot,label={[label distance=-0.2em]left: \scriptsize  $ j $} ] (center) {};
      \draw[kernel1] (root) to
     node [sloped,below] {\small }     (center);
     \end{tikzpicture}, \quad h_3 = \begin{tikzpicture}[scale=0.2,baseline=0.1cm]
        \node at (0,0)  [dot,label= {[label distance=-0.2em]below: \scriptsize } ] (root) {};
         \node at (-1,2)  [dot,label={[label distance=-0.2em]left: \scriptsize  $ i $} ] (left) {};
          \node at (1,2)  [dot,label={[label distance=-0.2em]right: \scriptsize  $ j $} ] (right) {};
          \draw[kernel1] (left) to
     node [sloped,below] {\small }     (root);
      \draw[kernel1] (right) to
     node [sloped,below] {\small }     (root);
     \end{tikzpicture} , \quad h_4 = 
\begin{tikzpicture}[scale=0.2,baseline=0.1cm]
        \node at (0,0)  [dot,label= {[label distance=-0.2em]below: \scriptsize  } ] (root) {};
         \node at (-1,2)  [dot,label={[label distance=-0.2em]left: \scriptsize  $ i $} ] (left) {};
          \node at (0,4)  [dot,label={[label distance=-0.2em]right: \scriptsize  $ j $} ] (leftr) {};
          \draw[kernel1] (left) to
     node [sloped,below] {\small }     (root);
      \draw[kernel1] (leftr) to
     node [sloped,below] {\small }     (left);
     \end{tikzpicture} . 
\end{equs}
 We then obtain the corresponding Gubinelli derivatives:
\begin{equs}
f(y)^{ h_1,\tau}_s  = f'(y^{\tau}_{s})\, y^{h_1}_s, \quad 
f(y)^{ h_3, \tau}_s  =  \frac{1}{2} f''(y^{\tau}_{s}) \, y^{h_1}_s \otimes y^{h_2}_s, \quad
f(y)^{ h_4, \tau}  = f'(y^{\tau}_{s})  y^{h_4}_s. 
\end{equs}
\end{example}

Before formulating our main theorem, we introduce some new spaces. Let $ 0 \leq a \leq b \leq T $, we consider 
\begin{equs}
\Delta^{T}_2([a,b]) = \lbrace (s,\tau) \in [a,b] \times [0,T] | a\leq s \leq \tau \leq T  \rbrace.
\end{equs}

\vspace{10pt}

We are now ready to formulate and prove the main theorem on existence and uniqueness of solutions to Rough Volterra Equations. We prove these properties by setting up the appropriate solution space and viewing the solution as a fixed point of an appropriately chosen mapping. We then establish that this map possesses the contraction property. This allows us to use the Banach fixed point theorem to obtain a unique fixed point, thus proving existence and uniqueness of solutions for the given equation.

\begin{theorem}\label{Existence and Uniqueness}

Let $q \in C^{\alpha}([0,T]; \R^{d})$ and let $k$ be a kernel of order $\gamma$ satisfying \ref{condition_k} with $p = \floor*{\frac{1}{\alpha - \gamma }}$. We define $\mathbf{z} \in \mathcal{V}^{(\alpha, \gamma)}$ by $z^{\tau}_{t} = \int k(\tau, r)dq_{r}$. Assume that $\mathbf{z}$ satisfies the same hypothesis as in Theorem \ref{Rough Integration} and suppose that $f_i \in \mathcal{C}^{p+1}_{b}(\R^e)$ for $ i \in \lbrace 0,...,d \rbrace $. Then, there exists a unique solution in $\hat{\mathcal{D}}_{\mathbf{z}}^{(\alpha, \gamma)}(\R^e)$ to the equation
\begin{equs}\label{RVE}
y_{t} = y^{\tau}_0 + \sum_{i=0}^{d} \int_{0}^{t}k(\tau, r)f_{i}(y^{r}_{r})dq^{i}_{r}  \ \ \ t \in [0,T], \ \ \ \ y_0 \in \R^e
\end{equs}
where the integrals above are understood as rough Volterra integrals in the sense of Theorem \ref{Rough Integration} .
\end{theorem}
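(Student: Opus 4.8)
The plan is to realise the solution of \eqref{RVE} as the unique fixed point of the \emph{solution map} $\mathcal{M}$ that sends a controlled Volterra path $y=(y^h)_{h\in\CT_{p-1}}$ to the right-hand side of \eqref{RVE}, that is, first composes $y$ with each $f_i$ and then integrates the result against $\mathbf{z}$, after which local solutions are glued to cover $[0,T]$. First I would check that $\mathcal{M}$ is a well-defined self-map of the affine space $\hat{\mathcal{D}}_{\mathbf{z},\mathbf{y}_0}^{(\alpha,\gamma)}(\R^e)$: by Proposition~\ref{Chain rule} each $f_i(y)$ is a controlled Volterra path in $\mathcal{D}_{\mathbf{z}}^{(\alpha,\gamma)}(\R^e)$, and then Theorem~\ref{Rough Integration}(iii) shows that its rough Volterra integral against $\mathbf{z}$ lies in $\hat{\mathcal{D}}_{\mathbf{z}}^{(\alpha,\gamma)}$, with $w^{\CI_i(h)}_t=f_i(y)^h_t$ and prescribed value $\mathbf{y}_0$ at $t=0$; summing over $i\in\{0,\dots,d\}$ and using linearity of both operations in $y$ keeps us in the same affine space. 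Since $\hat{\mathcal{D}}_{\mathbf{z},\mathbf{y}_0}^{(\alpha,\gamma)}$ is complete for the distance induced by $\|\cdot\|_{\mathbf{z},(\alpha,\gamma)}$, it remains to make $\mathcal{M}$ a contraction on a suitable subset, for which I would restrict attention to a short interval $[0,T_0]$, using the spaces $\Delta^{T}_2([0,T_0])$ introduced above to accommodate the fact that the upper variable $\tau$ of the kernel still ranges over all of $[0,T]$.

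The second step is to show that $\mathcal{M}$ maps a closed ball of fixed radius (around the image of the constant path $\mathbf{y}_0$) into itself when $T_0$ is small. Here I would combine the composition bound \eqref{bound for function composed with controlled path} of Proposition~\ref{Chain rule} with the integration bound \eqref{Rough Integration bound 1} of Theorem~\ref{Rough Integration}: since $f_i\in\mathcal{C}^{p+1}_b$, the constants in \eqref{bound for function composed with controlled path} are uniformly bounded in terms of $\|y\|_{\mathbf{z},(\alpha,\gamma)}$ alone, and the factor $|t-s|^{p\rho+\gamma}$ carried by the remainder estimates produces a positive power $T_0^{\theta}$ of the interval length; a routine bookkeeping argument then gives $\|\mathcal{M}(y)\|_{\mathbf{z},(\alpha,\gamma)}\lesssim(1+|||\mathbf{z}|||_{(\alpha,\gamma)})^p\big(|\mathbf{y}_0|+T_0^{\theta}P(\|y\|_{\mathbf{z},(\alpha,\gamma)})\big)$ for some polynomial $P$, whence the invariance.

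The third step, which I expect to be the main obstacle, is the contraction estimate $\|\mathcal{M}(y)-\mathcal{M}(\bar y)\|_{\mathbf{z},(\alpha,\gamma)}\le C\,T_0^{\theta}\,\|y-\bar y\|_{\mathbf{z},(\alpha,\gamma)}$ on the invariant ball. The crucial observation is that $y$ and $\bar y$ share the initial value $\mathbf{y}_0$, so $y-\bar y$ vanishes at $t=0$ on every component, and hence on $[0,T_0]$ all its pointwise values are bounded by a positive power of $T_0$ times $\|y-\bar y\|_{\mathbf{z},(\alpha,\gamma)}$; this is the source of the contractive gain. The technical difficulty is to propagate the difference $y-\bar y$ through the two nonlinear operations while controlling all the remainders $R^h$ and the nested $\star$-products that enter the definition of a controlled path. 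Concretely, one first proves a Lipschitz bound $\|f_i(y)-f_i(\bar y)\|_{\mathbf{z},(\alpha,\gamma)}\lesssim(1+|||\mathbf{z}|||_{(\alpha,\gamma)})^{p-1}C(\|y\|,\|\bar y\|)\|y-\bar y\|_{\mathbf{z},(\alpha,\gamma)}$, using that the Gubinelli derivatives of $f_i(y)$ in Proposition~\ref{Chain rule} are multilinear in the $y^{h_j}$ together with Taylor expansions of the $D^mf_i$ (the constant being uniform on the ball because $f_i\in\mathcal{C}^{p+1}_b$); then, since rough Volterra integration is linear in its integrand, applying \eqref{Rough Integration bound 1}--\eqref{Rough Integration bound 2} to the controlled path $f_i(y)-f_i(\bar y)$ supplies the factor $T_0^{\theta}$. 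Re-deriving the estimates of Theorem~\ref{Rough Integration} for a \emph{difference} of controlled paths is the technical heart of the argument, but it parallels the proof of that theorem and needs no new idea beyond careful bookkeeping. Banach's fixed point theorem then gives a unique solution on $[0,T_0]$ inside the ball, and a short additional argument removes the ball restriction to yield uniqueness in $\hat{\mathcal{D}}_{\mathbf{z},\mathbf{y}_0}^{(\alpha,\gamma)}$ over $[0,T_0]$.

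Finally, globalisation. All constants above depend on the $f_i$ only through their $\mathcal{C}^{p+1}_b$-norms and on $\mathbf{z}$ only through $|||\mathbf{z}|||_{(\alpha,\gamma)}$, so $T_0$ can be chosen independently of the starting time. I would then solve successively on $[0,T_0]$, $[T_0,2T_0]$, $\dots$; at the $k$-th stage the contribution of the already-constructed solution on $[0,kT_0]$ to $\int_0^t k(\tau,r)f_i(y^r_r)\,dq^i_r$, for $t\in[kT_0,(k+1)T_0]$ and $\tau$ ranging over $[0,T]$, is treated as a fixed inhomogeneous term — this is precisely why the spaces $\Delta^{T}_2([a,b])$ were introduced, and its regularity is controlled by Theorem~\ref{thm:Regularity of Volterra path} together with the bounds of Theorem~\ref{Rough Integration}. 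After finitely many steps $[0,T]$ is covered, and the local uniqueness statements glue along the overlaps to give uniqueness on $[0,T]$, completing the proof of existence and uniqueness of a solution to \eqref{RVE} in $\hat{\mathcal{D}}_{\mathbf{z}}^{(\alpha,\gamma)}(\R^e)$.
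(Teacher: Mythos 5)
Your proposal follows essentially the same route as the paper's proof: lift $y\mapsto f_i(y)$ via Proposition~\ref{Chain rule}, integrate via Theorem~\ref{Rough Integration}, show invariance of a ball and a contraction estimate for the resulting map on a short interval (with the Lipschitz bound on $f_i(y)-f_i(\tilde y)$ in the controlled-path norm as the technical core), and globalise by iterating with the terminal value as new initial data, the constant step size being guaranteed by $f_i\in\mathcal{C}^{p+1}_b$. The only cosmetic difference is the source of the small factor: the paper measures everything in a $(\beta,\gamma)$-norm with $\beta<\alpha$ to gain $\bar T^{\alpha-\beta}$, whereas you extract a power of $T_0$ from the excess exponents in the remainder bounds and from the matching of initial values; both are standard and interchangeable here.
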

\begin{proof}
We begin by defining:
\begin{equs}
(\Xi_i^{h})_{h \in \mathcal{T}_{p-1}} := (f_i(y)^{h})_{h \in \mathcal{T}_{p-1}} \in \mathcal{D}^{(p \rho + \gamma, \gamma)}_{\mathbf{z}}
\end{equs}
as given by Proposition~\ref{Chain rule}. Restricting to a subinterval $[0, T]$,  Theorem~\ref{Rough Integration} allows us to define the map
\begin{equs}
\mathcal{M}_{T}\left( (y^{h})_{h \in \mathcal{T}_{p-1}} \right)^{\tau}_t = ( \tilde{\Xi}^{h, \tau}_t)_{h \in \mathcal{T}_{p-1}}, \quad (t,\tau) \in \Delta^{T}_{2}([0, \bar{T}])
\end{equs}
where
\begin{equs}
\tilde{\Xi}^{\one,\tau}_t & = \tilde{\Xi}^{\tau}_t = y_0 + \sum_{i=0}^d \int^{t}_{0} k(\tau, r) \Xi^{r}_{i,r}dq^{i}_{r}, \\ \tilde{\Xi}^{\CI_{\ell}(h),\tau}_{t} & =  \Xi^{h, \tau}_{\ell,t}, \quad \tilde{\Xi}^{h, \tau}_t = 0, \, \, h \notin \mathcal{P}\mathcal{T}_{p-1}.
\end{equs}
The solution of our Rough Volterra Equation \eqref{RVE} on $[0,T]$ will be constructed as a fixed point of the map $\mathcal{M}_{T}$. We consider a parameter $ \beta $ such that $ \beta \leq \alpha $ and $ \beta - \gamma \geq \frac{1}{p-1} $.

\vspace{10pt}



\emph{Step 1.} We begin by defining the unit ball
\begin{equs}
\mathcal{B}_{\bar{T}} = \{ (y^{h})_{h \in \mathcal{T}_{p-1}} \in \hat{\mathcal{D}}^{(\beta, \gamma)}_{\mathbf{z}, y_{0}} (\Delta^{T}_{2}([0, \bar{T}]); \R^{m}) : ||(y^{h})_{h \in \mathcal{T}_{p-1}}||_{\mathbf{z}, (\beta, \gamma)} \leq 1 \}
\end{equs}
We will assume from now on that $|||\mathbf{z} |||_{(\alpha, \gamma), [0,T]} = M$. We consider 
\begin{equs}
\mathbf{w}^{\tau}_{ts} = (w^{h, \tau}_{ts})_{h \in \CT_{p-1}} = \mathcal{M}_{T}\left( (y^{h})_{h \in \mathcal{T}_{p}} \right)^{\tau}_{ts}
\end{equs}
 Then, by a simple extension of inequalities \ref{Rough Integration bound 1} and \ref{Rough Integration bound 2}  for $y \in \hat{\mathcal{D}}^{(\beta, \gamma)}_{\mathbf{z}, y_0}$ we have:
\begin{equs}
| w^{h, pq}_{ts} | & \leq \sum_{i=0}^d \sum_{g \in \mathcal{T}_{|h|-1}} |z^{\CI_i(g), pq}_{ts} \star f_i(y)^{g}_{s}| +
\\ & + ||f_i(y)||_{\mathbf{z}, (\beta, \gamma)} ( 1+  ||| \mathbf{z}|||_{(\beta, \gamma)})^{|h|} |p-q|^\eta |q-t|^{-\eta + \zeta} \\ & (|q-t|^{-\gamma - \zeta}|t-s|^{|h|\rho + \gamma} \wedge |q-s|^{|h|\rho - \zeta})
\end{equs}
Then, we use inequality \ref{ineq2}, take norms and sum over $h \in \mathcal{T}_{p-1}$. In this way, noting that $1 +  |||\mathbf{z}|||_{(\beta, \gamma)} \leq \lambda |||\mathbf{z}|||_{(\beta, \gamma)}$ and that $|||\mathbf{z}|||_{(\beta, \gamma)} = |||\mathbf{z}|||_{(\alpha, \gamma)} \bar{T}^{\alpha - \beta}$ we obtain that for some constant $C$ depending only on $M$, $\alpha$, $\gamma$ and $||f_i||_{\mathcal{C}^{p+1}_{b}}$ the following inequality holds:
\begin{equs}\label{bound1}
|| & \mathcal{M}_{\bar{T}}((y^{h})_{h \in \mathcal{T}_{p-1}})||_{\mathbf{z}, (\beta, \gamma)} \leq \sum_{h \in \mathcal{T}_{p-1}} \sum_{i=0}^{d} C|||\mathbf{z}|||_{(\alpha, \gamma)}(1 +  |||\mathbf{z}|||_{(\beta, \gamma)})^{|h|-1} \\ & |||f_i(y)^{h}|||_{(\beta, \gamma), |h|} \, \bar{T}^{\alpha - \beta} + C||(f_i(y)^{h}_{h \in \mathcal{T}_{p-1}}||_{\mathbf{z}, (\beta, \gamma)} \, |||\mathbf{z}|||_{(\alpha, \gamma)}( 1+  |||\mathbf{z}|||_{(\beta, \gamma)})^{|h|-1}  \, \bar{T}^{\alpha - \beta}
\end{equs}
Furthermore, by inequality \ref{bound for function composed with controlled path}, for any $h \in \mathcal{T}_{p-1}$ one has
\begin{equs}
 & |||f_i(y)^{h}|||_{(\beta, \gamma), |h|} \leq ||f_i(y)||_{\mathbf{z}, (\beta, \gamma)} \leq  C( 1+  |||\mathbf{z}|||_{(\beta, \gamma)})^{p-1} \times \\ & \left[ \left(\sum_{h \in \mathcal{T}_{p-1}} |y^h_{0}|+\|y\|_{\mathbf{z},(\beta,\gamma)} \right) \vee \left( \sum_{h \in \mathcal{T}_{p-1}} |y^h_{0}|+\|y\|_{\mathbf{z},(\beta,\gamma)}) \right)^{p-1}\right] \cdot \bar{T}^{\alpha - \beta}
\end{equs}

Therefore, under the additional asssumption that $y \in \mathcal{B}_{\bar{T}}$ we obtain a bound of the form
\begin{equs}
|| \mathcal{M}_{\bar{T}}((y^{h})_{h \in \mathcal{T}_{p-1}})||_{\mathbf{z}, (\beta, \gamma)} \leq c \cdot \bar{T}^{\alpha - \beta}
\end{equs}
Hence, for $\bar{T}$ small enough, the ball $\mathcal{B}_{\bar{T}}$ is left invariant.

\vspace{10pt}

\emph{Step 2.} Next, we will prove that $\mathcal{M}_{\bar{T}}$ is a contraction
on $\hat{\mathcal{D}}_{\mathbf{z}}^{(\alpha,\gamma)}$. To this aim, we set $F_i = f_i(y) - f_i(\tilde{y})$ and consider the controlled path $\mathbf{F} = (F^{h})_{h \in \mathcal{T}_{p-1}} \in \mathcal{D}^{(\beta, \gamma)}_{\mathbf{z}}$. Now, using inequality \eqref{bound1} which was proved in the previous step, we have
\begin{equs}
 & ||  \mathcal{M}_{\bar{T}}((y^{h})_{h \in \mathcal{T}_{p-1}})  -  \mathcal{M}_{\bar{T}}((\tilde{y}^{h})_{h \in \mathcal{T}_{p-1}}) ||_{\mathbf{z}, (\beta, \gamma)}  \leq \sum_{h \in \mathcal{T}_{p-1}} \sum_{i=0}^{d} C|||\mathbf{z}|||_{(\alpha, \gamma)}(1 +  |||\mathbf{z}|||_{(\beta, \gamma)})^{|h|-1} \\ & |||F_i^{h}|||_{(\beta, \gamma), |h|} \, \bar{T}^{\alpha - \beta} + C||(F_i^{h})_{h \in \mathcal{T}_{p-1}}||_{\mathbf{z}, (\beta, \gamma)} \, |||\mathbf{z}|||_{(\alpha, \gamma)}( 1+  |||\mathbf{z}|||_{(\beta, \gamma)})^{|h|-1}  \, \bar{T}^{\alpha - \beta}.
\end{equs}
Next, we will need to find a bound for $||(F^{h}_i)_{h \in \mathcal{T}_{p-1}}||_{\mathbf{z}, (\beta, \gamma)}$ with respect to $|| (y^{h} - \tilde{y}^{h})_{h \in \mathcal{T}_{p-1}} ||_{\mathbf{z}, (\beta, \gamma)}$. Recalling that $F_i = f_i(y) - f_i(\tilde{y})$ and keeping in mind the form of the Gubinelli derivatives using  the "chain rule" (Proposition \ref{Chain rule}) we proved earlier, we can obtain such a bound using an approach similar to that in \cite{Peter}. That is, taking into account that $y$ and $\tilde{y}$ are both in the ball $\mathcal{B}_{\bar{T}}$ and since we have assumed that $|||z|||_{\alpha, \gamma} \leq M$, one can check that there exists a constant $C' = C'_{M, \alpha, \gamma, ||f||_{\mathcal{C}^{p+1}_{b}}}$ such that
\begin{equs}
||(F^{h}_i)_{h \in \mathcal{T}_{p-1}}||_{\mathbf{z}, (\beta, \gamma)} \leq C' \, ||(y^{h} - \tilde{y}^{h})_{h \in \mathcal{T}_{p-1}}||_{\mathbf{z}, (\beta, \gamma)}
\end{equs}
Therefore, combining this inequality with the previous one we get that
\begin{equs}
|| \mathcal{M}_{\bar{T}} ( (y^{h})_{h \in \mathcal{T}_{p-1}} ) -\mathcal{M}_{\bar{T}}( (\tilde{y}^{h})_{h \in \mathcal{T}_{p-1}} ) ||_{\mathbf{z}, (\beta, \gamma)} \leq c' \,||(y^{h} - \tilde{y}^{h})_{h \in \mathcal{T}_{p-1}}||_{\mathbf{z}, (\beta, \gamma)} \, \bar{T}^{\alpha - \beta}
\end{equs}
Hence, picking $\bar{T}$ small enough, we get that $\theta := c' \bar{T}^{\alpha - \beta} < 1$. That is, for $\bar{T}$ small enough we have
\begin{equs}
|| \mathcal{M}_{\bar{T}} ( (y^{h})_{h \in \mathcal{T}_{p-1}} ) -\mathcal{M}_{\bar{T}}( (\tilde{y}^{h})_{h \in \mathcal{T}_{p-1}} ) ||_{\mathbf{z}, (\beta, \gamma)} \leq \theta \, ||(y^{h} - \tilde{y}^{h})_{h \in \mathcal{T}_{p-1}}||_{\mathbf{z}, (\beta, \gamma)}
\end{equs}
for some $\theta<1$ which establishes that $\mathcal{M}_{\bar{T}}$ is a contraction on $\hat{\mathcal{D}}^{(\beta, \gamma)}_{\mathbf{z}}(\Delta^{T}_{2}([0, \bar{T}]); \mathbf{R}^{e})$. This, together with the invariance of the ball for small enough $\bar{T}$ implies that $\mathcal{M}_{\bar{T}}$ has a unique fixed point in the ball $\mathcal{B}_{\bar{T}}$. It is clear that the fixed point inherits the regularity of the controlling noise $\mathbf{z}$. Therefore, it is also in $\hat{\mathcal{D}}^{(\alpha, \gamma)}_{\mathbf{z}}$. We conclude that this fixed point is the unique solution to the original equation in $\mathcal{B}_{\bar{T}}$. 

\vspace{10pt}

\emph{Step 3.} To finish the proof, we want to extend the solution to all of $\Delta_{2}$, 
 which we do by constructing a solution on a connected sequence of intervals of length $ \bar{T}$ covering $[0,T]$.
  We do this by constructing a solution to \eqref{RVE}
  on $\Delta_{2}^{T}\left([\bar{T},2\bar{T}]\right)$ using as a starting point $\tilde{y}_{0}$ the terminal value of the solution created on $\Delta_{2}^{T}([0,\bar{T}])$ and so on, thus constructing the solution iteratively on $\Delta_{2}^{T}\left([k\bar{T},(k+1)\bar{T}]\right)$. Since these solutions are connected on the boundaries we can then piece them together to obtain a global solution. Notice that the time step $\bar{T}$ can be made constant thanks to the fact that $f \in \mathcal{C}^{p+1}_{b}(\R^e)$.
We thus conclude that there exists a unique global  solution to \eqref{RVE} in the space $\hat{\mathcal{D}}_{\mathbf{z}}^{(\alpha,\gamma)}(\Delta_{2};\R^e)$.
\end{proof}


\end{document}